
\documentclass[oneside,12pt]{amsart}

\setlength{\vfuzz}{2mm} \setlength{\textwidth}{165mm}
\setlength{\textheight}{200mm} \setlength{\oddsidemargin}{0pt}
\setlength{\evensidemargin}{0pt}

\usepackage{amscd,amsmath,latexsym}
\usepackage[mathcal]{euscript}
\newtheorem{thm}{Theorem}[section]
\newtheorem{cor}[thm]{Corollary}
\newtheorem{lem}[thm]{Lemma}
\newtheorem{prop}[thm]{Proposition}

\theoremstyle{definition}
\newtheorem{defin}[thm]{Definition}
\theoremstyle{definition}
\newtheorem{exm}[thm]{Example}

\newtheorem{remark}[thm]{Remark}
\theoremstyle{conjecture}
\newtheorem{conjecture}[thm]{Conjecture}
\theoremstyle{remark}

\newcommand{\R}{{\mathbb R}}
\newcommand{\C}{{\mathbb C}}

\def\C{{\mathbb C}}
\def\R{{\mathbb R}}

\def\z2{{\bf Z}$_{2}$}
\def\zl2{{\bf Z}$_{(2)}$}

\def\ds{\displaystyle}
\begin{document}
\def\s{\sigma}

\title[The polyhedral product functor]{The polyhedral product functor: a method  of computation for
moment-angle complexes, arrangements and related spaces}

\author[A.~Bahri]{A.~Bahri}
\address{Department of Mathematics,
Rider University, Lawrenceville, NJ 08648, U.S.A.}
\email{bahri@rider.edu}

\author[M.~Bendersky]{M.~Bendersky}
\address{Department of Mathematics
CUNY,  East 695 Park Avenue New York, NY 10065, U.S.A.}
\email{mbenders@xena.hunter.cuny.edu}

\author[F.~R.~Cohen]{F.~R.~Cohen}
\address{Department of Mathematics,
University of Rochester, Rochester, NY 14625, U.S.A.}
\email{cohf@math.rochester.edu}

\author[S.~Gitler]{S.~Gitler}
\address{Department of Mathematics,
Cinvestav, San Pedro Zacatenco, Mexico, D.F. CP 07360 Apartado
Postal 14-740, Mexico} \email{sgitler@math.cinvestav.mx}

\subjclass{Primary:  13F55, 14F45, 32S22, 52C35, 55U10 Secondary:
16E05, 57R19}

\keywords{arrangements, cohomology ring, moment-angle complex,
simplicial complex, simplicial sets, stable splittings,
Stanley-Reisner ring, suspensions, toric varieties}

\begin{abstract}

This article gives a natural decomposition of the suspension of
generalized moment-angle complexes or {\it partial product spaces}
which arise as {\it polyhedral product functors } described below.

In the special case of the complements of certain subspace
arrangements, the geometrical decomposition implies the homological
decomposition in Goresky-MacPherson \cite{goresky.macpherson},
Hochster\cite{hochster}, Baskakov \cite{baskakov}, Panov
\cite{panov}, and Buchstaber-Panov \cite{buchstaber.panov}. Since
the splitting is geometric, an analogous homological decomposition
for a generalized moment-angle complex applies for any homology
theory. This decomposition gives an additive decomposition for the
Stanley-Reisner ring of a finite simplicial complex and
generalizations of certain homotopy theoretic results of Porter
\cite{porter} and Ganea \cite{ganea}.  The spirit of the work here
follows that of Denham-Suciu in \cite{denham.suciu}.

\end{abstract}

\maketitle

\section{Introduction}

Spaces which now are called {\em (generalized) moment-angle
complexes\/}, have been studied by topologists since the 1960's
thesis of G.~Porter \cite{porter}. In the 1970's E.~B.~Vinberg
\cite{vinberg} and in the late 1980's S.~Lopez~de~Medrano developed
some of their features \cite{Santiago}. In seminal work during the
early 1990's, M.~Davis and J.~Januszkiewicz introduced quasi-toric
manifolds, a topological generalization of projective toric
varieties which were being studied intensively by algebraic
geometers \cite{davis.jan}. They observed that every quasi-toric
manifold is the quotient of a moment-angle complex by the free
action of a real torus. The moment-angle complex is denoted
$Z(K;(D^2,S^1))$.

The integral cohomology of the spaces $Z(K;(D^2,S^1))$ has been
studied by Goresky-MacPherson \cite{goresky.macpherson}, Hochster
\cite{hochster}, Baskakov \cite{baskakov}, Panov \cite{panov},
Buchstaber-Panov \cite{buchstaber.panov} and Franz \cite{franz,
franz2}. Among others who have worked extensively on generalized
moment-angle complexes are Notbohm-Ray \cite{notbohm.ray},
Grbic-Theriault \cite{grbic.theriault}, Strickland \cite{strickland}
and Kamiyama-Tsukuda \cite{kamiyama.tsukuda}. The direction of this
paper is guided by the development in elegant work of Denham-Suciu
\cite{denham.suciu}.

Among the results given here is a natural decomposition for the
suspension of the generalized moment-angle complex, the value of the
suspension of the ``polyhedral product functor". Since the
decomposition is geometric, an analogous homological decomposition
for a generalized moment-angle complex applies for any homology
theory. This last decomposition specializes to the homological
decompositions in the work of several authors cited above.
Furthermore, this decomposition gives an additive decomposition for
the Stanley-Reisner ring of a finite simplicial complex extended to
other natural settings and which arises from generalizations of
certain homotopy theoretic results of Porter \cite{porter} and Ganea
\cite{ganea}. Extensions of these structures to a simplicial setting
as well as to the structure of ordered, commuting $n$-tuples in a
Lie group $G$ are given in \cite{abbcg}.

Generalized moment-angle complexes are given by functors determined
by simplicial complexes with values given by subspaces of products
of topological spaces. Thus, these may be regarded as ``polyhedral
product functors" indexed by abstract simplicial complexes,
terminology for which the authors thank Bill Browder (also the
inventor of the name ``orbifold").

\tableofcontents

\section{Statement of results}\label{results}

In this article, a polyhedron is defined to be the geometric
realization of a simplicial complex. Generalized moment-angle
complexes can be be regarded loosely as a functor from simplicial
complexes with values given by subspaces of products of topological
spaces. Thus these may be regarded as "polyhedral product functors".
Generalized moment-angle complexes are defined next.

\begin{enumerate}

\item Let $(\underline{X},\underline{A})=\{(X_i,A_i,x_i)^m_{i=1}\}$ denote
a set of triples of $CW$--complexes with base-point $x_i$ in $A_i$.

\item Let $K$ denote an abstract simplicial complex with $m$ vertices labeled by the set
\newline $[m]=\{1,2,\ldots, m\}$. Thus, a $(k-1)$-simplex $\sigma$ of $K$ is given by
an ordered sequence $\sigma =(i_1,\ldots, i_k)$ with $1 \leq i_1
<\ldots < i_k \leq m$ such that if $\tau \subset \sigma$, then
$\tau$ is a simplex of $K$. In particular the empty set $\phi$ is a
subset of $\sigma$ and so it is in $K$. The set $[m]$ is minimal in
the sense that every $i \in [m]$ belongs to at least one simplex.
The length of of $\sigma=k$ is denoted $|\sigma|$.

\item Let $\Delta[m-1]_q$ denote the $q$-skeleton of $(m-1)$-simplex having $m$ vertices.
That is, $\Delta[m-1]_q$ denotes the subsets of $[m]$ of cardinality
at most $q+1$.
\end{enumerate}

Define two functors from the category $K$ of simplicial complexes
with morphisms simplicial maps to the category $CW_{\ast}$ of
connected, based $CW$-complexes and based continuous maps as
follows.
\begin{defin}\label{defin:moment.angle.complex}
As above, let $(\underline{X},\underline{A})$ denote the collection
$\{(X_i, A_i,x_i)\}^m_{i=1}$.

\item The {\it generalized moment-angle complex or polyhedral product functor } determined by
$\ds{(\underline{X},\underline{A})}$  and $K$ denoted
$$Z(K;(\underline{X},\underline{A}))$$ is defined using the functor $$D: K \to
CW_{\ast}$$ as follows: For every $\sigma$ in $K$, let
$$
D(\sigma) =\prod^m_{i=1}Y_i,\quad {\rm where}\quad
Y_i=\left\{\begin{array}{lcl}
X_i &{\rm if} & i\in \sigma\\
A_i &{\rm if} & i\in [m]-\sigma.
\end{array}\right.$$ with $D(\emptyset) = A_1 \times \ldots \times A_m$.

The generalized moment-angle complex is
$$Z(K;(\underline{X},\underline{A}))=\bigcup_{\sigma \in K}
D(\sigma)= \mbox{colim} D(\sigma)$$ where the colim is defined by
the inclusions, $d_{\sigma,\tau}$ with $\sigma \subset \tau$ and
$D(\sigma)$ is topologized as a subspace of the product $X_1 \times
\ldots \times X_m$. The {\it generalized pointed moment-angle} is
the underlying space $Z(K;(\underline{X},\underline{A}))$ with
base-point $\underline{*} = (x_1, \ldots, x_m) \in
Z(K;(\underline{X},\underline{A}))$.

Note that the definition of $Z(K;(\underline{X},\underline{A}))$ did
not require spaces to be either based or $CW$ complexes. In the
special case where $X_i = X$ and $A_i = A$ for all $1 \leq i \leq
m$, it is convenient to denote the generalized moment-angle complex
by $Z(K;(X,A))$ to coincide with the notation in
\cite{denham.suciu}.
\end{defin}

The smash product $$X_1 \wedge X_2 \wedge \ldots \wedge X_m$$ is
given by the quotient space $(X_1 \times \ldots \times X_m) /S(X_1
\times \ldots \times X_m)$ where $S(X_1 \times \ldots \times X_m)$
is the subspace of the product with at least one coordinate given by
the base-point $x_j \in X_j$. Spaces analogous to generalized
moment-angle complexes are given next where products of spaces are
replaced by smash products, a setting in which base-points are
required.

\begin{defin}\label{defin:smash.product.moment.angle.complex}
Given a generalized pointed-moment-angle complex
$Z(K;(\underline{X},\underline{A}))$ obtained from
$(\underline{X},\underline{A}, \underline{*})$, the {\it generalized
smash moment-angle complex}
$$\widehat{Z}(K;(\underline{X},\underline{A}))$$ is defined to be the image of
$Z(K;(\underline{X},\underline{A}))$ in the smash product $X_1
\wedge X_2 \wedge \ldots \wedge X_m$.

The image of $D(\sigma)$ in $\widehat{Z}(K;
(\underline{X},\underline{A}))$ is denoted by $\widehat{D}(\sigma)$
and is $$Y_1 \wedge Y_2 \wedge \ldots \wedge Y_m$$ where
$$Y_i=\left\{\begin{array}{lcl}
X_i &{\rm if} & i\in \sigma\\
A_i &{\rm if} & i\in [m]-\sigma.
\end{array}\right.
$$
\end{defin}

As in the case of $Z(K;(\underline{X},\underline{A}))$, note that
$\widehat{Z}(K;(\underline{X},\underline{A}))$ is the colimit
obtained from the spaces $ \widehat{D}(\sigma) \mbox{ with }
\widehat{D}(\sigma) \cap \widehat{D}(\tau) = \widehat{D}(\sigma \cap
\tau).$

\begin{remark}\label{remark:naturality}
The constructions $Z(K;(\underline{X},\underline{A}))$ and
$\widehat{Z}(K;(\underline{X},\underline{A}))$ are bifunctors,
natural for morphisms of $(\underline{X},\underline{A})$ as well as
$(\underline{X},\underline{A}, \underline{*})$, and for injections
in $K$.
\end{remark}

The next example provides an elementary case of a moment-angle
complex together with an associated toric manifold.
\begin{exm} \label{exm:example.with.two.points}
Let $K$ denote the simplicial complex given by $\{ \{1\}, \{2\},
\emptyset \}$ with $$(X_i,A_i)=(D^{n_i},S^{n_i-1})\quad{\rm
for}\quad i=1,2,$$ and let $K'$ denote an arbitrary simplicial
complex with $m$ vertices.

\begin{enumerate}

\item The first example is $$(X_i,A_i)=(D^{2n},S^{2n-1})\quad{\rm for}\quad i=1,2$$
where $D^{2n}$ is the $2n$-disk and $S^{2n-1}$ is its boundary
sphere. Then $$Z(K;(D^{2n},S^{2n-1}))=D^{2n}\times S^{2n-1}\cup
S^{2n-1}\times D^{2n}$$ with the boundary of $D^{2n}\times D^{2n}$
given by $\partial(D^{2n}\times D^{2n})=D^{2n}\times S^{2n-1} \cup
S^{2n-1}\times D^{2n}$ so $Z(K;(D^{2n},S^{2n-1}))=S^{4n-1}$.

\item The second example is $(X_1,A_1)=(D^{2},S^{1})$ and $(X_2,A_2)=(D^{3},S^{2})$.
Then $$Z(K;(\underline{X},\underline{A})) = D^{2}\times S^{2}\cup
S^{1}\times D^{3} = S^4.$$

\item The third example exhibits the connection to toric
manifolds where $K'$ is a simplicial complex with $m$ vertices.
Regard $(D^{2n}, S^{2n-1})$ as a pair of subspaces of $\mathbb C^n$
in the standard way with the standard $S^1$-action restricting to an
action on the pair $(D^{2n}, S^{2n-1})$. Thus $T^m$ acts on
$Z(K';(D^{2n}, S^{2n-1}))$. Specialize to $K'=K$. Notice that the
$S^1$ diagonal subgroup $\Delta(T^2)$ acts freely on
$Z(K;(D^{2n},S^{2n-1}))=S^{4n-1}$ where the quotient space
$S^{4n-1}/\Delta(T^m)$ is $\mathbb C \mathbb P^{2n-1}$, complex
projective space of dimension $(2n-1)$. If $n=1$, then $(D^{2n},
S^{2n-1}) = (D^{2}, S^{1})$ and
$$Z(K;(D^2,S^1))=S^3$$ is a case related to toric topology.
That is, $\mathbb C \mathbb P^{1}$ is the toric manifold quotient by
$\Delta(T^2)$ of its associated moment-angle complex.

\item More generally, let $\Delta[m-1]_q$ be the $q$-skeleton of the
$(m-1)$-simplex $\Delta[m-1]$ consisting of all the simplices of
$\Delta[m-1]$ of dimension $\leq q$. Since $$\partial \Delta[m-1]=
\Delta[m-1]_{m-2},$$ there is a homeomorphism
$$Z(\partial\Delta[m-1];(D^{2n},S^{2n-1}))\to S^{2mn-1}$$
and again the diagonal group $\Delta(T^m)=S^1$ acts freely on
$S^{2mn-1}$ with quotient the manifold $\mathbb C \mathbb P^{mn-1}$.
Again when $n=1$, there is a homeomorphism
$$Z(\partial\Delta[m-1];(D^2,S^1))\to S^{2m-1}$$ and the quotient by
the action of the diagonal subgroup is $\mathbb C \mathbb P^{m-1}$,
a description given by $\mathbb C \mathbb P^{m-1}$ as a toric
manifold. A stronger relation between the complex $K'$ and a toric
manifold $M^{2n}$ is given in Section \ref{toric manifolds}.
\end{enumerate}
\end{exm}

\begin{defin}\label{defin:smash.products}
Consider the ordered sequence $I = (i_1, \ldots, i_k)$ with $1 \leq
i_1 <\ldots < i_k \leq m$ together with pointed spaces $Y_1, \ldots,
Y_m$. Then
\begin{enumerate}
\item the length of $I$ is $|I|= k$,
\item the notation $I \leq [m]$ means
$I$ is any subsequence of $(1,\ldots, m)$,
\item $Y^{[m]}=Y_1 \times \ldots \times Y_m,$
\item $Y^{I} = Y_{i_1} \times Y_{i_2} \times \ldots \times
Y_{i_k},$
\item $\widehat{Y}^{I} = Y_{i_1} \wedge \ldots \wedge Y_{i_k},$ and
(redundantly) $\widehat{A}^{I}$ denotes $A_{i_1}\wedge \ldots \wedge
A_{i_k}$ for $I = (i_1, \ldots, i_k)$.
\end{enumerate}

Two more conventions are listed next.
\begin{enumerate}
  \item The symbol $X*Y$ denotes the join of two topological spaces $X$ and
$Y$. If $X$ and $Y$ are pointed spaces of the homotopy type of a
CW-complex, then  $X*Y$ has the homotopy type of the suspension
$\Sigma(X \wedge Y)$.
  \item Given the family of pairs
$(\underline{X},\underline{A})= \{(X_i, A_i)\}^m_{i=1}$ and $I \leq
[m]$, define $$(\underline{X_I},\underline{A_I}) = \{(X_{i_j},
A_{i_j})\}_{j= 1}^{ j=|I|}$$ which is the subfamily of
$(\underline{X},\underline{A})$ determined by $I$.
\end{enumerate}
\end{defin}

Standard constructions for simplicial complexes and associated
posets are recalled next.

\begin{defin}\label{defin:full.subcomplexes.and.realization}
Let $K$ denote a simplicial complex with $m$ vertices.

\begin{enumerate}
\item Recall that the empty simplex $\phi$ is required to be in $K$.

\item Given a sequence  $I = (i_1, \ldots, i_k)$ with $1 \leq i_1 <\ldots <
i_k \leq m $, define $K_I \subseteq K$ to be the {\it full
sub-complex } of $K$ consisting of all simplices of $K$ which have
all of their vertices in $I$, that is $K_I = \{\sigma \cap I |
\sigma \in K\}.$

\item Let $|K|$ denote the geometric realization of the simplicial complex $K$.

\item Associated to a simplicial complex $K$, there is a partially ordered
set (poset) $\bar{K}$ given as follows. A point $\sigma$ in
$\bar{K}$ corresponds to a simplex $\sigma \in K$ with order given
by {\it reverse} inclusion of simplices. Thus $\sigma_1 \leq
\sigma_2$ in $\bar{K}$ if and only if  $\sigma_2 \subseteq \sigma_1$
in $K$. The empty simplex $\phi$ is the unique maximal element of
$\bar{K}$.

Let $P$ be a poset with $p\in P$. There are further posets given by
$$P_{\leq p} = \{q \in P| q \leq p\}$$ as well as
$$P_{< p} = \{q \in P| q < p\}.$$ Thus
$${\bar{K}}_{< \sigma} = \{\tau \in \bar{K}| \tau < \sigma\} =
\{\tau \in K| \tau \supset \sigma\}.$$
\end{enumerate}

\end{defin}

On the other hand, given a poset $P$, there is an associated
simplicial complex $\Delta(P)$ called the  order complex of $P$
which is defined as follows.

\begin{defin}\label{defin:order.complex}
Given a poset $P$, the {\it order complex $\Delta(P)$} is the
simplicial complex with vertices given by the set of points of $P$
and $k$-simplices given by the ordered $(k+1)$-tuples $(p_{1},
p_{2}, \ldots, p_{k+1})$ in $P$ with $p_{1} < p_{2}< \ldots <
p_{k+1}$. It follows that $\Delta(\bar{K}) = \mbox{cone}(K')$ where
$K'$ denotes the barycentric subdivision of $K$.
\end{defin}

The following decomposition is well-known \cite{milnor, james} for
which $\bigvee$ denotes the wedge, and $\Sigma (X)$ denotes the
reduced suspension of $X$.
\begin{thm} \label{thm:T2.1}
Let $(Y_i,y_i)$ be pointed CW-complexes. There is a pointed, natural
homotopy equivalence $$H:\Sigma (Y_1 \times \ldots \times Y_m) \to
\Sigma( \bigvee_{I \leq [m]} \widehat{Y}^I )$$ where $I$ runs over
all the non--empty sub-sequences of $(1,2,\ldots,m)$. Furthermore,
the map $H$ commutes with inductive colimits.
\end{thm}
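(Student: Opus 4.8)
The plan is to prove the equivalence by induction on $m$, taking as the base case the classical splitting of a suspended two-fold product and then bootstrapping on the fact that suspension converts a product into a single extra smash factor at each stage. First I would record the explicit map $H$. For each non-empty $I \leq [m]$ let $\pi_I \colon Y_1 \times \cdots \times Y_m \to Y^I$ be the coordinate projection and $q_I \colon Y^I \to \widehat{Y}^{I}$ the smash quotient, so that $q_I \circ \pi_I$ is a based map. Using the natural comultiplication (pinch map) on a suspension, there is a map $\Sigma(Y_1 \times \cdots \times Y_m) \to \bigvee_{I} \Sigma(Y_1 \times \cdots \times Y_m)$ with one summand for each non-empty $I$; applying $\Sigma(q_I \circ \pi_I)$ on the $I$-th summand produces $H$ with target $\bigvee_I \Sigma \widehat{Y}^{I} = \Sigma\bigl(\bigvee_I \widehat{Y}^{I}\bigr)$. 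Every ingredient here, namely the projection, the smash quotient, and the co-$H$ comultiplication, is natural and basepoint-preserving, which will supply the asserted naturality.

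For the base case $m = 2$ I would use the cofibration $X \vee Y \hookrightarrow X \times Y \to X \wedge Y$. Suspending yields a cofibre sequence, and the projections $p_X, p_Y$ assembled through the pinch map give a map $\Sigma(X \times Y) \to \Sigma X \vee \Sigma Y$ that retracts the suspended inclusion $\Sigma(X \vee Y)$. A direct check on each wedge summand, using that each projection restricts to the identity on its own factor and to the constant map on the other, shows this is a retraction up to homotopy. A suspended cofibre sequence admitting such a retraction splits, giving the natural equivalence $\Sigma(X \times Y) \simeq \Sigma X \vee \Sigma Y \vee \Sigma(X \wedge Y)$.

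For the inductive step, write $Y_1 \times \cdots \times Y_m = P \times Y_m$ with $P = Y_1 \times \cdots \times Y_{m-1}$ and apply the base case to get $\Sigma(P \times Y_m) \simeq \Sigma P \vee \Sigma Y_m \vee \Sigma(P \wedge Y_m)$. The inductive hypothesis splits $\Sigma P \simeq \bigvee_{J} \Sigma \widehat{Y}^{J}$ over non-empty $J \leq [m-1]$, accounting for all subsequences that miss $m$. For the last term I use that suspension commutes with smashing, $\Sigma P \wedge Y_m = \Sigma(P \wedge Y_m)$, and that smashing distributes over wedges, so $\Sigma(P \wedge Y_m) \simeq \bigl(\bigvee_J \Sigma \widehat{Y}^{J}\bigr) \wedge Y_m \simeq \bigvee_J \Sigma(\widehat{Y}^{J} \wedge Y_m) = \bigvee_J \Sigma \widehat{Y}^{J \cup \{m\}}$, which accounts for all subsequences containing $m$ of length at least two. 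Together with the summand $\Sigma Y_m = \Sigma \widehat{Y}^{\{m\}}$, this exhausts every non-empty $I \leq [m]$. I would then verify that, under these identifications, the composite equivalence coincides up to homotopy with the map $H$ built from projections and the pinch map; this is the bookkeeping step that matches the co-$H$ comultiplication defining $H$ with the iterated comultiplications implicit in the base-case retractions.

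I expect the main obstacle to be precisely this compatibility check rather than the existence of an abstract equivalence: showing that the inductively assembled equivalence is the \emph{specific} natural transformation $H$, so that naturality in $(\underline{X},\underline{A})$ and commutation with inductive colimits both follow. The latter is then automatic once $H$ is exhibited as natural, since the structure maps $d_{\sigma,\tau}$ presenting $Z(K;(\underline{X},\underline{A}))$ as a colimit are induced by maps of the factors $Y_i$, and $H$ is built entirely from functorial operations; consequently $H$ carries the colimit presentation of $\Sigma Z(K;(\underline{X},\underline{A}))$ to the corresponding wedge decomposition compatibly.
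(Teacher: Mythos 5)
Your proposal is correct in outline, but you should know that the paper itself contains no proof of this statement: Theorem \ref{thm:T2.1} is quoted as a well-known classical result, with references to James and Milnor, and the map $H$ is then used as a black box in the proof of Theorem \ref{thm:decompositions.for.general.moment.angle.complexes}. So your argument is a reconstruction of the classical splitting rather than a parallel to anything written in the paper, and it is the standard one: the explicit $H$ (iterated pinch followed by $\Sigma(q_I \circ \pi_I)$ on the $I$-th wedge summand) is the intended map; the base case follows from the cofibration $X \vee Y \to X \times Y \to X \wedge Y$, whose suspension splits because the natural retraction onto $\Sigma X \vee \Sigma Y$ makes the assembled map to $\Sigma X \vee \Sigma Y \vee \Sigma (X \wedge Y)$ a homology isomorphism; and the inductive step via $\Sigma(P\wedge Y_m)=\Sigma P \wedge Y_m$ and distributivity of smash over wedge exhausts the nonempty $I \leq [m]$ correctly.

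Two points deserve care. First, the splitting lemma you invoke (``a suspended cofibre sequence admitting such a retraction splits'') is proved by a homology isomorphism plus Whitehead's theorem, so it needs the suspensions involved to be simply connected, i.e.\ the $Y_i$ connected; this is harmless here because the paper only applies the theorem to connected spaces, but for disconnected $Y_i$ one needs an extra $\pi_1$ argument or the original James--Milnor constructions. Second, the compatibility check you flag is indeed the real content if one wants the equivalence to be the \emph{specific} natural map $H$ rather than an abstract one; it does go through, essentially because $q_{J\cup\{m\}}\circ \pi_{J\cup\{m\}}$ factors as $\bigl((q_J\circ\pi_J)\wedge \mathrm{id}_{Y_m}\bigr)$ composed with the quotient $P\times Y_m \to P\wedge Y_m$, and the pinch comultiplication is coassociative up to homotopy. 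This matters because what the paper actually uses downstream is not the bare existence of a splitting but the naturality of $H$ in the tuple $(Y_1,\ldots,Y_m)$ --- that is what produces the map of diagrams fed into the Homotopy Lemma (Theorem \ref{T:3.4}) in the proof of Theorem \ref{thm:decompositions.for.general.moment.angle.complexes} --- together with its compatibility with colimits. Your explicit construction makes both of these visible, which the paper's citation does not; conversely, the citation spares the reader the bookkeeping you correctly identify as the main labor.
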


\begin{remark}\label{remark:naturality.of.Hopf.invariant}
The natural homotopy equivalence $H:\Sigma (Y_1 \times \ldots \times
Y_m) \to \Sigma( \bigvee_{I \leq [m]} \widehat{Y}^I )$ in Theorem
 \ref{thm:T2.1} is defined and used in the proof of Theorem
 \ref{thm:decompositions.for.general.moment.angle.complexes}
below.
\end{remark}

Recall from Definition \ref{defin:smash.products} that
$(\underline{X_I}, \underline{A_I})$ denotes the sub-collection of
$(\underline{X},\underline{A})$ determined by $I$. An application of
Theorem \ref{thm:T2.1} yields the following splitting theorem.
\begin{thm} \label{thm:decompositions.for.general.moment.angle.complexes}
Given $(\underline{X},\underline{A}) =\{(X_i, A_i)\}^m_{i=1}$ where
$(X_i,A_i,x_i)$ are connected, pointed CW-pairs, the homotopy
equivalence of Theorem \ref{thm:T2.1} induces a natural, pointed
homotopy equivalence
$$H: \Sigma(Z(K;(\underline{X},\underline{A})))\to \Sigma(\bigvee_{I \leq [m]}
\widehat{Z}(K_I;(\underline{X_I},\underline{A_I}))).$$
\end{thm}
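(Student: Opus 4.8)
The plan is to apply the stable splitting of Theorem \ref{thm:T2.1} directly to the product $X_1 \times \ldots \times X_m$ and then verify that it is compatible with the colimit structure defining $Z(K;(\underline{X},\underline{A}))$. First I would take the ambient product $X^{[m]} = X_1 \times \ldots \times X_m$ and apply Theorem \ref{thm:T2.1} to obtain a natural homotopy equivalence
$$H : \Sigma(X^{[m]}) \to \Sigma\Big( \bigvee_{I \leq [m]} \widehat{X}^I \Big),$$
where $I$ ranges over non-empty subsequences of $(1,\ldots,m)$. The key point is that $Z(K;(\underline{X},\underline{A}))$ is the subspace $\mbox{colim}_{\sigma \in K} D(\sigma)$ of $X^{[m]}$, and $H$ commutes with inductive colimits, so restricting $H$ to this subspace gives a decomposition of $\Sigma(Z(K;(\underline{X},\underline{A})))$ as a wedge indexed by $I$. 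The task is then to identify, for each fixed $I$, the wedge summand that comes from $Z(K;(\underline{X},\underline{A}))$.

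Next I would analyze one summand at a time. Fix $I \leq [m]$ with $|I| = k$. The projection $X^{[m]} \to \widehat{X}^I = X_{i_1} \wedge \ldots \wedge X_{i_k}$ collapses every coordinate outside $I$ to a point and smashes the coordinates in $I$. On a typical piece $D(\sigma)$, the $i$-th coordinate is $X_i$ if $i \in \sigma$ and $A_i$ otherwise; after smashing, the image of $D(\sigma)$ under the $I$-projection is $\widehat{Y}^I$ with $Y_{i_j} = X_{i_j}$ when $i_j \in \sigma$ and $Y_{i_j} = A_{i_j}$ when $i_j \notin \sigma$. Crucially, only the coordinates indexed by $I$ survive the smash, so this image depends on $\sigma$ only through $\sigma \cap I$; coordinates outside $I$ being $X_i$ or $A_i$ is irrelevant after collapsing. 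This is exactly the recipe defining $\widehat{D}(\sigma \cap I)$ for the full subcomplex $K_I$, using the subfamily $(\underline{X_I}, \underline{A_I})$. I would then check that as $\sigma$ runs over $K$, the images $\sigma \cap I$ run over exactly the simplices of $K_I = \{\sigma \cap I \mid \sigma \in K\}$, and that the inclusions $\widehat{D}(\sigma \cap I) \cap \widehat{D}(\tau \cap I) = \widehat{D}((\sigma \cap I) \cap (\tau \cap I))$ match the colimit structure. Hence the $I$-summand of $\mbox{colim}_{\sigma} \widehat{D}(\sigma)$ is precisely $\widehat{Z}(K_I;(\underline{X_I},\underline{A_I}))$.

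The main obstacle I expect is the bookkeeping that makes the colimit-compatibility rigorous rather than merely plausible. One must be careful that the smash projection genuinely sends the colimit $Z(K;(\underline{X},\underline{A})) = \mbox{colim}_{\sigma \in K} D(\sigma)$ to the colimit $\widehat{Z}(K_I;(\underline{X_I},\underline{A_I})) = \mbox{colim}_{\sigma \in K} \widehat{D}(\sigma \cap I)$, and that the intersection pattern $\widehat{D}(\sigma) \cap \widehat{D}(\tau) = \widehat{D}(\sigma \cap \tau)$ recorded after Definition \ref{defin:smash.product.moment.angle.complex} is preserved under passage to $K_I$. The subtlety is that several distinct simplices $\sigma$ of $K$ may collapse to the same $\sigma \cap I$, so one is checking that the induced map on colimits is well-defined and an isomorphism of diagrams, not that the indexing is bijective. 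Since $H$ is natural and commutes with inductive colimits by Theorem \ref{thm:T2.1}, once the summand-by-summand identification is established the assembled map
$$H : \Sigma(Z(K;(\underline{X},\underline{A}))) \to \Sigma\Big( \bigvee_{I \leq [m]} \widehat{Z}(K_I;(\underline{X_I},\underline{A_I})) \Big)$$
is a natural, pointed homotopy equivalence, which is the desired conclusion.
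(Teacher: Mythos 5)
Your overall strategy---applying Theorem \ref{thm:T2.1} piecewise, identifying the image of each $D(\sigma)$ under the $I$-th smash projection as $\widehat{D}(\sigma\cap I)$, and assembling over the colimit---is essentially the paper's strategy, and your combinatorial identification of the target as $\bigvee_{I\leq[m]}\widehat{Z}(K_I;(\underline{X_I},\underline{A_I}))$ is correct. However, the obstacle you single out (well-definedness of the induced map on colimits when distinct simplices $\sigma$ collapse to the same $\sigma\cap I$) is routine bookkeeping; the genuine gap lies in your final step, where you conclude that the assembled map is a homotopy equivalence ``since $H$ is natural and commutes with inductive colimits.'' Naturality and colimit-compatibility only tell you that the restriction of $H$ to $\Sigma(Z(K;(\underline{X},\underline{A})))$ \emph{equals} the colimit of the objectwise equivalences $H_\sigma\colon \Sigma D(\sigma)\to \Sigma E(\sigma)$, where $E(\sigma)=\bigvee_{I}\widehat{D}(\sigma\cap I)$. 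They do not tell you that this colimit map is itself a homotopy equivalence: a map of diagrams that is an objectwise homotopy equivalence need not induce an equivalence on colimits. For instance, the map from the diagram $\ast\leftarrow S^n\hookrightarrow D^{n+1}$ to the diagram $\ast\leftarrow S^n\rightarrow\ast$ is an equivalence on every object, yet on colimits it is the map $S^{n+1}\to\ast$.

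This implication is exactly what the machinery of Section \ref{sec:Preparation} is assembled to provide, and it is where the cofibrancy of the diagrams (all maps are inclusions of closed CW-subcomplexes) enters. The paper's proof proceeds by: (i) using the Projection Lemma (Theorem \ref{thm:hocolim.to.colim}) to replace both colimits by homotopy colimits, which is legitimate precisely because the diagram maps are closed cofibrations; (ii) applying the Homotopy Lemma (Theorem \ref{T:3.4}) to conclude that the objectwise equivalence $H$ induces an equivalence of homotopy colimits; and (iii) using Theorem \ref{thm:3.3} (suspension commutes with finite colimits, proved via the Mayer--Vietoris spectral sequence and Whitehead's theorem) to identify $\Sigma(\mathrm{colim})$ with $\mathrm{colim}(\Sigma)$ on both sides. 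To repair your argument you must either invoke this homotopy-colimit machinery, or argue directly that your restricted map is a homology isomorphism between simply connected spaces (by induction over the pieces and Mayer--Vietoris, again using that all inclusions are cofibrations) and then apply Whitehead's theorem. In either case some gluing/cofibrancy input is indispensable; the phrase ``commutes with inductive colimits'' in Theorem \ref{thm:T2.1} cannot carry that weight on its own.
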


\begin{remark}\label{remark:non.decomposition}
The spaces $Z(K;(\underline{X},\underline{A}))$ generally do not
decompose as a wedge before suspending. One example is given where
$K$ is the simplicial complex determined by a square, with $4$
vertices and $4$ edges, and where $Z(K;(D^2,S^1))$ is $S^3\times
S^3$.
\end{remark}

The next result is a determination of the homotopy type of the
$Z(K;(\underline{X},\underline{A}))$ in case the inclusions
$A_i\hookrightarrow X_i$ are null-homotopic for every $i\in [m]$.
\begin{thm}\label{T:1.2}
Let $K$ be an abstract simplicial complex and $\overline{K}$ its
associated poset. Let
$(\underline{X},\underline{A})=\{(X_i,A_i,x_i)\}^m_{i=1}$ denote $m$
choices of connected, pointed pairs of $CW$-complexes, with the
inclusion $A_i\subset X_i$ null-homotopic for all $i$. Then there is
a homotopy equivalence
$$\widehat{Z}(K;(\underline{X},\underline{A}))\to\bigvee\limits_{\sigma\in
K} |\Delta(\overline{K}_{<\sigma})|*\widehat{D}(\sigma).$$
\end{thm}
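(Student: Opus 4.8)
The plan is to prove Theorem~\ref{T:1.2} by building on the suspension splitting of Theorem~\ref{thm:decompositions.for.general.moment.angle.complexes}, but first reducing to the smash setting and then exploiting the null-homotopy hypothesis to identify each smash summand $\widehat{Z}(K_I;(\underline{X_I},\underline{A_I}))$ explicitly. First I would observe that because each inclusion $A_i \hookrightarrow X_i$ is null-homotopic, the cofibre $X_i/A_i$ is homotopy equivalent to $X_i \vee \Sigma A_i$, and more usefully the smash of the pair behaves like a join: the null-homotopy gives, for each fixed $\sigma$, a canonical way to collapse the $A_i$-factors. The key reduction is that $\widehat{D}(\sigma) = \widehat{Y}^{[m]}$ with $Y_i = X_i$ for $i \in \sigma$ and $Y_i = A_i$ otherwise; I want to show that the full smash moment-angle complex $\widehat{Z}(K;(\underline{X},\underline{A}))$ decomposes as a wedge indexed by the simplices $\sigma \in K$, with the summand corresponding to $\sigma$ carrying the factor $\widehat{D}(\sigma)$ and an auxiliary join factor $|\Delta(\overline{K}_{<\sigma})|$ recording the combinatorics of the simplices containing $\sigma$.

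The main structural step is to set up a filtration of $\widehat{Z}(K;(\underline{X},\underline{A}))$ by the cardinality $|\sigma|$ of the simplices, and to analyze the associated cofibre sequences. At each stage, attaching the cells of $\widehat{D}(\tau)$ for $\tau \supset \sigma$ corresponds, after the null-homotopy is invoked, to coning off a subcomplex whose homotopy type is governed by the order complex of the poset $\overline{K}_{<\sigma} = \{\tau \in K \mid \tau \supset \sigma\}$. Concretely, I would argue that the relevant quotient in the filtration is the smash of $\widehat{D}(\sigma)$ with the (unreduced) suspension-type object built from $|\Delta(\overline{K}_{<\sigma})|$; the appearance of the join $|\Delta(\overline{K}_{<\sigma})| * \widehat{D}(\sigma)$ rather than a mere product reflects that the null-homotopies produce suspension coordinates, and the join $X * Y \simeq \Sigma(X \wedge Y)$ from Definition~\ref{defin:smash.products} is precisely the bookkeeping device. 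The naturality of $H$ and its compatibility with colimits (Theorem~\ref{thm:T2.1}) let me assemble these stagewise identifications into a single wedge decomposition, since the filtration quotients split off compatibly once each inclusion is null-homotopic.

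The hard part will be showing that the filtration cofibre sequences actually split, and split \emph{naturally}, so that the summands assemble into a wedge rather than a more complicated twisted object. Without the null-homotopy hypothesis there is genuine attaching data (as Remark~\ref{remark:non.decomposition} emphasizes, even unsuspended moment-angle complexes need not split), so the crux is to use the null-homotopies $A_i \simeq *$ in $X_i$ to produce explicit nullhomotopies of the attaching maps in the smash filtration, thereby trivializing each extension. I expect the cleanest route is to define, for each simplex $\sigma$, an explicit collapse map $\widehat{Z}(K;(\underline{X},\underline{A})) \to |\Delta(\overline{K}_{<\sigma})| * \widehat{D}(\sigma)$ using the chosen null-homotopies as coordinates on a mapping-cylinder/cone neighbourhood, and then to verify that the wedge of these collapse maps is a homotopy equivalence by checking it induces an isomorphism on homology (via the already-established stable splitting) together with a connectivity/cell-count argument. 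Verifying that these collapses are mutually compatible and that their wedge realizes the inverse of the natural inclusion of summands is the step requiring the most care, since it is exactly where the combinatorics of $\overline{K}_{<\sigma}$ must be matched term by term against the topology of the smash pairs.
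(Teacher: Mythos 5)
Your guiding intuition---that the null-homotopies $A_i\simeq \ast$ in $X_i$ are what trivialize all attaching data---is correct, but the two places where you defer the real work are exactly where your argument breaks, and the mechanism you propose does not produce the stated answer. First, note that Theorem \ref{T:1.2} is an \emph{unsuspended} statement, so the suspension splitting of Theorem \ref{thm:decompositions.for.general.moment.angle.complexes} cannot be its engine. More concretely, your filtration of $\widehat{Z}(K;(\underline{X},\underline{A}))$ by cardinality of simplices has subquotients
$$F_j/F_{j-1}\cong \bigvee_{\sigma\in K,\ |\sigma|=j}\Bigl(\bigwedge_{i\in\sigma}(X_i/A_i)\Bigr)\wedge \widehat{A}^{[m]\setminus\sigma},$$
in which no order complex appears at all: the poset $\overline{K}_{<\sigma}$ of simplices strictly containing $\sigma$ is simply not visible in these quotients, so your claim that the filtration quotient is built from $|\Delta(\overline{K}_{<\sigma})|$ and $\widehat{D}(\sigma)$ is incorrect as stated. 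Reconciling this Hochster-type decomposition with the claimed wedge of joins (via $X_i/A_i\simeq X_i\vee\Sigma A_i$ and cancelling wedge summands) is not a valid step, since wedge summands cannot be cancelled. Second, your proposed verification that the wedge of collapse maps is an equivalence ``via the already-established stable splitting'' is circular: that splitting expresses $\Sigma Z(K;(\underline{X},\underline{A}))$ in terms of the spaces $\widehat{Z}(K_I;(\underline{X_I},\underline{A_I}))$, whose homotopy and homology types are precisely what Theorem \ref{T:1.2} is supposed to compute; it supplies no independent calculation of $H_*(\widehat{Z}(K;(\underline{X},\underline{A})))$ against which to check your maps. (A smaller but real point: maps into a wedge do not assemble from maps to the individual summands, since the wedge is a coproduct; one needs a single collapse map whose target happens to be the wedge, and identifying what to collapse is again the crux.)

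The paper's proof sidesteps all of this with homotopy colimit machinery over the poset $\overline{K}$. The hypothesis plus the homotopy extension theorem yield self-maps $g_k\colon X_k\to X_k$ homotopic to the identity with $g_k(A_k)=x_k$; smashing these with identities gives a map of diagrams from $\widehat{D}$ (structure maps the natural inclusions) to a diagram $\widehat{E}$ having the same spaces but with all structure maps \emph{constant}, and this map is an objectwise homotopy equivalence. The Homotopy Lemma (Theorem \ref{T:3.4}) identifies the homotopy colimits of $\widehat{D}$ and $\widehat{E}$; the Projection Lemma (Theorem \ref{thm:hocolim.to.colim}) identifies ${\rm hocolim}(\widehat{D})$ with ${\rm colim}(\widehat{D})=\widehat{Z}(K;(\underline{X},\underline{A}))$; and the Wedge Lemma (Theorem \ref{thm:maximal.elements}) applies verbatim to the constant-maps diagram $\widehat{E}$ to give $\bigvee_{\sigma\in K}|\Delta(\overline{K}_{<\sigma})|\ast\widehat{D}(\sigma)$, with no suspension. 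What your outline is missing is exactly the content of that Wedge Lemma---the joins with $|\Delta(\overline{K}_{<\sigma})|$ arise from the cone structure of $\Delta(\overline{K}_{\leq\sigma})$ in the homotopy colimit, not from a cardinality filtration of the colimit---so your plan amounts to reproving it from scratch, and that is where all of the work lies.
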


This theorem is a generalization of the wedge lemma of
Welker-Ziegler-\v{Z}ivaljevi\'c in \cite{welker.ziegler.zivaljevic},
applied to generalized moment-angle complexes. Substituting Theorem
\ref{T:1.2} in Theorem
\ref{thm:decompositions.for.general.moment.angle.complexes} yields
the next result.

\begin{thm}\label{T:1.3}
Let $K$ be an abstract simplicial complex with $m$ vertices, and let
$$(\underline{X},\underline{A})=\{(X_i,A_i,x_i)\}^m_{i=1}$$ denote $m$
choices of connected, pointed pairs of $CW$-complexes with the
inclusion $A_i\subset X_i$ null-homotopic for all $i$. Then there is
a homotopy equivalence
$$\Sigma (Z(K;(\underline{X},\underline{A}))) \to \Sigma(\bigvee\limits_I(\bigvee\limits_{\sigma\in
K_I}|\Delta((\overline{K}_I)_{<\sigma}|*\widehat{D}(\sigma))).$$
\end{thm}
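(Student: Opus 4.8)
The plan is to obtain Theorem~\ref{T:1.3} by substitution,
combining the two decomposition results already established.
First I would apply Theorem
\ref{thm:decompositions.for.general.moment.angle.complexes}
to the pair $(\underline{X},\underline{A})$ and the complex $K$,
which gives a natural, pointed homotopy equivalence
$$\Sigma(Z(K;(\underline{X},\underline{A}))) \to
\Sigma\bigl(\bigvee_{I \leq [m]}
\widehat{Z}(K_I;(\underline{X_I},\underline{A_I}))\bigr).$$
This reduces the problem to understanding each smash moment-angle
complex $\widehat{Z}(K_I;(\underline{X_I},\underline{A_I}))$ after
a single suspension, so the left-hand side has already been broken
into pieces indexed by the subsequences $I \leq [m]$.

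Next I would invoke Theorem~\ref{T:1.2} to resolve each summand.
The crucial observation is that the full subcomplex $K_I$ is itself
an abstract simplicial complex, now on the vertex set $I$, and that
the hypothesis that $A_i \subset X_i$ is null-homotopic for every
$i \in [m]$ is inherited by the subfamily
$(\underline{X_I},\underline{A_I})$. Hence Theorem~\ref{T:1.2}
applies verbatim with $K$ replaced by $K_I$ and
$(\underline{X},\underline{A})$ replaced by
$(\underline{X_I},\underline{A_I})$, yielding a homotopy equivalence
$$\widehat{Z}(K_I;(\underline{X_I},\underline{A_I})) \to
\bigvee_{\sigma \in K_I}
|\Delta((\overline{K}_I)_{<\sigma})| * \widehat{D}(\sigma),$$
where $\overline{K}_I$ is the poset associated to $K_I$ and
$\widehat{D}(\sigma)$ is the smash factor attached to the simplex
$\sigma$ of $K_I$. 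Substituting this equivalence into each wedge
summand of the previous display, and using that suspension carries
wedges to wedges, produces the claimed equivalence
$$\Sigma(Z(K;(\underline{X},\underline{A}))) \to
\Sigma\bigl(\bigvee_I \bigl(\bigvee_{\sigma \in K_I}
|\Delta((\overline{K}_I)_{<\sigma})| * \widehat{D}(\sigma)
\bigr)\bigr).$$

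The main obstacle I anticipate is bookkeeping rather than deep
topology: I must check that the equivalence of Theorem~\ref{T:1.2}
is natural enough to be inserted inside the suspended wedge produced
by Theorem
\ref{thm:decompositions.for.general.moment.angle.complexes},
so that the composite is genuinely a homotopy equivalence of the
whole space and not merely a summand-by-summand statement. Since
both input theorems are asserted to be natural and the splitting of
Theorem~\ref{thm:T2.1} commutes with inductive colimits, I expect
the substitution to be compatible with the inclusions indexing the
colimit, and the naturality clauses to guarantee that the wedge of
the individual equivalences assembles into a single equivalence
after applying $\Sigma$. A secondary point worth verifying is that
the indexing is consistent: the inner wedge runs over simplices
$\sigma \in K_I$ and the poset appearing is $(\overline{K}_I)_{<\sigma}$,
the order-reversed poset of the full subcomplex $K_I$, rather than
the restriction to $I$ of $\overline{K}_{<\sigma}$; confirming that
these agree (as they do, since taking full subcomplexes commutes
with passing to the associated poset) is what makes the substitution
literal. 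Once these compatibility checks are in place, the result
follows immediately by composing the two equivalences and applying
$\Sigma$.
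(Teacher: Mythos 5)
Your proposal is correct and is essentially the paper's own argument: the paper obtains Theorem \ref{T:1.3} precisely by substituting the wedge-lemma decomposition of Theorem \ref{T:1.2} into the stable splitting of Theorem \ref{thm:decompositions.for.general.moment.angle.complexes}, exactly as you do. The bookkeeping points you flag (that the null-homotopy hypothesis is inherited by each subfamily $(\underline{X_I},\underline{A_I})$, and that the poset $(\overline{K}_I)_{<\sigma}$ is the correct one for the full subcomplex $K_I$) are the only checks needed, and they go through as you describe.
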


Two special cases of Theorem \ref{T:1.3} are presented next where
either (i) the $A_i$ are contractible for all $i$ or (ii) the $X_i$
are contractible for all $i$. In the case for which all $A_i$ are
contractible, the decomposition given in Theorem
\ref{thm:decompositions.for.general.moment.angle.complexes} has the
property that $\widehat{X}^\s$ is contractible for all $\s \neq
[m]$. The one remaining summand in Theorem
\ref{thm:decompositions.for.general.moment.angle.complexes} occurs
for $\s = [m]$ and thus $\widehat{X}^\s = \widehat{X}^{[m]}$ as
given in Definition \ref{defin:smash.products}. The next result
follows at once.

\begin{thm} \label{thm:contractible.A}
If all of the $A_i$ are contractible with $X_i$ and $A_i$ closed
CW-complexes for all $i$, then there is a homotopy equivalence
\[
\widehat{Z}(K;(\underline{X}, \underline{A})) =
\begin{cases}
* & \text{ if $K$ is not the simplex $\Delta[m-1]$, and}\\
\widehat{X}^{[m]} & \text{if $K$ is the simplex $\Delta[m-1]$.}
\end{cases}
\]
\end{thm}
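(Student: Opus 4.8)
The plan is to read off the answer from the wedge decomposition of the smash moment-angle complex. First note that the second case, $K = \Delta[m-1]$, requires no contractibility hypothesis at all and is immediate from Definition \ref{defin:smash.product.moment.angle.complex}: when $[m]$ is a simplex of $K$ the top piece $\widehat{D}([m]) = X_1 \wedge \cdots \wedge X_m = \widehat{X}^{[m]}$ is the image of $D([m]) = X_1 \times \cdots \times X_m$, which is already the whole smash product, so $\widehat{Z}(\Delta[m-1];(\underline{X},\underline{A})) = \widehat{X}^{[m]}$ on the nose. The substance of the theorem is therefore the first case: when $[m] \notin K$ the space $\widehat{Z}(K;(\underline{X},\underline{A}))$ must be shown to be contractible, and since a union of contractible subspaces need not be contractible this cannot be argued naively from the definition $\widehat{Z} = \bigcup_\sigma \widehat{D}(\sigma)$.

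To control the homotopy type I would invoke Theorem \ref{T:1.2}. Its hypotheses are met here: each $A_i$ is a connected (being contractible) pointed closed CW-complex, and because $A_i \simeq *$ the inclusion $A_i \hookrightarrow X_i$ is null-homotopic for every $i$. Theorem \ref{T:1.2} then yields a homotopy equivalence
$$\widehat{Z}(K;(\underline{X},\underline{A})) \;\simeq\; \bigvee_{\sigma \in K} |\Delta(\overline{K}_{<\sigma})| * \widehat{D}(\sigma),$$
and it remains only to identify which wedge summands survive. For any $\sigma \neq [m]$ there is an index $i \in [m]\setminus\sigma$, so the contractible factor $A_i$ occurs in the smash $\widehat{D}(\sigma) = Y_1 \wedge \cdots \wedge Y_m$; since the smash product of based CW-complexes is a based-homotopy functor and smashing with a contractible space gives a contractible space, $\widehat{D}(\sigma) \simeq *$. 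By the join-smash relation of Definition \ref{defin:smash.products}, $|\Delta(\overline{K}_{<\sigma})| * \widehat{D}(\sigma) \simeq \Sigma(|\Delta(\overline{K}_{<\sigma})| \wedge \widehat{D}(\sigma)) \simeq *$; equivalently, the join of any space with a contractible space is a cone, hence contractible. Thus every summand indexed by $\sigma \neq [m]$ is contractible.

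Finally I would treat the single summand $\sigma = [m]$, which appears in the wedge precisely when $[m] \in K$, i.e. when $K = \Delta[m-1]$. There $\widehat{D}([m]) = \widehat{X}^{[m]}$, while $\overline{K}_{<[m]} = \{\tau \in K : \tau \supset [m]\} = \emptyset$, so $|\Delta(\overline{K}_{<[m]})|$ is the empty complex and the degenerate join $\emptyset * \widehat{X}^{[m]} = \widehat{X}^{[m]}$ contributes $\widehat{X}^{[m]}$ itself. Collapsing the contractible wedge summands then gives $\widehat{Z}(K;(\underline{X},\underline{A})) \simeq *$ when $K \neq \Delta[m-1]$ and $\widehat{Z}(K;(\underline{X},\underline{A})) \simeq \widehat{X}^{[m]}$ when $K = \Delta[m-1]$, as claimed. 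The only delicate points — and the main obstacle to a slick proof — are the two boundary conventions that make the top summand behave correctly: that joining with a contractible space produces a contractible space (so all lower summands die), and the empty-join convention $\emptyset * Y = Y$ coming from $\overline{K}_{<[m]} = \emptyset$ (so the top summand is exactly $\widehat{X}^{[m]}$, with no spurious suspension). Both rest on the homotopy-functoriality of the smash and join on CW-complexes guaranteed by the closed CW hypothesis.
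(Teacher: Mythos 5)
Your proposal is correct and takes essentially the same route as the paper: the paper's own justification is the paragraph immediately preceding the theorem, which invokes the wedge-lemma decomposition of Theorem \ref{T:1.2} (applicable because contractible $A_i$ makes each inclusion $A_i \hookrightarrow X_i$ null-homotopic), observes that every summand with $\sigma \neq [m]$ is contractible since $\widehat{D}(\sigma)$ then contains a contractible smash factor, and identifies the one remaining summand at $\sigma = [m]$ with $\widehat{X}^{[m]}$. Your explicit handling of the empty-join convention $\emptyset * Y = Y$ and of the trivial case $K = \Delta[m-1]$ just fills in details the paper compresses into ``follows at once.''
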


Notice that the spaces
$\widehat{Z}(K_I;(\underline{X_I},\underline{A_I}))$ of Theorem
\ref{thm:decompositions.for.general.moment.angle.complexes} are all
contractible unless $K_I$ is a simplex of $K$ by Theorem
\ref{thm:contractible.A}. The simplices of $K$ have been identified
with certain increasing sequences $I \leq [m]$. Thus, the next
result follows immediately from Theorems
\ref{thm:decompositions.for.general.moment.angle.complexes} and
\ref{thm:contractible.A}.

\begin{thm} \label{thm:more.contractible.A}
If  all of the $A_i$ are contractible with $X_i$ and $A_i$ closed
CW-complexes for all $i$, then there is a homotopy equivalence
$$\Sigma( Z(K;(\underline{X_I},\underline{A_I}))\to  \Sigma( \bigvee_{I \in K} \widehat{X}^{I}).$$
\end{thm}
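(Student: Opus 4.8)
The plan is to read the conclusion directly off the suspension splitting of Theorem \ref{thm:decompositions.for.general.moment.angle.complexes} by evaluating each of its wedge summands under the present hypotheses and then discarding the contractible ones. That theorem supplies a natural, pointed homotopy equivalence
$$\Sigma(Z(K;(\underline{X},\underline{A}))) \to \Sigma\Big(\bigvee_{I \leq [m]} \widehat{Z}(K_I;(\underline{X_I},\underline{A_I}))\Big),$$
so it suffices to identify the homotopy type of each factor $\widehat{Z}(K_I;(\underline{X_I},\underline{A_I}))$ and to show that all but finitely many collapse to a point.

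First I would apply Theorem \ref{thm:contractible.A} to the full sub-complex $K_I$ together with the restricted family $(\underline{X_I},\underline{A_I})$. The hypotheses — all $A_i$ contractible, and all $X_i, A_i$ closed CW-complexes — are inherited by any subfamily, so the theorem applies verbatim with $[m]$ replaced by $I$, yielding
$$\widehat{Z}(K_I;(\underline{X_I},\underline{A_I})) \simeq \begin{cases} * & \text{if } K_I \neq \Delta[\,|I|-1\,],\\ \widehat{X}^{I} & \text{if } K_I = \Delta[\,|I|-1\,], \end{cases}$$
where $\Delta[\,|I|-1\,]$ denotes the full simplex on the vertex set $I$.

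The combinatorial step is to pin down exactly which $I$ give the full simplex. By Definition \ref{defin:full.subcomplexes.and.realization}, $K_I$ is the full simplex on $I$ precisely when every subset of $I$ lies in $K_I$, which holds if and only if $I$ is itself a simplex of $K$: if $I \in K$ then closure under faces forces every subset of $I$ into $K_I$, and conversely $K_I = \Delta[\,|I|-1\,]$ forces $I \in K_I \subseteq K$. Hence the non-contractible summands are indexed exactly by the simplices $I \in K$, each such summand being $\widehat{X}^{I}$, while every remaining summand is a point.

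Finally I would assemble the pieces. Since all spaces involved are based CW-complexes (so the wedge is well-pointed), wedging with a contractible summand does not change the homotopy type, giving
$$\bigvee_{I \leq [m]} \widehat{Z}(K_I;(\underline{X_I},\underline{A_I})) \simeq \bigvee_{I \in K} \widehat{X}^{I}.$$
Applying $\Sigma$ and composing with the equivalence of Theorem \ref{thm:decompositions.for.general.moment.angle.complexes} produces the asserted homotopy equivalence $\Sigma(Z(K;(\underline{X},\underline{A}))) \to \Sigma(\bigvee_{I \in K}\widehat{X}^{I})$. There is no serious obstacle here — the result follows immediately once the ingredients are in place — and the only points requiring care are the bookkeeping that matches the index set $I \leq [m]$ with the face poset of $K$, and the fact that the discarded summands are genuinely contractible rather than merely acyclic, which is precisely what Theorem \ref{thm:contractible.A} guarantees.
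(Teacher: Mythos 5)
Your proposal is correct and follows exactly the paper's own route: the paper likewise derives this theorem "immediately" by combining the suspension splitting of Theorem \ref{thm:decompositions.for.general.moment.angle.complexes} with Theorem \ref{thm:contractible.A} applied to each full sub-complex $K_I$, noting that the summand $\widehat{Z}(K_I;(\underline{X_I},\underline{A_I}))$ is contractible unless $I$ is a simplex of $K$, in which case it is $\widehat{X}^I$. Your additional care in verifying that $K_I = \Delta[\,|I|-1\,]$ if and only if $I \in K$ is precisely the bookkeeping the paper summarizes by saying the simplices of $K$ are identified with certain increasing sequences $I \leq [m]$.
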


In the situation where $(X_i,A_i,x_i)=(X,A,x_0)$ for all $i$, the
next result follows at once.
\begin{cor}\label{cor:same.f.vectors}
If $X_i = X$ and all of the $A_i$ are contractible, with $K$ and
$K'$ simplicial complexes both having $m$ vertices as well as the
same number of simplices in every dimension, then there is a
homotopy equivalence
$$\Sigma(Z(K;(X,A))) \to \Sigma(Z(K';(X,A))).$$
\end{cor}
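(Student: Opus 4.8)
The plan is to reduce both sides to the same wedge of smash products by invoking Theorem \ref{thm:more.contractible.A} once for $K$ and once for $K'$, and then to observe that under the hypothesis $X_i = X$ the resulting wedge depends only on the number of simplices in each dimension. Thus I would first apply Theorem \ref{thm:more.contractible.A} to the family $(\underline{X},\underline{A})$ with $K$, and separately with $K'$. Since all $A_i$ are contractible, this produces pointed homotopy equivalences
$$\Sigma(Z(K;(X,A))) \simeq \Sigma\Bigl(\bigvee_{I \in K} \widehat{X}^I\Bigr) \quad\text{and}\quad \Sigma(Z(K';(X,A))) \simeq \Sigma\Bigl(\bigvee_{I \in K'} \widehat{X}^I\Bigr),$$
where $I$ ranges over the non-empty simplices of $K$, respectively of $K'$.

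Next, because $X_i = X$ for every $i$, the smash product $\widehat{X}^I = X_{i_1} \wedge \cdots \wedge X_{i_k}$ coincides with the iterated smash $X^{\wedge |I|}$, and so depends only on the length $|I|$ and not on the particular vertex set $I$. Grouping the wedge according to the length of $I$, I would rewrite
$$\bigvee_{I \in K} \widehat{X}^I \;\cong\; \bigvee_{k \geq 1} \bigl(X^{\wedge k}\bigr)^{\vee f_{k-1}(K)},$$
where $f_{k-1}(K)$ denotes the number of $(k-1)$-dimensional simplices of $K$, that is, the number of length-$k$ sequences $I \in K$; the identical formula holds with $K'$ in place of $K$.

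The hypothesis that $K$ and $K'$ carry the same number of simplices in every dimension says precisely that $f_{k-1}(K) = f_{k-1}(K')$ for all $k \geq 1$. Hence the two right-hand wedges are homeomorphic, and composing the homotopy equivalences above yields
$$\Sigma(Z(K;(X,A))) \simeq \Sigma(Z(K';(X,A))).$$
There is no genuine obstacle in this argument; the only point requiring care is the collapse $\widehat{X}^I = X^{\wedge |I|}$, which is exactly what makes the wedge of Theorem \ref{thm:more.contractible.A} remember precisely the $f$-vector of $K$ and nothing more once the spaces $X_i$ are all equal.
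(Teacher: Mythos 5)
Your proposal is correct and is precisely the argument the paper intends: the corollary is stated as following "at once" from Theorem \ref{thm:more.contractible.A}, and your reduction to the wedge $\bigvee_{I \in K} \widehat{X}^I$, the collapse $\widehat{X}^I = X^{\wedge |I|}$, and the matching of $f$-vectors simply spell out that deduction. No gaps; the bookkeeping with the empty simplex and the grouping by $|I|$ is handled correctly.
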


\begin{remark}\label{remark:f.vectors}
The condition in Corollary \ref{cor:same.f.vectors} that two
simplicial complexes $K$ and $K'$ both have $m$ vertices as well as
the same number of simplices in every dimension is the definition of
having {\it the same $f$-vectors}.
\end{remark}

Let $\overline{P}(Y)$ denote the reduced Poincar\'e series for a
finite, connected CW-complex.
\begin{cor}\label{cor:poincare}
If $(X,A)$ is a pair of finite CW-complexes with $A$ contractible,
and $|K|$ is connected, then
$$\overline{P} (Z(K;(X,A)) = \sum^n_{k = 0}f_k(\overline{P}(X))^{k+1}$$
where $n$ is the dimension of $K$, and $f_k$ is the number of
$k$-simplices of $K$.
\end{cor}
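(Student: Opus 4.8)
The plan is to derive Corollary \ref{cor:poincare} directly from Theorem \ref{thm:more.contractible.A}, which asserts a homotopy equivalence $\Sigma(Z(K;(X,A))) \simeq \Sigma(\bigvee_{I \in K} \widehat{X}^I)$ whenever $A$ is contractible. Since reduced Poincar\'e series are homotopy invariants and are unchanged by suspension, I would first observe that $\overline{P}(Z(K;(X,A))) = \overline{P}(\bigvee_{I \in K} \widehat{X}^I)$. The reduced Poincar\'e series of a wedge is the sum of the reduced Poincar\'e series of the summands, so the problem reduces to computing $\overline{P}(\widehat{X}^I)$ for each simplex $I$ of $K$ and then organizing the sum by the cardinality of $I$.

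The key computation is the behavior of the reduced Poincar\'e series under smash products. First I would recall that for pointed finite CW-complexes, reduced homology of a smash product satisfies $\widetilde{H}_*(X \wedge Y) \cong \widetilde{H}_*(X) \otimes \widetilde{H}_*(Y)$ (after reducing to field coefficients or invoking the K\"unneth theorem, using that $A$ being contractible keeps everything connected so there are no low-dimensional anomalies). Consequently the reduced Poincar\'e series is multiplicative: $\overline{P}(\widehat{X}^I) = \prod_{j} \overline{P}(X_{i_j}) = (\overline{P}(X))^{|I|}$, since all factors equal $X$ and $\widehat{X}^I$ is the $|I|$-fold smash power of $X$. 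Here the hypothesis that $|K|$ is connected guarantees $X$ is connected (every vertex lies in a simplex, so each individual $X$ appears), which is what makes the reduced Poincar\'e series well-behaved and avoids any constant terms interfering with the product formula.

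With the per-summand computation in hand, I would assemble the total sum by grouping the simplices $I \in K$ according to their length. A simplex $I$ with $|I| = k+1$ is precisely a $k$-simplex of $K$, and there are $f_k$ of these by definition. Each contributes $(\overline{P}(X))^{k+1}$. Summing over all nonempty simplices (the functor $\widehat{X}^I$ only ranges over nonempty $I \in K$ in the reduced setting, matching the reduced Poincar\'e series) yields
\[
\overline{P}(Z(K;(X,A))) = \sum_{I \in K} (\overline{P}(X))^{|I|} = \sum_{k=0}^n f_k (\overline{P}(X))^{k+1},
\]
where $n = \dim K$. This is exactly the claimed formula.

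The only genuine subtlety—the step I would treat most carefully—is the multiplicativity of the reduced Poincar\'e series across the smash product, since the K\"unneth formula for reduced homology of a smash requires the connectivity and finiteness hypotheses and is cleanest over a field; one must confirm that $\overline{P}(X \wedge Y) = \overline{P}(X)\,\overline{P}(Y)$ holds as stated (equivalently that there is no spurious degree-zero contribution because the smash product of connected complexes is connected). Everything else is bookkeeping: the homotopy and suspension invariance of $\overline{P}$, the additivity over wedges, and the combinatorial reindexing of simplices by dimension via the $f$-vector.
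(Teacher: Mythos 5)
Your proposal is correct and is essentially the paper's own (implicit) argument: Corollary \ref{cor:poincare} is stated there as an immediate consequence of Theorem \ref{thm:more.contractible.A}, using exactly your three ingredients --- additivity of $\overline{P}$ over the wedge, multiplicativity of $\overline{P}$ over smash powers via the K\"unneth theorem, and grouping the nonempty simplices $I\in K$ by $|I|=k+1$ to produce $\sum_k f_k(\overline{P}(X))^{k+1}$. Two harmless slips worth fixing: suspension multiplies $\overline{P}$ by $t$ rather than leaving it unchanged (the shift cancels because both sides of the equivalence are suspended), and connectivity of $|K|$ does not imply connectivity of $X$ --- connectivity of $X$ is a standing hypothesis of the theorems being invoked and is implicit in the paper's definition of $\overline{P}$ for finite, connected CW-complexes.
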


When specialized to $(\underline{X},\underline{A})$ for which the
spaces $X_i$ are contractible, there is a precise identification of
$\bigvee_{I \leq [m]}
\widehat{Z}(K_I;(\underline{X_I},\underline{A_I}))$ obtained by
appealing to the ``Wedge Lemma" in work of
Welker-Ziegler-\v{Z}ivaljevi\'c in \cite{welker.ziegler.zivaljevic}
on homotopy colimits of spaces. The next theorem addresses the case
for which all of the $X_{i}$ are contractible. Recall the join of
two spaces $X*Y$ as well as the notation
$\widehat{A}^{[m]}=A_1\wedge\ldots\wedge A_m$ from Definition
\ref{defin:smash.products}.
\begin{thm}\label{T2.19}
Let $K$ be an abstract simplicial complex with $m$ vertices and let
$$(\underline{X},\underline{A})=\{(X_i,A_i,x_i)\}^m_{i=1}$$ be a family
of connected, pointed CW-pairs. If all of the $X_i$ are
contractible, then there are homotopy equivalences
$$\widehat{Z}(K;(\underline{X},\underline{A}))\to |K|\ast\widehat{A}^{[m]}
\to \Sigma (|K|\wedge\widehat{A}^{[m]}).$$
\end{thm}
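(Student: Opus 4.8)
The plan is to derive Theorem \ref{T2.19} as a special case of the null-homotopic decomposition in Theorem \ref{T:1.2}, specialized to the situation where all $X_i$ are contractible. The first observation is that when $X_i$ is contractible, the inclusion $A_i \hookrightarrow X_i$ is automatically null-homotopic, so the hypotheses of Theorem \ref{T:1.2} are satisfied and we obtain the wedge decomposition
$$\widehat{Z}(K;(\underline{X},\underline{A})) \to \bigvee_{\sigma \in K} |\Delta(\overline{K}_{<\sigma})| * \widehat{D}(\sigma).$$
The strategy is then to evaluate each summand using contractibility of the $X_i$ and to reassemble the resulting wedge into a single join $|K| * \widehat{A}^{[m]}$.

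The key computation is the identification of $\widehat{D}(\sigma)$ for each $\sigma \in K$. Recall $\widehat{D}(\sigma) = Y_1 \wedge \ldots \wedge Y_m$ with $Y_i = X_i$ for $i \in \sigma$ and $Y_i = A_i$ for $i \notin \sigma$. Since any smash product containing a contractible factor is itself contractible, $\widehat{D}(\sigma)$ is contractible whenever $\sigma$ is nonempty, because then at least one factor is some $X_i$. The sole exception is $\sigma = \emptyset$, for which $\widehat{D}(\emptyset) = A_1 \wedge \ldots \wedge A_m = \widehat{A}^{[m]}$. Therefore in the wedge all summands with $\sigma \neq \emptyset$ collapse (a join $* \, C$ with $C$ contractible is contractible), and only the $\sigma = \emptyset$ term survives, contributing $|\Delta(\overline{K}_{<\emptyset})| * \widehat{A}^{[m]}$.

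It remains to identify the factor $|\Delta(\overline{K}_{<\emptyset})|$. By Definition \ref{defin:full.subcomplexes.and.realization}, $\overline{K}_{<\emptyset} = \{\tau \in \overline{K} \mid \tau < \emptyset\} = \{\tau \in K \mid \tau \supsetneq \emptyset\}$, which is precisely the set of nonempty simplices of $K$, ordered by reverse inclusion. The order complex $\Delta$ of this poset is the barycentric subdivision $K'$ of $K$ (the subdivision omits only the cone point corresponding to $\emptyset$, whose removal from $\Delta(\overline{K}) = \mathrm{cone}(K')$ recovers $K'$ as noted in Definition \ref{defin:order.complex}). Hence $|\Delta(\overline{K}_{<\emptyset})| = |K'| \cong |K|$, and the surviving summand is $|K| * \widehat{A}^{[m]}$. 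This gives the first homotopy equivalence; the second, $|K| * \widehat{A}^{[m]} \to \Sigma(|K| \wedge \widehat{A}^{[m]})$, is immediate from the standard relation $X * Y \simeq \Sigma(X \wedge Y)$ recorded in Definition \ref{defin:smash.products}.

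The step I expect to require the most care is the bookkeeping around the empty simplex and the precise poset identification $|\Delta(\overline{K}_{<\emptyset})| \simeq |K|$. The reverse-inclusion convention on $\overline{K}$ and the requirement that $\emptyset \in K$ make it easy to miscount which simplex is the unique maximal element, so I would verify explicitly that the poset of nonempty simplices under reverse inclusion has order complex equal to the barycentric subdivision, using the cone description in Definition \ref{defin:order.complex}. The collapsing of the contractible summands is routine once one invokes that a join with a contractible space is contractible, so the genuine content lies entirely in isolating the unique surviving term and correctly computing its join factor.
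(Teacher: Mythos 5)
Your proof is correct, but it takes a genuinely different (and shorter) route than the paper's own proof in Section 7. You derive the theorem as a corollary of Theorem \ref{T:1.2}: contractibility of each $X_i$ forces every inclusion $A_i \hookrightarrow X_i$ to be null-homotopic, each summand $|\Delta(\overline{K}_{<\sigma})| * \widehat{D}(\sigma)$ with $\sigma \neq \emptyset$ is contractible because $\widehat{D}(\sigma)$ contains a contractible smash factor, and the unique surviving summand $|\Delta(\overline{K}_{<\emptyset})| * \widehat{A}^{[m]}$ is identified with $|K| * \widehat{A}^{[m]}$ via the barycentric subdivision. The paper never invokes Theorem \ref{T:1.2} here; instead it re-runs the homotopy colimit machinery directly: it introduces the diagram $\widehat{F}$ over $\overline{K}$ whose value is $\widehat{A}^{[m]}$ at $\emptyset$ and a point at every other $\omega$, with all diagram maps constant, observes that the evident map of diagrams $\widehat{D} \to \widehat{F}$ is an objectwise homotopy equivalence precisely because the $X_i$ are contractible, and then applies the Homotopy Lemma (Theorem \ref{T:3.4}) and the Projection Lemma (Theorem \ref{thm:hocolim.to.colim}) to obtain $\widehat{Z}(K;(\underline{X},\underline{A})) \simeq \mathrm{hocolim}(\widehat{F}) \simeq |K| * \widehat{A}^{[m]}$, the join emerging directly from the structure of the homotopy colimit of $\widehat{F}$ rather than from collapsing a wedge. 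The two arguments are close relatives --- Theorem \ref{T:1.2} is itself proved from the same Welker--Ziegler--\v{Z}ivaljevi\'c lemmas --- so what yours buys is economy, reusing a stated theorem at the cost of the extra step of collapsing contractible wedge summands (routine for CW-complexes, where well-pointedness holds, but worth saying), while the paper's version buys a self-contained argument in which no collapsing step is needed. It is worth noting that the expository paragraph immediately following the statement of Theorem \ref{T2.19} in Section 2 sketches exactly your argument, so your route is one the authors themselves indicate, even though their formal proof proceeds differently.
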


Since all of the $X_i$ are contractible, all of the
$\widehat{D}(\sigma)$ are contractible with the possible exception
of $\widehat{D}(\emptyset)=A_1\wedge\ldots\wedge A_m$. Since the
element $\emptyset$ is the maximal element under reverse inclusion
in $\bar{K}$, the order complex $\Delta(K_{<\emptyset})$ is equal to
the barycentric subdivision of $K$ denoted $SdK$. Furthermore, there
are homeomorphisms on the level of geometric realizations
$|\Delta(\overline{K})|\to |SdK|\to |K|$.

\begin{remark}\label{remark:david.stone}
For the case $(X,A) = (D^2,S^1)$ and $K$ is a simplicial complex
with $m$ vertices, David Stone \cite{stone} has shown that the
homotopy equivalence $$\widehat{Z}(K;(D^2, S^1)) \to
\Sigma^{m+1}|K|$$ given by Theorem \ref{T2.19}  extends to a
homeomorphism. He does this by ``linearizing" a smash product of
discs using joins. This result may be interpreted as giving, for any
simplicial complex K on $m$-vertices, a model of $\Sigma^{m+1}|K|$
inside an $m$-fold smash product of two-discs which preserves the
combinatorial structure of $K$ in a natural way.
\end{remark}

The next result is now a consequence of Theorems
\ref{thm:decompositions.for.general.moment.angle.complexes} and
\ref{T2.19}.

\begin{thm} \label{thm:null.homotopy.for.A.to.X}
If all of the $X_i$ in $(\underline{X},\underline{A})$ are
contractible with $X_i$ and $A_i$ closed CW-complexes for all $i$,
there is a homotopy equivalence $$\Sigma Z(K;
(\underline{X},\underline{A})) \to \Sigma (\bigvee_{I \notin K}
|K_I|*\widehat{A}^I).$$
\end{thm}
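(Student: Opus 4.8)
The plan is to combine the two major splitting theorems already established, namely Theorem~\ref{thm:decompositions.for.general.moment.angle.complexes} and Theorem~\ref{T2.19}. Since all of the $X_i$ are contractible, Theorem~\ref{thm:decompositions.for.general.moment.angle.complexes} gives a natural homotopy equivalence
$$\Sigma(Z(K;(\underline{X},\underline{A}))) \to \Sigma\Bigl(\bigvee_{I \leq [m]} \widehat{Z}(K_I;(\underline{X_I},\underline{A_I}))\Bigr),$$
so it suffices to identify each wedge summand $\widehat{Z}(K_I;(\underline{X_I},\underline{A_I}))$ up to homotopy. The key observation is that the full subcomplex $K_I$ of $K$ on the vertex set $I$ has $|I|$ vertices, and the restricted family $(\underline{X_I},\underline{A_I})$ still consists entirely of contractible $X_{i_j}$. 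Hence Theorem~\ref{T2.19} applies verbatim to each $K_I$, yielding homotopy equivalences
$$\widehat{Z}(K_I;(\underline{X_I},\underline{A_I})) \to |K_I| \ast \widehat{A}^{I}.$$

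First I would substitute these equivalences into the big wedge and rewrite the target as $\Sigma(\bigvee_{I \leq [m]} |K_I| \ast \widehat{A}^I)$. Next I would use the convention recalled in Definition~\ref{defin:smash.products} that for spaces of CW-type the join $X \ast Y$ is homotopy equivalent to $\Sigma(X \wedge Y)$, together with the fact that a wedge commutes with suspension, to absorb one further suspension coordinate if desired; but the cleaner statement keeps the join form. The remaining task is then purely combinatorial: to discard the summands that contribute nothing. When $K_I$ is itself a simplex (equivalently, when $I$ is a face of $K$, i.e.\ $I \in K$), the realization $|K_I|$ is contractible, so the join $|K_I| \ast \widehat{A}^I$ is contractible and drops out of the wedge after suspension. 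Conversely, when $I \notin K$ the subcomplex $K_I$ is a proper, non-contractible subcomplex and the summand survives. This is exactly the indexing set $I \notin K$ appearing in the statement, and it matches the behavior already noted in the discussion following Theorem~\ref{thm:contractible.A}.

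I would therefore organize the proof in three steps: (1) invoke Theorem~\ref{thm:decompositions.for.general.moment.angle.complexes} to split the suspension into a wedge indexed by $I \leq [m]$; (2) apply Theorem~\ref{T2.19} to each full subcomplex $K_I$, using that contractibility of the $X_i$ is inherited by every subfamily, to replace each summand by $|K_I| \ast \widehat{A}^I$; and (3) remove the contractible summands, which are precisely those with $I \in K$ (where $|K_I|$ is a cone on a simplex), leaving the wedge over $I \notin K$.

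The step I expect to require the most care is the bookkeeping in (3), specifically the claim that $|K_I|$ is contractible exactly when $I \in K$ and that one is free to delete a contractible wedge summand after a single suspension. The deletion is harmless because a contractible space suspends to a contractible space and $\Sigma(\bigvee A_i) \simeq \bigvee \Sigma A_i$, but I would want to confirm the edge cases—most notably the empty subsequence and singleton subsequences—are handled consistently with the paper's convention that $\phi \in K$, so that the surviving index set is genuinely $\{I : I \notin K\}$ rather than off by these trivial faces. Naturality of the equivalence follows from the naturality already asserted in Theorems~\ref{thm:decompositions.for.general.moment.angle.complexes} and~\ref{T2.19}, so no additional argument is needed there.
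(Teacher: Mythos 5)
Your proposal is correct and follows essentially the same route as the paper's own proof: split the suspension via Theorem~\ref{thm:decompositions.for.general.moment.angle.complexes}, identify each summand $\widehat{Z}(K_I;(\underline{X_I},\underline{A_I}))$ as $|K_I|\ast\widehat{A}^I$ via Theorem~\ref{T2.19}, and discard the summands with $I \in K$, for which $|K_I|$ is a simplex and hence the join is contractible. One minor side-remark in your step (3) is inaccurate but harmless: for $I \notin K$ the complex $K_I$ can perfectly well be contractible (e.g.\ when $K_I$ is a path), yet this is never needed, since the argument only requires that the summands indexed by $I \in K$ be contractible so they may be deleted after suspension.
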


\begin{remark}\label{remark:remark}
An important case of Theorem \ref{thm:null.homotopy.for.A.to.X} is
given by setting $X_i$ equal to the cone on $A_i$.
\end{remark}

Since $D^2$ is contractible, the next corollary follows at once from
Theorem \ref{thm:null.homotopy.for.A.to.X}.
\begin{cor}\label{cor:C2.5}
Let $(X_i, A_i, x_i)$ denote the triple $(D^2,S^1, *)$ for all $i$.
Then there are homotopy equivalences
$$\Sigma (Z(K;(D^2,S^1)))\to  \Sigma (\bigvee_{I \notin K}
|K_I|*S^{|I|})\to \bigvee_{I \notin K}\Sigma^{2 + |I|}|K_I|.$$
\end{cor}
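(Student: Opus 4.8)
The plan is to deduce the corollary directly from Theorem \ref{thm:null.homotopy.for.A.to.X} by specializing to the pair $(D^2,S^1)$ and then simplifying the wedge summands. First I would observe that $(D^2,S^1)$ satisfies the hypotheses of Theorem \ref{thm:null.homotopy.for.A.to.X}: the disc $D^2$ is contractible, and both $D^2$ and $S^1$ are closed CW-complexes. Taking $(X_i,A_i,x_i)=(D^2,S^1,*)$ for every $i\in[m]$ therefore yields a homotopy equivalence
$$\Sigma Z(K;(D^2,S^1)) \to \Sigma\bigl(\bigvee_{I \notin K} |K_I| \ast \widehat{A}^I\bigr),$$
where $\widehat{A}^I = A_{i_1}\wedge\cdots\wedge A_{i_k}$ for $I=(i_1,\ldots,i_k)$.

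Next I would identify each smash factor $\widehat{A}^I$. Since $A_i=S^1$ for all $i$, the iterated smash product is $\widehat{A}^I = S^1\wedge\cdots\wedge S^1$ with $|I|$ factors, and because $S^p\wedge S^q\cong S^{p+q}$ this is homeomorphic to $S^{|I|}$. Substituting this into the equivalence above gives the first displayed map of the corollary,
$$\Sigma Z(K;(D^2,S^1)) \to \Sigma\bigl(\bigvee_{I \notin K} |K_I| \ast S^{|I|}\bigr).$$

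For the second equivalence I would invoke the join-to-suspension identity recorded in Definition \ref{defin:smash.products} together with the standard fact that smashing with a sphere is iterated reduced suspension. For each $I$ the realization $|K_I|$ is a nonempty CW-complex — nonempty because minimality of $[m]$ forces every vertex $i\in I$ to lie in $K$, so $K_I$ contains the vertices of $I$ — and hence, after choosing a basepoint, $|K_I|$ is a pointed space of CW homotopy type. Thus $|K_I|\ast S^{|I|}\simeq\Sigma(|K_I|\wedge S^{|I|})$, and since $|K_I|\wedge S^{|I|}\cong\Sigma^{|I|}|K_I|$ we obtain $|K_I|\ast S^{|I|}\simeq\Sigma^{1+|I|}|K_I|$. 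Applying one further suspension and taking the wedge over all $I\notin K$ produces
$$\Sigma\bigl(\bigvee_{I \notin K} |K_I| \ast S^{|I|}\bigr) \simeq \bigvee_{I \notin K} \Sigma^{2+|I|} |K_I|,$$
which is the second map of the corollary.

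The argument is almost entirely formal, so I do not expect a serious obstacle; the only point requiring a little care is the bookkeeping of basepoints, namely verifying that the join-to-suspension identity applies uniformly to every summand. This reduces to checking that each $|K_I|$ is a nonempty, pointed CW-complex, which follows from the minimality assumption on $[m]$ as noted above, and the suspension present throughout the corollary makes the choice of basepoint immaterial up to homotopy.
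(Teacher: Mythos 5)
Your proposal is correct and follows exactly the paper's own route: the paper deduces Corollary \ref{cor:C2.5} ``at once'' from Theorem \ref{thm:null.homotopy.for.A.to.X} by noting $D^2$ is contractible, identifying $\widehat{A}^I = S^1\wedge\cdots\wedge S^1 \cong S^{|I|}$, and converting the join to a suspension. Your additional bookkeeping (the nonemptiness of $|K_I|$ via minimality of $[m]$, and the basepoint issue for the join-to-suspension identity) is a sound elaboration of details the paper leaves implicit.
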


Notice that the last corollary implies a decomposition for the
homology of $Z(K;(D^2,S^1))$ for any homology theory. The associated
additive decomposition for singular homology is due to Hochster
\cite{hochster}, Goresky-MacPherson \cite{goresky.macpherson},
Buchstaber-Panov \cite{buchstaber.panov}, Jewell \cite{jewell}, and
others. An analogous consequence is given next.

\begin{cor}\label{cor:C2.7}
Let $(X_i, A_i, x_i)$ denote the triple $(D^{n+1},S^n, *)$ for all
$i$. Then there are homotopy equivalences
$$\Sigma (Z(K;(D^{n+1},S^n)))\to
\Sigma (\bigvee_{I \notin K} |K_I|*S^{n|I|}) \to \bigvee_{I \notin
K} \Sigma^{2 + n|I|}|K_I|.$$
\end{cor}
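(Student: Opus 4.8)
The plan is to derive this corollary as a direct specialization of Theorem~\ref{thm:null.homotopy.for.A.to.X}, exactly parallel to the way Corollary~\ref{cor:C2.5} is obtained; indeed Corollary~\ref{cor:C2.5} is precisely the case $n=1$ of the present statement. First I would verify the hypotheses of Theorem~\ref{thm:null.homotopy.for.A.to.X}: setting $(X_i,A_i,x_i)=(D^{n+1},S^n,*)$ for every $i$, the space $D^{n+1}$ is contractible and both $D^{n+1}$ and $S^n$ are closed CW-complexes, so the theorem applies and yields the pointed homotopy equivalence
$$\Sigma Z(K;(\underline{X},\underline{A}))\to \Sigma\Bigl(\bigvee_{I\notin K}|K_I|*\widehat{A}^I\Bigr).$$

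Next I would evaluate the smash factors $\widehat{A}^I$. By Definition~\ref{defin:smash.products}, $\widehat{A}^I=A_{i_1}\wedge\cdots\wedge A_{i_k}$ with $k=|I|$, and here each $A_{i_j}=S^n$. Using the standard identification $S^p\wedge S^q\cong S^{p+q}$ iterated over the $|I|$ factors gives $\widehat{A}^I\cong S^{\,n|I|}$. Substituting this into the equivalence above produces the first arrow of the corollary, namely $\Sigma Z(K;(D^{n+1},S^n))\to \Sigma\bigl(\bigvee_{I\notin K}|K_I|*S^{\,n|I|}\bigr)$.

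For the second arrow I would invoke the join formula recorded in Definition~\ref{defin:smash.products}: for pointed spaces of CW homotopy type one has $X*Y\simeq \Sigma(X\wedge Y)$. Applying this with $X=|K_I|$ and $Y=S^{\,n|I|}$, together with the fact that smashing with $S^{\,n|I|}$ is the $n|I|$-fold reduced suspension, gives $|K_I|*S^{\,n|I|}\simeq \Sigma\bigl(\Sigma^{n|I|}|K_I|\bigr)=\Sigma^{1+n|I|}|K_I|$. Since reduced suspension commutes with the (finite) wedge indexed by the subsequences $I\leq[m]$ with $I\notin K$, one more suspension yields $\Sigma\bigl(\bigvee_{I\notin K}|K_I|*S^{\,n|I|}\bigr)\simeq \bigvee_{I\notin K}\Sigma^{2+n|I|}|K_I|$, which is exactly the target of the second arrow.

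Assembling the two equivalences completes the proof. I do not expect a genuine obstacle here: the argument is essentially bookkeeping on top of Theorem~\ref{thm:null.homotopy.for.A.to.X}. The only points requiring a little care are the correct identification $\widehat{A}^I\cong S^{\,n|I|}$ and the clean application of the join identity $X*Y\simeq\Sigma(X\wedge Y)$ (valid because $|K_I|$ and $S^{\,n|I|}$ have CW homotopy type), after which the exponent $2+n|I|$ is forced by counting the two external suspensions against the $n|I|$ suspensions coming from the sphere smash factor.
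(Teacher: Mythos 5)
Your proposal is correct and follows exactly the route the paper intends: the paper derives Corollary \ref{cor:C2.7} (like Corollary \ref{cor:C2.5}, which is indeed the $n=1$ case) as an immediate specialization of Theorem \ref{thm:null.homotopy.for.A.to.X}, using the contractibility of $D^{n+1}$, the identification $\widehat{A}^I \cong S^{n|I|}$, and the join identity $X*Y \simeq \Sigma(X\wedge Y)$. Your bookkeeping of the suspensions giving the exponent $2+n|I|$ matches the paper's statement precisely.
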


The cohomology ring structure is given by Panov \cite{panov}, a
feature that does not follow from the above corollary.

\begin{remark}\label{remark:arrangements.macs}
A simplicial complex $K$ determines the complement of a complex
coordinate subspace arrangement as outlined in
\cite{buchstaber.panov}. This space is $Z(K;(\C, \C^{*}))$ which has
$Z(K;(D^{2},S^1))$ as a strong deformation retract
\cite{buchstaber.panov}. Similarly, $Z(K;(\R^{n+1}, \R^{n+1} -
\{{0}\}))$ is the complement of a certain arrangement of real
subspaces having $Z(K;(D^{n+1},S^n))$ as a strong deformation
retract.
\end{remark}

Shifted simplicial complexes are defined next.
\begin{defin}\label{defin:shifted}
A simplicial complex on $m$ vertices is {\it shifted} if there
exists a labeling of the vertices by $1$ through $m$ such that for
any face, replacing any vertex of that face with a vertex of smaller
label and not in that face results in a collection which is also a
face. For a shifted complex, the geometric realization of $K$ and
every $K_I$ is homotopy equivalent to a wedge of spheres.
\end{defin}

The next corollary follows.
\begin{cor}\label{cor:C2.8}
If $K$ is a shifted complex, then $\Sigma Z(K;(D^{n+1}, S^n))$ is
homotopy equivalent to a wedge of spheres.
\end{cor}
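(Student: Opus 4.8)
The plan is to combine Corollary~\ref{cor:C2.7} with the structural property of shifted complexes recorded in Definition~\ref{defin:shifted}. By Corollary~\ref{cor:C2.7}, applied with $(X_i,A_i,x_i)=(D^{n+1},S^n,*)$, there is a homotopy equivalence
$$\Sigma (Z(K;(D^{n+1},S^n))) \to \bigvee_{I \notin K} \Sigma^{2 + n|I|}|K_I|,$$
so the suspension of the moment-angle complex is, up to homotopy, a wedge of iterated suspensions of the geometric realizations of the full subcomplexes $K_I$. The entire statement therefore reduces to understanding the homotopy type of each $|K_I|$.

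The key observation is that a full subcomplex of a shifted complex is again shifted. First I would verify this closure property: if $K$ is shifted with respect to the labeling $1,\ldots,m$ and $I\leq[m]$, then $K_I$ inherits a shifting from the induced linear order on the vertices of $I$. Indeed, the defining condition of Definition~\ref{defin:shifted}---that replacing a vertex of a face by a smaller unused vertex yields a face---is preserved under passing to $K_I=\{\sigma\cap I\mid \sigma\in K\}$, since both the face $\sigma\cap I$ and the candidate replacement vertex lie in $I$, and the swapped-in face produced inside $K$ automatically has all its vertices in $I$. Granting this, Definition~\ref{defin:shifted} tells us directly that each $|K_I|$ is homotopy equivalent to a wedge of spheres, say $|K_I|\simeq \bigvee_{j} S^{d_{I,j}}$.

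Substituting this into the wedge decomposition, each summand becomes
$$\Sigma^{2+n|I|}|K_I| \simeq \Sigma^{2+n|I|}\Big(\bigvee_j S^{d_{I,j}}\Big) \simeq \bigvee_j S^{\,2+n|I|+d_{I,j}},$$
using that suspension commutes with wedges and that $\Sigma^{k}S^{d}=S^{d+k}$. Assembling the double wedge over all $I\notin K$ and all $j$ then exhibits $\Sigma Z(K;(D^{n+1},S^n))$ as a wedge of spheres, which is exactly the assertion of the corollary. The only mild bookkeeping point is that a full subcomplex of a shifted complex may be empty or a simplex, in which cases $|K_I|$ is contractible or a sphere of appropriate dimension; these degenerate summands are still (trivially) wedges of spheres, so they cause no difficulty.

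The step I expect to require the most care is the closure property, namely that $K_I$ is shifted whenever $K$ is. Definition~\ref{defin:shifted} already asserts that \emph{every} $K_I$ of a shifted complex has the homotopy type of a wedge of spheres, so in principle one may invoke that sentence directly; however, the cleanest self-contained route is to prove the combinatorial closure statement and then appeal to the wedge-of-spheres conclusion for shifted complexes. The main subtlety is checking that the induced labeling on $I$ does not break the shifting condition---one must confirm that the smaller-label replacement vertex required by shiftedness can always be taken inside $I$, which follows because the condition is tested only against vertices of the face in question, all of which already lie in $I$.
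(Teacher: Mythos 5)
Your proposal is correct and follows essentially the same route as the paper: the paper derives this corollary directly from Corollary~\ref{cor:C2.7} together with the fact, recorded in Definition~\ref{defin:shifted}, that for a shifted complex every full subcomplex $|K_I|$ is homotopy equivalent to a wedge of spheres, so each summand $\Sigma^{2+n|I|}|K_I|$ is a wedge of spheres. Your additional verification that full subcomplexes of shifted complexes are themselves shifted is a sound (and welcome) justification of the claim the paper simply asserts inside that definition.
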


\begin{remark}\label{remark:grbic.theriault}
In the special case for which $K$ is a shifted complex, a stronger
result than that of Corollary \ref{cor:C2.8} was proven by
Grbic-Theriault \cite{grbic.theriault}. They prove that $Z(K;(D^{2},
S^1))$ is homotopy equivalent to a wedge of spheres without
suspending.
\end{remark}

The theorems of Porter and Grbic-Theriault  support the following.
\begin{conjecture}\label{conjecture:conjecture}
If $K$ is a shifted complex, then
$$Z(K;(\underline{CX},\underline{X}))\quad{\rm and}\quad\bigvee_{I\notin K}|K_I| *\widehat{X}^I$$
are of the same homotopy type.
\end{conjecture}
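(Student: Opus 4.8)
The plan is to argue by induction on the number of vertices $m$, using the recursive structure of shifted complexes to reduce to a statement about full subcomplexes and their realizations. Order the vertices so that the shifting condition of Definition \ref{defin:shifted} holds, let $m$ be the top vertex, and write $L=\mathrm{lk}_K(m)$ and $\mathrm{del}(m)=K_{[m-1]}$. Both are shifted complexes on $[m-1]$, and the shifting condition forces the containment $L\subseteq\mathrm{del}(m)$. Writing $K$ on $[m]$ as the union of $\mathrm{del}(m)$ and the cone $\{m\}\ast L$, which intersect in $L$, the coordinate-wise nature of the polyhedral product produces a homotopy pushout with corners $Z(\mathrm{del}(m))\times X_m$, the contractible space $CX_m\times Z(L)$, and $Z(L)\times X_m$; the map to the first corner is induced by $L\hookrightarrow\mathrm{del}(m)$ crossed with $\mathrm{id}_{X_m}$, and the map to the second is the cone inclusion $X_m\hookrightarrow CX_m$ crossed with $\mathrm{id}_{Z(L)}$. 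By the inductive hypothesis, $Z(\mathrm{del}(m))$ and $Z(L)$ are homotopy equivalent to wedges of the join terms $|(\,\cdot\,)_J|\ast\widehat{X}^J$, each of which is a suspension since $P\ast Q\simeq\Sigma(P\wedge Q)$.

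The combinatorial heart of the argument is the following lemma, which I can prove directly: \emph{for shifted $K$ and every nonempty $J\subseteq[m-1]$, the inclusion $|L_J|\hookrightarrow|K_J|$ is null-homotopic.} Indeed, fix $\sigma\in L_J$ and set $j_0=\min J$. If $j_0\in\sigma$ then $\sigma\cup\{j_0\}=\sigma\in K_J$; if $j_0\notin\sigma$ then, since $\sigma\in L$ gives $\{m\}\cup\sigma\in K$ and $j_0<m$ with $j_0\notin\{m\}\cup\sigma$, the shifting condition yields $(\{m\}\cup\sigma\setminus\{m\})\cup\{j_0\}=\sigma\cup\{j_0\}\in K$, hence $\sigma\cup\{j_0\}\in K_J$. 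In either case $\sigma$ lies in the closed star of $j_0$ in $K_J$, which is the cone $j_0\ast\mathrm{lk}_{K_J}(j_0)$ and hence contractible; so the inclusion factors through a contractible subcomplex and is null-homotopic. Using naturality of the inductive decompositions for inclusions of full subcomplexes (maintained as part of the induction), the inclusion $Z(L)\hookrightarrow Z(\mathrm{del}(m))$ is, up to homotopy, a wedge of maps $\big(|L_J|\hookrightarrow|K_J|\big)\ast\mathrm{id}_{\widehat{X}^J}$, each null-homotopic because a join with a null-homotopic map is null-homotopic.

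With this null-homotopy in hand, the remaining step is to feed it into the pushout and match the result with $\bigvee_{I\notin K}|K_I|\ast\widehat{X}^I$, split according to whether $m\in I$. The summands with $m\notin I$ are exactly those of $Z(\mathrm{del}(m))$, since $K_I=(\mathrm{del}(m))_I$ for $I\subseteq[m-1]$. The summands with $m\in I$ should arise from the cone direction $CX_m$: writing $I=\{m\}\cup I'$, one has $|K_I|\simeq|K_{I'}|\cup_{|L_{I'}|}C|L_{I'}|$, the mapping cone of $|L_{I'}|\hookrightarrow|K_{I'}|$, which by the lemma splits as $|K_{I'}|\vee\Sigma|L_{I'}|$; combined with the factor $\widehat{X}^I=\widehat{X}^{I'}\wedge X_m$ coming from the $m$-th coordinate, these reassemble into the join terms indexed by the new non-faces containing $m$.

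The step I expect to be the main obstacle is precisely this last assembly, carried out \emph{unstably}. After a single suspension the equivalence already follows from Theorem \ref{thm:null.homotopy.for.A.to.X}, which holds for every $K$; the entire content of the conjecture is the removal of that suspension under the shifted hypothesis. The difficulty is that the products with $X_m$ appearing in the corners of the pushout do not split off as wedge summands, but must be reorganized into joins (as is already visible in the smallest case, where the pushout is the join $X_1\ast X_2$ rather than a wedge containing a factor $X_2$). Thus null-homotopy of the structure maps is a genuinely unstable input, and converting the stable splitting into an unstable homotopy equivalence requires tracking the cone and join structure with enough naturality that the reorganization is canonical. For the sphere case $(D^2,S^1)$ Grbic--Theriault \cite{grbic.theriault} control this via explicit cell structures and the vanishing of Whitehead products in wedges of spheres; for general pairs $(CX,X)$ the cells are replaced by copies of $X$ and its smashes, so one cannot argue cell by cell, and the required unstable null-homotopies must instead be produced functorially. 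This is the heart of the conjecture and the reason it is not settled by the suspension-level results above.
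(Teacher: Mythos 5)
The statement you were asked to prove is stated in the paper only as Conjecture \ref{conjecture:conjecture}; the paper offers no proof of it, just the remark that it is ``supported by'' the theorems of Porter and of Grbic--Theriault \cite{grbic.theriault}. So there is no argument in the paper to compare yours against, and the only question is whether your proposal settles the conjecture. It does not, and you say so yourself: the final step --- reassembling the homotopy pushout with corners $Z(\mathrm{del}(m))\times X_m$, $Z(L)\times X_m$ and $Z(L)\times CX_m$ into the wedge $\bigvee_{I\notin K}|K_I|\ast\widehat{X}^I$ \emph{without suspending} --- is exactly the content of the conjecture, and your outline stops where that work begins. Two further concrete problems. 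First, the corner $CX_m\times Z(L)$ is not contractible; it is homotopy equivalent to $Z(L)$ (in your own two-point example it is $CX_2\times X_1\simeq X_1$), so the pushout cannot be treated as a mapping cone of the map $Z(L)\times X_m\to Z(\mathrm{del}(m))\times X_m$, and the assembly is strictly harder than your phrasing suggests. Second, the inductive hypothesis you need is not merely that $Z(\mathrm{del}(m))$ and $Z(L)$ split as wedges of joins, but that these splittings can be chosen naturally enough that the inclusion $Z(L)\hookrightarrow Z(\mathrm{del}(m))$ is identified, up to homotopy, with a wedge of maps of the form $\bigl(|L_J|\hookrightarrow|K_J|\bigr)\ast\mathrm{id}_{\widehat{X}^J}$; equivalences produced by the Projection Lemma / Homotopy Lemma machinery of Section \ref{sec:Preparation} do not automatically commute with such inclusions, so this naturality would itself have to be proved and carried through the induction.

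That said, what you do prove is correct and is genuine progress in the right direction. The combinatorial lemma --- for shifted $K$ with top vertex $m$ and $L=\mathrm{lk}_K(m)$, every $\sigma\in L_J$ satisfies $\sigma\cup\{\min J\}\in K_J$, so $|L_J|\hookrightarrow|K_J|$ factors through the closed star of $\min J$ in $K_J$ and is null-homotopic --- is exactly the kind of unstable input a proof must use, and the resulting splitting $|K_{I'\cup\{m\}}|\simeq|K_{I'}|\vee\Sigma|L_{I'}|$ is the correct bookkeeping for the non-faces containing $m$. What is missing is the mechanism that converts these null-homotopies into an unstable splitting of the pushout itself. As you correctly observe, after a single suspension the statement already follows from Theorem \ref{thm:null.homotopy.for.A.to.X} for arbitrary $K$, so any proof of the conjecture must exploit the shifted hypothesis in an essentially unstable way; that step is absent here, and your proposal should be read as a well-organized reduction of the conjecture to that step rather than as a proof.
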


Denham and Suciu prove an elegant Lemma numbered $2.9$ in
\cite{denham.suciu} which relates fibrations and moment-angle
complexes. That Lemma is stated next.
\begin{lem} \label{denham.suciu.fibrations}
Let $p: (E,E') \to (B,B')$ be a map of pairs, such that both $p: E
\to B$ and $p|E' : E' \to B'$ are fibrations, with fibres $F$ and
$F'$, respectively. Suppose that either $F = F'$ or $B = B'$. Then
the product fibration, $p^n : E^n \to  B^n$, restricts to a
fibration  $Z(K;(F,F^{\prime})) \to Z(K;(E,E^{\prime})) \to
Z(K;(B,B^{\prime}))$. Moreover, if $(F, F')\to (E,E') \to (B,B')$ is
a relative bundle (with structure group G),
 then the above bundle has structure
group $G^n$.
\end{lem}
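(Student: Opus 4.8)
\emph{The plan.} I would realize $q := p^{n}|$ as the restriction of the product fibration $p^{n}\colon E^{n}\to B^{n}$ to the subspace $Z(K;(E,E'))\subseteq E^{n}$, and verify the homotopy lifting property for $q$ directly, keeping track of the ``support'' of a point. Throughout I would use the description of the polyhedral product
$$Z(K;(E,E')) = \{\,(e_{1},\dots,e_{n})\in E^{n} \mid \operatorname{supp}(e_{1},\dots,e_{n})\in K\,\},\qquad \operatorname{supp}(e_{1},\dots,e_{n}) := \{\,i \mid e_{i}\notin E'\,\},$$
which is immediate from $D(\sigma)=\prod_{i}Y_{i}$ together with the fact that $K$ is closed under passage to subsets (and, with $E'$ closed in $E$, that the colimit topology agrees with the subspace topology). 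First I would check the easy point that $p^{n}$ restricts to a map $q\colon Z(K;(E,E'))\to Z(K;(B,B'))$: since $p(E)\subseteq B$ and $p(E')\subseteq B'$, the map $p^{n}$ carries $D_{(E,E')}(\sigma)$ into $D_{(B,B')}(\sigma)$ coordinatewise, and taking the union over $\sigma\in K$ produces $q$.

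I would prove the fibration statement by splitting along the two hypotheses. If $F=F'$, the fibre inclusions give $(p|_{E'})^{-1}(b')=p^{-1}(b')$ for every $b'\in B'$, whence $E'=p^{-1}(B')$; the support characterisation then shows $Z(K;(E,E'))=(p^{n})^{-1}\bigl(Z(K;(B,B'))\bigr)$, so that $q$ sits in a pullback square over $p^{n}$. As a pullback of the fibration $p^{n}$, the map $q$ is a fibration, with fibre equal to the fibre $F^{n}=Z(K;(F,F'))$ of $p^{n}$. If instead $B=B'$, then $Z(K;(B,B'))=B^{n}$ and the assertion has genuine content: over a point $(b_{1},\dots,b_{n})$ the fibre of $q$ is $\{\underline e\in\prod_{i}p^{-1}(b_{i})\mid \operatorname{supp}(\underline e)\in K\}$, which is precisely $Z(K;(F,F'))$ for the pairs $\bigl(p^{-1}(b_{i}),\,p^{-1}(b_{i})\cap E'\bigr)\cong(F,F')$. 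To see $q$ is a fibration here I would produce a lifting function: choose a lifting function $\lambda$ for $p\colon E\to B$ that also restricts to a lifting function for $p|_{E'}\colon E'\to B$, and set $\Lambda=\lambda^{n}$ applied coordinatewise. The key point is that $\Lambda$ preserves $Z(K;(E,E'))$: whenever $e_{i}\in E'$ the lifted path $\lambda(e_{i},\gamma_{i})$ stays in $E'$, so along any lift the support can only shrink and therefore remains a simplex of $K$; thus $\Lambda$ is a lifting function for $q$.

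The relative-bundle refinement I would treat by local triviality rather than lifting functions. A trivialisation of the pair over $U\subseteq B$ carries $(E,E')|_{U}$ to $(U\times F,\,U\times F')$ with transition functions in $G$ acting on $(F,F')$; taking an $n$-fold product of such charts and restricting, the support condition becomes a condition purely on the fibre coordinates (when $B=B'$, via $e_{i}\in E'\Leftrightarrow f_{i}\in F'$) or purely on the base coordinates (when $F=F'$, via $e_{i}\in E'\Leftrightarrow u_{i}\in B'$). In either case $q$ becomes, over such a chart, the trivial projection with fibre $Z(K;(F,F'))\subseteq F^{n}$, and the transition functions act diagonally on this fibre, exhibiting $G^{n}$ as the structure group.

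\emph{The main obstacle.} The step I expect to require the most care is the construction of the compatible lifting function $\lambda$ in the case $B=B'$ --- equivalently, the assertion that $(E,E')\to B$ is a fibration \emph{of pairs}. A lifting function for $p$ that simultaneously restricts to one for $p|_{E'}$ is the solution of a lift--extension problem along $E'\hookrightarrow E$, and its existence is exactly where the standing hypothesis of closed CW-pairs (so that $E'\hookrightarrow E$ is a fibrewise cofibration) enters; without such compatibility a coordinate starting in $E'$ could leave $E'$, forcing the support to jump to a non-face of $K$ and breaking the argument. Everything else is bookkeeping with the support function together with the standard stability of fibrations under pullback.
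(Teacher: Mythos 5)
First, a point of reference: the paper itself contains no proof of this lemma --- it is quoted verbatim from Denham--Suciu \cite{denham.suciu} (their Lemma 2.9), and even the paper's generalization, Lemma \ref{porter}, is stated ``with details of proof omitted.'' So your argument can only be measured against the standard argument from the cited source. Your outline is essentially that argument: the support-function description of $Z(K;(E,E'))$, the coordinatewise restriction of $p^n$, and, in the case $F=F'$, exactly the right pullback observation --- reading the hypothesis $F=F'$ fibrewise as $E'=p^{-1}(B')$ gives $Z(K;(E,E'))=(p^n)^{-1}\bigl(Z(K;(B,B'))\bigr)$, and a pullback of a fibration is a fibration with the stated fibre. (Do note the hypothesis must be read fibrewise: equality of the two fibres over a single basepoint does not formally yield $E'=p^{-1}(B')$; that is the intended meaning, and it is what holds in the applications.) The bundle refinement via products of charts is also fine.

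The one genuine soft spot is the step you yourself flag in the case $B=B'$: the existence of a lifting function for $p$ that restricts to a lifting function for $p|E'$. Two issues with how you dispose of it. First, you appeal to ``the standing hypothesis of closed CW-pairs,'' but no such hypothesis appears in the statement of the lemma, and in the paper's actual uses of it the pair $(E,E')$ is $(PX,\Omega X)$ or $(EG,G)$ --- not CW pairs. Second, even granting that $E'\hookrightarrow E$ is a closed cofibration, the existence of a compatible lifting function is a theorem, not a formality: one needs (i) that the restriction--evaluation map $E^I\to E\times_B B^I$, $\omega\mapsto(\omega(0),p\circ\omega)$, is a Hurewicz fibration and a homotopy equivalence, and (ii) that $E'\times_B B^I\hookrightarrow E\times_B B^I$ is a closed cofibration, so that the section over $E'\times_B B^I$ furnished by a lifting function for $p|E'$ extends to all of $E\times_B B^I$; point (ii) itself rests on Str{\o}m's theorem that the preimage of a closed cofibration under a Hurewicz fibration is a closed cofibration. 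The same theorem of Str{\o}m is what makes the paper's applications legitimate: $\Omega X=e_1^{-1}(*)\subset PX$ is a closed cofibration whenever $*\hookrightarrow X$ is, even though $(PX,\Omega X)$ is no CW pair. So your skeleton is correct and the difficulty is correctly located, but as written the crucial step is an assertion; to close it you must either make a cofibration hypothesis on $E'\hookrightarrow E$ explicit and supply the two Str{\o}m inputs above, or verify that hypothesis in each application as just indicated.
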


Notice that Lemma \ref{denham.suciu.fibrations} extends easily to
generalized moment-angle complexes as follows (with details of proof
omitted).

\begin{lem}\label{porter}
Let $p_i:(E_i,E'_i)\to(B_i,B_i')$ be a map of pairs, such that
$p:E_i\to B_i$ and $p|E'_i:E'_i\to B'_i$ are fibrations with fibers
$F_i$ and $F'_i$ respectively. Let $K$ be a simplicial complex with
$m$ vertices.
\begin{enumerate}
  \item If $B_i=B_i'$ for all $i$ then,
$Z(K;(\underline{B},\underline{B}'))=B_1\times\ldots\times B_m$.
  \item If $F_i=F'_i$ for all $i$, then
$Z(K;(\underline{F},\underline{F}'))=F_1\times\ldots\times F_m$.
  \item In any one of these two cases, the following is a fibration:
$$Z(K;(\underline{F},\underline{F}'))\to Z(K;(\underline{E},\underline{E}'))\to
Z(K;(\underline{B},\underline{B}').$$
\end{enumerate}
\end{lem}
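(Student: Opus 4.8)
The plan is to deduce Lemma~\ref{porter} directly from the Denham--Suciu result, Lemma~\ref{denham.suciu.fibrations}, by recognizing that the generalized (non-constant) setting reduces to the constant one once the two special hypotheses are correctly interpreted. First I would verify parts (1) and (2), which are purely formal. If $B_i = B_i'$ for every $i$, then in the definition of $D(\sigma)$ the choice $Y_i = B_i$ (for $i \in \sigma$) and $Y_i = B_i'$ (for $i \notin \sigma$) gives the \emph{same} factor in each coordinate regardless of $\sigma$, so $D(\sigma) = B_1 \times \cdots \times B_m$ for all $\sigma$. Taking the colimit over the nonempty $K$ (which contains at least $\emptyset$) yields $Z(K;(\underline{B},\underline{B}')) = B_1 \times \cdots \times B_m$, proving (1); part (2) is identical with $F_i$ in place of $B_i$.

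For part (3), the key step is to assemble the fibre-preserving structure coordinate by coordinate. I would form the product of the $m$ fibrations $p_i \colon E_i \to B_i$, giving a fibration $p^{[m]} \colon E^{[m]} \to B^{[m]}$ with fibre $F^{[m]} = F_1 \times \cdots \times F_m$, and simultaneously the product of the restrictions $p_i|E_i' \colon E_i' \to B_i'$, a fibration with fibre $F^{[m]\prime} = F_1' \times \cdots \times F_m'$. For each simplex $\sigma \in K$ the restriction of $p^{[m]}$ to $D_E(\sigma) = \prod_i Y_i$ (with $Y_i = E_i$ for $i \in \sigma$, $Y_i = E_i'$ otherwise) is itself a fibration onto $D_B(\sigma)$, since a product of fibrations restricted to a product of subspaces over which each factor is a fibration is again a fibration, with fibre the product of the individual fibres. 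Under either hypothesis $F_i = F_i'$ for all $i$, or $B_i = B_i'$ for all $i$, this fibre is independent of $\sigma$ in the relevant sense, so the fibrations over the various $D_B(\sigma)$ are compatible on overlaps $D_B(\sigma) \cap D_B(\tau) = D_B(\sigma \cap \tau)$.

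The main step is then to pass to the colimit: I would show that the compatible family of fibrations over the pieces $D_B(\sigma)$ glues to a fibration over $Z(K;(\underline{B},\underline{B}'))$ with total space $Z(K;(\underline{E},\underline{E}'))$ and fibre $Z(K;(\underline{F},\underline{F}'))$. This is exactly the mechanism used in Lemma~\ref{denham.suciu.fibrations}, and the only difference here is that the pairs $(E_i,E_i')$, $(B_i,B_i')$ vary with $i$; since all the gluing is done coordinatewise and the colimit defining $Z(K;-)$ is built from the same poset of simplices in each of the three spaces, the argument of Denham--Suciu applies verbatim to the indexed family. The reduction of the varying fibre to a constant one along the colimit uses precisely parts (1) and (2): under the hypothesis $F_i = F_i'$, Lemma~\ref{denham.suciu.fibrations}'s case $F = F'$ is the relevant one and the fibre is the full product $Z(K;(\underline{F},\underline{F}')) = F_1 \times \cdots \times F_m$ by part (2); under the hypothesis $B_i = B_i'$, the base degenerates to the full product by part (1) and case $B = B'$ applies.

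The hard part will be the gluing/colimit step, i.e.\ confirming that local fibrations over the $D_B(\sigma)$ assemble to a genuine fibration over the colimit rather than merely a quasifibration; this is the content that Denham--Suciu establish and that I would invoke, noting that the homotopy lifting property can be checked compatibly on the closed pieces $D(\sigma)$ because the intersections are again of the same product form and the structure maps respect them. Since the excerpt states this extension holds ``with details of proof omitted,'' I expect the intended justification is simply that every ingredient in the Denham--Suciu proof is coordinatewise and therefore insensitive to whether the pairs are constant in $i$, so the verification amounts to checking that their argument never used the constancy except through the two alternatives $F = F'$ or $B = B'$, which are replaced here by the pointwise hypotheses $F_i = F_i'$ for all $i$ or $B_i = B_i'$ for all $i$.
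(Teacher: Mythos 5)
Your proposal is correct and follows exactly the route the paper intends: the paper states this lemma as an extension of the Denham--Suciu result (Lemma~\ref{denham.suciu.fibrations}) ``with details of proof omitted,'' and your argument supplies those details in the expected way --- parts (1) and (2) by direct inspection of the definition of $D(\sigma)$, and part (3) by observing that the Denham--Suciu coordinatewise gluing argument is insensitive to the pairs varying with $i$, with the hypothesis $F_i=F_i'$ (resp.\ $B_i=B_i'$) playing the role of their case $F=F'$ (resp.\ $B=B'$). No discrepancy with the paper's approach.
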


Natural consequences of Lemma (\ref{porter}) arose earlier in the
work of G.~Porter \cite{porter}, T. Ganea \cite{ganea} and A.~Kurosh
\cite{kurosh} as discussed next. Porter considered the cases
$\{X_i,*_i,*_i)\}^m_{i=1}$. In this case, consider the natural
filtration of the product $X_1\times\ldots \times X_m$ as defined by
G.~W.~Whitehead with $j^{th}$-filtration given by the space
$$W_j(\prod_{1 \leq i\leq m}X_i)=\{(y_1,\ldots,y_m)|\ y_i=*_i,
\hbox{ the base-point of} \ X_i \ \hbox{for at least}\ m-j\
\hbox{values of}\ i\}.$$ The subspace $W_j(\prod_{1 \leq i \leq m}
X_i)$ of the product is precisely a choice of a generalized
moment-angle complex as follows.

Let $\Delta[m-1]_q$ denote the $q$-skeleton of $\Delta[m-1]$, that
is $$\Delta[m-1]_q=\{I\subset [m]\biggl| |I|\leq q+1\}.\biggr.$$
Then
$$W_j(\prod X_i)=Z(\Delta[m-1]_{j-1};(\underline{X},\underline{*})).$$
In this case, Porter's result gives an identification of the the
homotopy theoretic fibre of the inclusion $$W_j(\prod_{1 \leq i \leq
m} X_i) \subset \prod_{1 \leq i \leq m} X_i$$ a result which follows
directly from the classical path-loop fibration over $X$ and Lemma
\ref{porter}, features elucidated next.

Two standard constructions are used here. Recall that the path-space
$PX$ of a pointed space $(X,*)$ is the space of pointed continuous
maps $f:[0,1] \to X$ with $f(0) = *$, the base-point of $X$. The
evaluation map
$$e_1: PX \to X$$ is defined by $e_1(f)= f(1)$ and is a fibration
with fibre $\Omega X$ the space of continuous maps $f:[0,1] \to X$
with $f(0) = * = f(1)$. Observe that there is a fibration of pairs

\[
\begin{CD}
(\Omega X,\Omega X)  @>{}>> (PX, \Omega X) @>{ e_1}>> (X,*).
\end{CD}
\]

Next consider the (unreduced) cone $CY$ over a space $Y$ defined as
$$C(Y) = [0,1] \times Y/ \approx,$$ the quotient obtained from the equivalence
relation $(1,a) \approx (1,b)$ for all $a,b \in Y$ with equivalence
classes denoted $[t,y]$. Observe that there is a map
$$\kappa: X \to X \times C(PX)$$ with $$\kappa(x) = (x,[0,f_x])$$ where
$f_x: [0,1] \to X$ with $f_x(t) = x$. The map $\kappa$ is evidently
a homotopy equivalence.

Consider the pair $$ (X \times C(PX),PX)$$ for which $PX$ is the
subspace of $X \times C(PX)$ given by pairs $(f(1), [0,f])\in X
\times C(PX)$. Observe that there is a fibration of pairs
\[
\begin{CD}
(C(PX),\Omega X)  @>{}>> (X \times C(PX),PX) @>{\pi_X \times e_1}>>
(X,X)
\end{CD}
\] for which $\pi_X:X \times C(PX) \to X$ is the natural projection.
Also, observe that there is an equivalences of pairs $$(X,*) \to (X
\times C(PX),PX).$$ Thus by Lemma \ref{porter}, there is a fibration
\[
\begin{CD}
Z(K;(\underline{PX},\underline{\Omega X}))  @>{}>> Z(K;(\underline{X
\times PX},\underline{PX})) @>{\pi_X \times e_1}>>
Z(K;(\underline{X},\underline{X}))
\end{CD}
\] as well as the associated fibration (up to homotopy)
\[
\begin{CD}
Z(K;(\underline{PX},\underline{\Omega X}))  @>{}>>
Z(K;(\underline{X},\underline{*})) @>{\pi_X \times e_1}>>
Z(K;(\underline{X},\underline{X}))
\end{CD}
\] a remark recorded as the next Corollary.

\begin{cor} \label{cor:proof.of.porter}
If all of the $X_i$ are path-connected, the homotopy theoretic fibre
of the inclusion $ Z(K;(\underline{X},\underline{*})) \subset
Z(K;(\underline{X},\underline{X}))$ is
$Z(K;(\underline{PX},\underline{\Omega X})).$

Thus if all of the $X_i$ are path-connected, the homotopy theoretic
fibre of the inclusion $W_j(\prod_{1 \leq i \leq m} X_i) \subset
\prod_{1 \leq i \leq
m}X_i=Z(\Delta[m-1]_{j-1};(\underline{X},\underline{*}))$ is
$$Z(\Delta[m-1]_{j-1};(\underline{PX},\underline{\Omega X})).$$
\end{cor}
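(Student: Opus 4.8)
The plan is to assemble the fibrations constructed in the paragraphs just above the statement, the only genuinely new ingredients being the replacement of a non-fibration inclusion by an honest fibration and the identification of the resulting fibre. First I would observe that the inclusion of pairs $(X,*)\hookrightarrow(X,X)$ is not a fibration and replace it. The map $\kappa\colon(X,*)\to(X\times C(PX),PX)$ is a homotopy equivalence of pairs: on total spaces $\kappa$ is an equivalence because $C(PX)$ is contractible, and on subspaces it is the map $*\to PX$, an equivalence because $PX$ is contractible. Moreover the triangle $(X,*)\xrightarrow{\kappa}(X\times C(PX),PX)\xrightarrow{\pi_X\times e_1}(X,X)$ commutes strictly: on total spaces $\pi_X\circ\kappa=\mathrm{id}_X$, and on subspaces $e_1\circ\kappa$ carries $*$ to the constant loop $f_*$, which evaluates to $f_*(1)=*$. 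Applying the bifunctor $Z(K;-)$, natural for morphisms of pairs by Remark \ref{remark:naturality}, I conclude that the inclusion under study factors as a homotopy equivalence $Z(K;(\underline X,\underline *))\xrightarrow{\simeq}Z(K;(\underline{X\times C(PX)},\underline{PX}))$ followed by the map $Z(K;(\underline{X\times C(PX)},\underline{PX}))\to Z(K;(\underline X,\underline X))=X^m$.

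Next I would apply Lemma \ref{porter} to the fibration of pairs $(C(PX),\Omega X)\to(X\times C(PX),PX)\xrightarrow{\pi_X\times e_1}(X,X)$. As the base pair is $(X,X)$, we are in the case $B_i=B_i'$, so the lemma yields a genuine fibration $Z(K;(\underline{C(PX)},\underline{\Omega X}))\to Z(K;(\underline{X\times C(PX)},\underline{PX}))\to X^m$. Since the total-space map of this fibration is, after the homotopy equivalence of the first paragraph, exactly the inclusion under study, the homotopy-theoretic fibre of $Z(K;(\underline X,\underline *))\subset Z(K;(\underline X,\underline X))$ equals the fibre $Z(K;(\underline{C(PX)},\underline{\Omega X}))$. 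To match the stated form I would then note that the bottom inclusion $PX\hookrightarrow C(PX)$ is a homotopy equivalence (both spaces are contractible) and restricts to the identity on $\Omega X$, so $(PX,\Omega X)\hookrightarrow(C(PX),\Omega X)$ is a homotopy equivalence of pairs; applying $Z(K;-)$ once more identifies the fibre as $Z(K;(\underline{PX},\underline{\Omega X}))$, giving the first assertion. The second assertion is the special case $K=\Delta[m-1]_{j-1}$, using $W_j(\prod X_i)=Z(\Delta[m-1]_{j-1};(\underline X,\underline *))$ and $\prod X_i=X^m=Z(\Delta[m-1]_{j-1};(\underline X,\underline X))$ recorded above.

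The step I expect to need the most care is the claim that $Z(K;-)$ sends a homotopy equivalence of pairs to a homotopy equivalence of generalized moment-angle complexes; this is what legitimizes both replacing the inclusion by the fibration and replacing $(C(PX),\Omega X)$ by $(PX,\Omega X)$. Because $Z(K;-)$ is a colimit over the face poset, this homotopy invariance is not automatic: it relies on the inclusions $A_i\hookrightarrow X_i$ being cofibrations, so that the colimit computes the corresponding homotopy colimit. For the path, loop, and cone constructions one must therefore work with models (for instance CW or Moore-path models) making $\Omega X\hookrightarrow PX\hookrightarrow C(PX)$ and $*\hookrightarrow X$ cofibrations. The path-connectedness hypothesis on the $X_i$ enters to guarantee that the base $X^m$ is connected, so that the fibre of the fibration produced by Lemma \ref{porter} agrees with the homotopy-theoretic fibre named in the statement.
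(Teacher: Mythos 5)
Your proposal is correct and follows essentially the same route as the paper: replace the pair $(X,*)$ by the equivalent pair $(X\times C(PX),PX)$ via $\kappa$, apply Lemma \ref{porter} (case $B_i=B_i'$) to the fibration of pairs $(C(PX),\Omega X)\to(X\times C(PX),PX)\to(X,X)$, and identify the resulting fibre to obtain the stated homotopy fibre, specializing to $K=\Delta[m-1]_{j-1}$ for the second assertion. If anything, you are slightly more careful than the paper, which writes the fibre as $Z(K;(\underline{PX},\underline{\Omega X}))$ without remarking on the passage from $(C(PX),\Omega X)$ to $(PX,\Omega X)$, and which leaves implicit both the cofibrancy conditions underlying the homotopy invariance of $Z(K;-)$ and the role of path-connectedness.
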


In addition, a second Theorem of Porter \cite{porter} gives the
structure of the fibre
$Z(\Delta[m-1]_{j-1};(\underline{PX},\underline{\Omega X}))$. The
next Theorem provides a decomposition of the suspension of
$Z(\Delta[m-1]_{q-1};(\underline{CY},\underline{Y}))$ where $CY$
denotes the cone over a connected CW-complex $Y$. Let
$(\underline{CY},\underline{Y})=\{(CY_i,Y_i,x_i)\}^m_{i=1}$ be a
family of connected, pointed CW-pairs and recall that
$\widehat{Y}^I$ is $Y_{i_1}\wedge\ldots\wedge X_{i_k}$ if
$I=(i_1,\ldots,i_k)$. The next result follows from Theorem
\ref{T2.19}.

\begin{thm}\label{T:1.18}
Let $(\underline{CY},\underline{Y})=\{(CY_i,Y_i,x_i)\}^m_{i=1}$ be a
family of connected, pointed CW-pairs. Then there is a homotopy
equivalence
$$ \Sigma(Z(\Delta[m-1]_{q};(\underline{CY},\underline{Y})))
\to\Sigma(\bigvee_{|I|>q+1}\bigvee_{t_I}(\Sigma
^{|I|+1}\widehat{Y}^I))$$ where $t_I$ is the binomial coefficient
$\binom {|I|+1} {q+1}$.
\end{thm}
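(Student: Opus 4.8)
The plan is to deduce Theorem \ref{T:1.18} from the suspension splitting of Theorem \ref{thm:decompositions.for.general.moment.angle.complexes} combined with the contractible-factor computation of Theorem \ref{T2.19}, specialized to $K=\Delta[m-1]_q$ and $(\underline{X},\underline{A})=(\underline{CY},\underline{Y})$. First I would apply Theorem \ref{thm:decompositions.for.general.moment.angle.complexes} to obtain
$$\Sigma\bigl(Z(\Delta[m-1]_q;(\underline{CY},\underline{Y}))\bigr)\ \simeq\ \Sigma\Bigl(\bigvee_{I\le[m]}\widehat{Z}\bigl((\Delta[m-1]_q)_I;(\underline{CY_I},\underline{Y_I})\bigr)\Bigr).$$
Since every $CY_i$ is a cone, hence contractible, Theorem \ref{T2.19} applies to each wedge summand and identifies $\widehat{Z}((\Delta[m-1]_q)_I;(\underline{CY_I},\underline{Y_I}))$ with $|(\Delta[m-1]_q)_I|\ast\widehat{Y}^I$; equivalently one may quote Theorem \ref{thm:null.homotopy.for.A.to.X} directly. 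This reduces the theorem to a combinatorial identification of the full subcomplexes and a homotopy computation of the resulting joins.

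Next I would carry out the combinatorics. The faces of $\Delta[m-1]_q$ are exactly the subsets of $[m]$ of cardinality at most $q+1$, so the summand indexed by $I$ is contractible whenever $|I|\le q+1$: in that range the full subcomplex $(\Delta[m-1]_q)_I$ is the entire simplex on the vertices of $I$, its realization is contractible, and its join with $\widehat{Y}^I$ is a cone. This recovers precisely the index range $|I|>q+1$ appearing in the statement. For $|I|>q+1$ the full subcomplex $(\Delta[m-1]_q)_I$ is the $q$-skeleton $\Delta[\,|I|-1\,]_q$ of the simplex on the $|I|$ vertices of $I$. I would then pin down its homotopy type: as a proper skeleton of a contractible simplex it is $(q-1)$-connected with reduced homology concentrated in degree $q$, and truncating the exact augmented simplicial chain complex of the full simplex at degree $q$ shows that $\widetilde{H}_q$ is free of rank $t_I$, whence $|(\Delta[m-1]_q)_I|$ is homotopy equivalent to a wedge of $t_I$ copies of $S^q$.

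With the homotopy type in hand, I would evaluate each join using the equivalence $X\ast Y\simeq\Sigma(X\wedge Y)$ from Definition \ref{defin:smash.products} together with the distributivity of the smash product over wedges:
$$\Bigl(\bigvee_{t_I}S^q\Bigr)\ast\widehat{Y}^I\ \simeq\ \Sigma\Bigl(\bigvee_{t_I}\bigl(S^q\wedge\widehat{Y}^I\bigr)\Bigr)\ \simeq\ \bigvee_{t_I}\Sigma^{\,q+1}\widehat{Y}^I.$$
Assembling these summands over all $I$ with $|I|>q+1$ and applying the single outer suspension then yields the asserted wedge decomposition, with the naturality inherited from the map $H$ of Theorem \ref{thm:T2.1} and from Theorem \ref{thm:decompositions.for.general.moment.angle.complexes}.

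The step I expect to be the main obstacle is the middle one: verifying both the connectivity and the exact rank of the top reduced homology of $\Delta[\,|I|-1\,]_q$, and then choosing the wedge-of-spheres equivalence compatibly enough that the subsequent join and smash manipulations remain valid up to homotopy for CW pairs. The reduction in the first paragraph and the join computation in the third are formal consequences of the splitting theorems already established, so the genuine content of the argument is concentrated in the homotopy-type identification of the skeleton.
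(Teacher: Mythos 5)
Your overall route is exactly the paper's: the paper deduces this theorem in one line from Theorem \ref{T2.19} combined with the splitting of Theorem \ref{thm:decompositions.for.general.moment.angle.complexes} (equivalently, Theorem \ref{thm:null.homotopy.for.A.to.X}), together with the observation that $(\Delta[m-1]_q)_I$ is a full simplex (hence contractible realization) when $|I|\le q+1$ and equals $\Delta[|I|-1]_q$ when $|I|>q+1$. Your first and third paragraphs fill in those steps correctly.

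However, there is a genuine problem at precisely the step you identify as the crux, and it is not merely a matter of care: the truncated-chain-complex computation does \emph{not} produce rank $t_I=\binom{|I|+1}{q+1}$. Exactness of the augmented simplicial chain complex of the simplex on $|I|$ vertices gives
\begin{equation*}
\operatorname{rank}\widetilde{H}_q\bigl(\Delta[|I|-1]_q\bigr)
=\binom{|I|}{q+2}-\binom{|I|}{q+3}+\cdots
=\binom{|I|-1}{q+1},
\end{equation*}
(equivalently, compute the reduced Euler characteristic), so the wedge has $\binom{|I|-1}{q+1}$ spheres, not $\binom{|I|+1}{q+1}$. Moreover, your own join computation yields $\Sigma^{q+1}\widehat{Y}^I$, whereas the statement asserts $\Sigma^{|I|+1}\widehat{Y}^I$; your closing claim that assembling the summands ``yields the asserted wedge decomposition'' silently conflates the two. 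What your argument actually proves is
\begin{equation*}
\Sigma\bigl(Z(\Delta[m-1]_q;(\underline{CY},\underline{Y}))\bigr)\ \simeq\
\Sigma\Bigl(\bigvee_{|I|>q+1}\ \bigvee_{\binom{|I|-1}{q+1}}\Sigma^{q+1}\widehat{Y}^I\Bigr),
\end{equation*}
which is the correct result but not the literal statement: the exponent $|I|+1$ and the coefficient $\binom{|I|+1}{q+1}$ in this version of the theorem are errors. You can confirm this from the paper's own fat-wedge remark immediately after the theorem: for $q=m-2$ the only admissible index is $I=[m]$, and the paper asserts the fibre suspension is $\Sigma(\Omega X_1*\cdots*\Omega X_m)\simeq\Sigma^{m}(\Omega X_1\wedge\cdots\wedge\Omega X_m)$, consistent with multiplicity $\binom{m-1}{m-1}=1$ and suspension $\Sigma^{q+1}=\Sigma^{m-1}$, and inconsistent with $\binom{m+1}{m-1}$ copies of $\Sigma^{m+1}(\cdot)$. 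So your proof strategy is the right one, but a careful write-up must either carry out the rank computation (getting $\binom{|I|-1}{q+1}$) and flag the discrepancy with the printed statement, or it proves nothing as stated; asserting without computation that the rank equals $t_I$ and that $\Sigma^{q+1}$ equals the asserted $\Sigma^{|I|+1}$ is where your argument, as written, fails.
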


When $q=m-2$, $Z(\Delta[m-1]_{m-2};(\underline{X};\underline{*}))$
is the fat wedge $W_{m-1}(\Pi X_i)$ and the homotopy fiber is
$Z(\Delta[m-1]_{m-2};(\underline{PX},\underline{\Omega X}))$. In
this case, it follows that there is a homotopy equivalence
$$\Sigma(Z(K_{m-1,m-2};(\underline{PX},\underline{\Omega X}))) \to \Sigma(\Omega X_1 *\ldots *\Omega X_m).$$

T.~Ganea \cite{ganea} had identified the homotopy theoretic fibre of
the natural inclusion $$X \bigvee Y \to X \times Y$$ as the join
$\Omega(X)*\Omega(Y)$ in case $X$ and $Y$ are path-connected,
pointed spaces of the homotopy type of a CW-complex. This example is
a special case of the homotopy theoretic fibre for the inclusion of
a generalized moment-angle complex in a product corresponding to the
simplicial complex $K$ given by two disjoint points.

The next result gives a determination of the cohomology ring of
$Z(K;(\underline{X},\underline{A}))$ under the conditions that each
$A_i$ is contractible, and coefficients are taken in a ring $R$
where
\begin{enumerate}
  \item $R$ is a field, or
  \item the cohomology of $X$ satisfies the strong form of the K\"unneth
  Theorem with coefficients in $R$.
\end{enumerate} Examples in the case $R = \mathbb Z$ are given by spaces $X$
of finite type which have torsion free cohomology over $\mathbb Z$.

Notice that there is a natural inclusion
$j:Z(K;(\underline{X},\underline{A}))\subset
\prod\limits^m_{i=1}X_i$. Theorem \ref{thm:contractible.A} implies
that $$j^*:H^*(\prod X_i;R) \to
H^*(Z(K;(\underline{X},\underline{A}));R)$$ 
is onto and that the kernel of $\theta^*$ is defined to be {\it the
generalized Stanley-Reisner ideal} $I(K)$ which is generated by all
elements $x_{j_1}\otimes x_{j_2}\otimes\ldots \otimes x_{j_l}$ for
which $x_{j_t}\in \bar{H}^*(X_{j_t};R)$ and the sequence
$J=(j_1,\ldots, j_l)$ is not a simplex of $K$. This construction
provides a useful extension of the Stanley-Reisner ring
\cite{davis.jan} as stated next.

\begin{thm}\label{thm:cohomology.for.contractible.A}
Let $K$ be an abstract simplicial complex with $m$ vertices and let
$$(\underline{X},\underline{A})=\{(X_i,A_i,x_i)\}^m_{i=1}$$ be $m$
pointed, connected CW-pairs. If all of the $A_i$ are contractible
and coefficients are taken in a ring $R$ for which either
\begin{enumerate}
\item $R$ is a field, or
  \item the cohomology of $X$ with coefficients in $R$ satisfies the strong form
of the K\"unneth Theorem,
\end{enumerate} then there is an isomorphism of algebras
$$(\bigotimes^m_{i=1}H^*(X_i;R)/I(K)\to H^*(Z(K;(\underline{X},\underline{A}));R).$$
Furthermore, there are isomorphisms of underlying abelian groups
given by
$$E^*(Z(K;(\underline{X},\underline{A}))\to
\bigoplus\limits_{\sigma\in K}E^*(\widehat{D}(\sigma))$$ for any
reduced cohomology theory $E^*$.
\end{thm}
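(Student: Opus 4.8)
The plan is to prove Theorem~\ref{thm:cohomology.for.contractible.A} in two stages: first establish the additive splitting for an arbitrary reduced cohomology theory $E^*$, and then upgrade this to a ring isomorphism in ordinary cohomology under the stated K\"unneth hypotheses. For the additive statement, I would start from Theorem~\ref{thm:more.contractible.A}, which gives (after suspension) the geometric splitting $\Sigma Z(K;(\underline{X},\underline{A})) \simeq \Sigma(\bigvee_{I \in K} \widehat{X}^I)$ when all $A_i$ are contractible. Applying any reduced cohomology theory $E^*$ and using that $E^*$ sends wedges to products (equivalently, reduced cohomology of a suspension converts the wedge into a direct sum), I obtain $E^*(Z(K;(\underline{X},\underline{A}))) \cong \bigoplus_{I \in K} E^*(\widehat{X}^I)$. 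Since the simplices $\sigma \in K$ are exactly the sequences $I \in K$, and $\widehat{D}(\sigma) = \widehat{X}^\sigma$ when the $A_i$ are contractible (each $A_i$-factor smashing to a point), this is precisely the claimed abelian-group isomorphism $E^*(Z(K;(\underline{X},\underline{A}))) \to \bigoplus_{\sigma \in K} E^*(\widehat{D}(\sigma))$.

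For the algebra statement, the key is the naturality of the inclusion $j: Z(K;(\underline{X},\underline{A})) \hookrightarrow \prod_{i=1}^m X_i$. Under the K\"unneth hypothesis (either $R$ a field, or the cohomology of $X$ satisfying the strong K\"unneth theorem), the reduced cohomology $\bar{H}^*(\widehat{X}^I;R)$ is the tensor product $\bigotimes_{i \in I}\bar{H}^*(X_i;R)$, and $H^*(\prod X_i;R) = \bigotimes_{i=1}^m H^*(X_i;R)$ with its standard product ring structure. The additive splitting just established, organized over $I \in K$, identifies $H^*(Z(K;(\underline{X},\underline{A}));R)$ additively with $\bigoplus_{I \in K} \bigotimes_{i \in I}\bar{H}^*(X_i;R)$, which is exactly the quotient of $\bigotimes_{i=1}^m H^*(X_i;R)$ by the subspace spanned by those tensor monomials $x_{j_1}\otimes \cdots \otimes x_{j_l}$ whose support $J=(j_1,\ldots,j_l)$ fails to be a simplex of $K$. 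I would then argue that $j^*$ is surjective (this is the content flagged before the theorem, following from Theorem~\ref{thm:contractible.A}) and that its kernel is precisely the generalized Stanley-Reisner ideal $I(K)$. Since $I(K)$ is by definition an ideal and $j^*$ is a ring map, the induced map $\bigl(\bigotimes_{i=1}^m H^*(X_i;R)\bigr)/I(K) \to H^*(Z(K;(\underline{X},\underline{A}));R)$ is an algebra homomorphism, and the additive computation shows it is a bijection, hence an isomorphism of algebras.

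The main obstacle will be verifying that the kernel of $j^*$ equals $I(K)$ on the nose, rather than merely containing it. Containment is immediate: any monomial supported on a non-simplex $J \notin K$ restricts to zero because the corresponding product $\widehat{X}^J$ does not appear in the splitting of $Z(K;(\underline{X},\underline{A}))$. The reverse inclusion is where the additive splitting does the real work: I would use it to show that the quotient $\bigl(\bigotimes H^*(X_i;R)\bigr)/I(K)$ has, in each degree, the same rank (or dimension, or module structure) as $H^*(Z(K;(\underline{X},\underline{A}));R)$, since both are indexed by the simplices $I \in K$ with the summand for $I$ equal to $\bigotimes_{i \in I}\bar{H}^*(X_i;R)$. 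A dimension count then forces the surjection $\bigl(\bigotimes H^*(X_i;R)\bigr)/I(K) \to H^*(Z(K;(\underline{X},\underline{A}));R)$ to be injective. Care is needed in the non-field case: there the strong K\"unneth hypothesis is exactly what guarantees that $\bar H^*(\widehat X^I;R)=\bigotimes_{i\in I}\bar H^*(X_i;R)$ as $R$-modules with no correction terms, so that the monomial basis description of both sides matches summand-by-summand and the rank comparison remains valid over $R$.
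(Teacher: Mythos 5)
Your overall strategy coincides with the paper's: both proofs rest on the stable splitting $\Sigma Z(K;(\underline{X},\underline{A}))\simeq \Sigma(\bigvee_{I\in K}\widehat{X}^I)$ coming from Theorems \ref{thm:decompositions.for.general.moment.angle.complexes} and \ref{thm:contractible.A}, on the corresponding splitting of $\Sigma(X_1\times\cdots\times X_m)$ over all $I\leq [m]$ from Theorem \ref{thm:T2.1}, and on naturality of these equivalences with respect to the inclusion $j\colon Z(K;(\underline{X},\underline{A}))\hookrightarrow X_1\times\cdots\times X_m$. Your treatment of the additive statement for an arbitrary reduced theory $E^*$ is exactly the paper's.

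The gap is in your final injectivity step. You establish that $j^*$ is onto and that $I(K)\subseteq\ker j^*$, and you then conclude equality by a degreewise rank comparison between $\bigl(\bigotimes_{i=1}^m H^*(X_i;R)\bigr)/I(K)$ and $H^*(Z(K;(\underline{X},\underline{A}));R)$. That comparison is not valid under the stated hypotheses: the theorem allows arbitrary connected CW-pairs, so the graded pieces of these modules need not be finitely generated, and a surjection between abstractly isomorphic modules that are not finitely generated can have nontrivial kernel (for instance the shift $\bigoplus_{\mathbb{N}}R\to\bigoplus_{\mathbb{N}}R$ killing the first coordinate); even when $R$ is a field the count requires finite dimensionality in each degree, which is not assumed. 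The paper avoids any counting: by naturality, the suspension of $j$ is a split map with cofibre $\Sigma(\bigvee_{I\notin K}\widehat{X}^I)$, so $j^*$ is, summand by summand, the projection $\bigoplus_{I\leq[m]}\bigotimes_{i\in I}\bar{H}^*(X_i;R)\to\bigoplus_{I\in K}\bigotimes_{i\in I}\bar{H}^*(X_i;R)$, whose kernel is exactly $\bigoplus_{I\notin K}\bigotimes_{i\in I}\bar{H}^*(X_i;R)=I(K)$. You already have the needed ingredients: for $I\in K$ the full subcomplex $K_I$ is the simplex on $I$, so the map of wedge summands $\widehat{Z}(K_I;(\underline{X_I},\underline{A_I}))\to\widehat{X}^I$ is the identity, while for $I\notin K$ the source is contractible by Theorem \ref{thm:contractible.A}; this is the same observation underlying your containment $I(K)\subseteq\ker j^*$, and pushed through it identifies the kernel on the nose, with no finiteness hypotheses, replacing the dimension count entirely.
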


\begin{remark}\label{remark:any.theory}
An analogous result for the cohomology ring structure is satisfied
for any cohomology theory $E^*(-)$ for which $E^*(X)$ satisfies the
strong form of the K\"unneth Theorem \cite{astey}.
\end{remark}

Denham and Suciu \cite{denham.suciu} consider an action of a
topological group $G$ on the pair $(X,A)$ together with the natural
induced action of $G^m$ on $Z(K;(X,A))$ where $K$ has $m$ vertices.
The associated Borel construction (homotopy orbit space) is
$$EG^m \times_{G^m}Z(K;(X,A)).$$ The applications here will be
restricted to topological groups $G$ which are CW-complexes. Regard
$G$ as a subspace of $EG$ given by the orbit of a point
corresponding to the identity in $G$. Denham-Suciu proved the
following using their Lemma 2.10 \cite{denham.suciu}).

\begin{thm} \label{thm:denham.suciu.BG}
The space $Z(K;(BG,*))$ is homotopy equivalent to the homotopy orbit
space $$EG^m \times_{G^m}Z(K;(EG,G)).$$

\noindent Furthermore, the natural projection $$EG^m
\times_{G^m}Z(K;(EG,G)) \to BG^m$$ is a fibration with fibre
$$Z(K;(EG,G)).$$
\end{thm}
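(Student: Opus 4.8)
The plan is to deduce Theorem~\ref{thm:denham.suciu.BG} from the fibration Lemma~\ref{porter} applied to the universal principal $G$-bundle, exactly in the spirit of the path-loop argument used above for Porter's theorem. The key observation is that the pair $(EG,G)$, together with the projection $EG \to BG$, fits the hypotheses of Lemma~\ref{porter}: one has a fibration of pairs
\[
\begin{CD}
(G,G) @>{}>> (EG,G) @>{}>> (BG,*),
\end{CD}
\]
in which the total-space map $EG \to BG$ is the universal bundle projection with fibre $G$, the restriction $G \to *$ is a fibration with fibre $G$, and the base pair is $(BG,*)$ so that $B = BG$ while $B' = *$. Since this is a relative bundle with structure group $G$, clause~(3) of Lemma~\ref{porter} (the relative-bundle refinement stated in Lemma~\ref{denham.suciu.fibrations}) produces a bundle
\[
\begin{CD}
Z(K;(\underline{G},\underline{G})) @>{}>> Z(K;(\underline{EG},\underline{G})) @>{}>> Z(K;(\underline{BG},\underline{*}))
\end{CD}
\]
with structure group $G^m$. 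By clause~(2) of Lemma~\ref{porter}, since $F_i = F_i' = G$, the fibre $Z(K;(\underline{G},\underline{G}))$ is simply the product $G^m$, so this is a principal $G^m$-bundle over $Z(K;(BG,*))$ with total space $Z(K;(EG,G))$.

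Next I would produce the classifying map. A principal $G^m$-bundle is classified by a map into $BG^m$, and the bundle just constructed is pulled back from the universal $G^m$-bundle $EG^m \to BG^m$ along a map $Z(K;(BG,*)) \to BG^m$. The claim is that this classifying map is the evident one induced by the $m$-fold diagonal-type structure, so that the total space $Z(K;(EG,G))$ is identified with the pullback $Z(K;(BG,*)) \times_{BG^m} EG^m$. The first assertion of the theorem, that $Z(K;(BG,*)) \simeq EG^m \times_{G^m} Z(K;(EG,G))$, then follows because the Borel construction of a free $G^m$-space is the orbit space, and for a principal bundle the homotopy orbit space $EG^m \times_{G^m} (\text{total space})$ recovers the base up to homotopy equivalence. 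Concretely, since $EG^m$ is contractible and $G^m$ acts freely on $Z(K;(EG,G))$ with quotient $Z(K;(BG,*))$, the projection $EG^m \times_{G^m} Z(K;(EG,G)) \to Z(K;(BG,*))$ is a fibration with contractible fibre $EG^m$, hence a homotopy equivalence.

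For the second assertion I would exhibit the map $EG^m \times_{G^m} Z(K;(EG,G)) \to BG^m$ as the Borel projection induced by collapsing $Z(K;(EG,G))$ to a point, namely the composite $EG^m \times_{G^m} Z(K;(EG,G)) \to EG^m \times_{G^m} * = BG^m$. Because $G^m$ acts on $Z(K;(EG,G))$ and $EG^m \to BG^m$ is the universal bundle, this is a fibre bundle with fibre exactly $Z(K;(EG,G))$; equivalently it is the Borel fibration associated to the $G^m$-space $Z(K;(EG,G))$.

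The main obstacle I anticipate is verifying that the $G^m$-action on $Z(K;(EG,G))$ is genuinely free, so that the Borel construction collapses to the orbit space as claimed, and that the functorial colimit defining $Z(K;-)$ is compatible with taking orbit spaces and pullbacks. Freeness is not automatic: the $G$-action on the pair $(EG,G)$ is free on $EG$ but the subspace $G \subset EG$ carries the residual free $G$-action, and one must check that after assembling the product-diagonal $G^m$-action over the colimit the stabilizers remain trivial on every $D(\sigma)$. Here the fact that $EG$ is a free $G$-space and that each factor $Y_i$ (whether $EG$ or $G$) is a free $G$-space is decisive; the product action on $\prod_i Y_i$ is then free, and freeness passes to the subspace $Z(K;(EG,G))$ and is preserved by the colimit. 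Once freeness is in hand, the identification of the homotopy orbit space with the quotient $Z(K;(BG,*))$ and the bundle structure over $BG^m$ follow formally from the naturality of the Borel construction with respect to the colimit over $\bar K$, which is exactly the content of Denham--Suciu's Lemma~2.10.
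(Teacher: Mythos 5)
Your proof is correct, but it takes a genuinely different route from the one the paper relies on. The paper in fact offers no argument of its own for this statement: it defers to Denham--Suciu's Lemma 2.10, which is the assertion that the Borel construction commutes with the polyhedral product functor, $EG^m \times_{G^m} Z(K;(X,A)) \cong Z(K;(EG\times_G X,\, EG\times_G A))$ for an arbitrary $G$-pair $(X,A)$; applied to $(X,A)=(EG,G)$ this identifies $EG^m\times_{G^m}Z(K;(EG,G))$ with $Z(K;(EG\times_G EG,\, EG\times_G G))$, and since $EG\times_G EG\to BG$ is a fibration with contractible fibre and $EG\times_G G\cong EG$ is contractible, the Homotopy Lemma (Theorem \ref{T:3.4}) converts the pair to $(BG,*)$. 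You instead argue from Lemma \ref{denham.suciu.fibrations} (Denham--Suciu's Lemma 2.9): since $G\subset EG$ is precisely the fibre over the basepoint of $BG$, the preimage of $Z(K;(BG,*))\subset BG^m$ under $p^m:EG^m\to BG^m$ is exactly $Z(K;(EG,G))$, so $Z(K;(EG,G))\to Z(K;(BG,*))$ is a principal $G^m$-bundle; then the Borel construction of the total space of a principal bundle recovers the base, because $EG^m\times_{G^m}Z(K;(EG,G))\to Z(K;(BG,*))$ is a fibration with contractible fibre $EG^m$ (a homotopy equivalence, using the standing hypothesis that $G$ is a CW-complex so all spaces involved have CW homotopy type), and the projection to $BG^m$ is the usual associated-bundle fibration with fibre $Z(K;(EG,G))$. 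Two small remarks: your classifying-map paragraph is superfluous, since the bundle is \emph{literally} the restriction of $EG^m\to BG^m$ to the subspace $Z(K;(BG,*))$, which also settles freeness and local triviality at once; and your instinct to worry about freeness is the right one, because it marks the real difference between the two proofs. Your argument is more elementary and entirely self-contained given what is already proved in the paper, but it only works for pairs like $(EG,G)$ on which the action is free; the Denham--Suciu commutation lemma applies to $G$-pairs with non-free actions as well---for example $(D^2,S^1)$ with its $S^1$-action, which is exactly what is needed to identify $\mathcal D\mathcal J(K)=ET^m\times_{T^m}Z(K;(D^2,S^1))$ with $Z(K;(BS^1,*))$ in Definition \ref{defin:DJ.spaces}, an identification your free-action argument cannot reach directly.
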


Recall the $T^m$-action on  $Z(K;(D^2, S^1))$ induced by the natural
$S^1$-action on the pair $(D^2, S^1)$.
\begin{defin}\label{defin:DJ.spaces}
Define the Davis-Januszkiewicz space
$$\mathcal D\mathcal J(K) = ET^m \times_{T^m}Z(K;(D^2,S^1))= Z(K;(BS^1,\ast)).$$
\end{defin}

The space $\mathcal D\mathcal J(K)$ is an example of the
construction addressed above by Denham-Suciu \cite{denham.suciu}.
The cohomology ring of $\mathcal D\mathcal J(K)$ was computed first
in work of Davis-Januszkiewicz \cite{davis.jan} as well as work of
Buchstaber-Panov \cite{buchstaber.panov}. An elegant proof was given
in work of Denham-Suciu \cite{denham.suciu}.

To state their results, let $I= (i_1,\ldots, i_k) $ denote a simplex
in $\Delta[m-1]$. Then, let $x_{I}= x_{i_1}\ldots x_{i_k}$ for which
the $x_i$, $1 \leq i \leq m$, are the natural choice of algebra
generators for the integral cohomology ring of $ BT^m$.

\begin{thm} \label{thm:denham.suciu.cohomology}
The integral cohomology ring of the Davis-Januszkiewicz space
$$\mathcal D\mathcal J(K) = Z(K;(BS^1,\ast))$$ is isomorphic as an algebra
to $$\mathbb Z[x_1, \ldots,x_m]/I(K)$$ where $I(K)$ denotes the
ideal generated by $x_{I}= x_{i_1}\ldots x_{i_k}$ for which $I=
(i_1,\ldots, i_k)$ is not a simplex in $K$. In particular, $\mathbb
Z[x_1, \ldots,x_m]/I(K)$ is the Stanley-Reisner ring of $K$.
\end{thm}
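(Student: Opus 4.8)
The plan is to obtain this statement as the specialization of Theorem \ref{thm:cohomology.for.contractible.A} to the pairs $(X_i, A_i, x_i) = (BS^1, \ast, \ast)$ for every $i$, with coefficient ring $R = \mathbb{Z}$. By Definition \ref{defin:DJ.spaces} the space under consideration is precisely $\mathcal{D}\mathcal{J}(K) = Z(K;(BS^1,\ast))$, so once the hypotheses of Theorem \ref{thm:cohomology.for.contractible.A} are checked, the algebra isomorphism follows formally and the remaining task is the identification of the two purely algebraic sides.

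First I would verify the hypotheses. Each $A_i = \ast$ is contractible, as required. For the coefficient condition, recall that $H^*(BS^1;\mathbb{Z}) = H^*(\mathbb{C}\mathbb{P}^{\infty};\mathbb{Z}) = \mathbb{Z}[x_i]$ is the polynomial ring on a single generator $x_i$ of degree two; in each degree this is a free abelian group of finite rank, so the strong form of the K\"unneth Theorem holds with $\mathbb{Z}$-coefficients. This is exactly the torsion-free example recorded before Theorem \ref{thm:cohomology.for.contractible.A}, so alternative (2) of that theorem applies.

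Next I would compute both sides. The tensor product $\bigotimes_{i=1}^m H^*(BS^1;\mathbb{Z})$ is the polynomial ring $\mathbb{Z}[x_1,\ldots,x_m]$, where $x_i$ is the image of the generator of the $i$-th factor, of degree two. Theorem \ref{thm:cohomology.for.contractible.A} then furnishes an isomorphism of algebras $\mathbb{Z}[x_1,\ldots,x_m]/I(K) \to H^*(\mathcal{D}\mathcal{J}(K);\mathbb{Z})$, where $I(K)$ is the generalized Stanley-Reisner ideal defined just before that theorem as the kernel of the surjection $j^*$ induced by the inclusion $j\colon Z(K;(BS^1,\ast)) \subset \prod_{i} BS^1$.

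The one step requiring care, and the crux of the argument, is to recognize the generalized ideal $I(K)$ as the classical Stanley-Reisner ideal appearing in the statement. By its definition $I(K)$ is generated by the elements $x_{j_1}\cdots x_{j_l}$ with $x_{j_t}\in \bar{H}^*(BS^1;\mathbb{Z})$ attached to sequences $J = (j_1,\ldots,j_l)$ that are not simplices of $K$. Since $\bar{H}^*(BS^1;\mathbb{Z})$ is generated as a ring by the single class $x_i$, each such generator has the form $x_{j_1}^{a_1}\cdots x_{j_l}^{a_l}$ with exponents $a_t \geq 1$, and is therefore divisible by the squarefree monomial $x_J = x_{j_1}\cdots x_{j_l}$ indexed by the non-face $\{j_1,\ldots,j_l\}$; conversely every such squarefree monomial is itself a generator, obtained by setting all exponents equal to one. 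Hence the ideal they generate coincides with the ideal generated by the squarefree monomials $x_I$ of the non-faces of $K$, which is exactly the Stanley-Reisner ideal. Combining this identification with the isomorphism of the previous paragraph identifies $H^*(\mathcal{D}\mathcal{J}(K);\mathbb{Z})$ with the Stanley-Reisner ring $\mathbb{Z}[x_1,\ldots,x_m]/I(K)$, as claimed.
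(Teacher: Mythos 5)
Your proposal is correct and is exactly the paper's own route: the paper disposes of this theorem in Remark \ref{remark:cp.infinity}, observing that it is the special case of Theorem \ref{thm:cohomology.for.contractible.A} with $(X_i,x_i)=(BS^1,\ast)$. Your write-up simply makes explicit the details the paper leaves implicit — the K\"unneth hypothesis via the torsion-free finite-type cohomology of $BS^1$, and the identification of the generalized Stanley--Reisner ideal with the classical squarefree-monomial ideal — both of which you handle correctly.
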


\begin{remark}\label{remark:cp.infinity}
Theorem \ref{thm:denham.suciu.cohomology} is a special case of
Theorem \ref{thm:cohomology.for.contractible.A} where $(X_i,x_i) =
(BS^1, *)$.
\end{remark}

The decompositions for the suspensions of generalized moment-angle
complexes given in Theorem
\ref{thm:decompositions.for.general.moment.angle.complexes}
specialize to decompositions for $Z(K;(BG,\ast))$ as follows.
\begin{cor} \label{cor:split.toric.variety}
There is a homotopy equivalence
$$ \Sigma(Z(K;(BG,\ast)))\to \Sigma (\bigvee_{I \leq [m]}
\widehat{Z}(K_I; (BG,\ast))).$$ Thus in case $G = T = S^1,$ there is
an additive decompostion of Stanley-Reisner ring given by the
cohomology algebra of $\mathcal D\mathcal J(K) = ET^m
\times_{T^m}Z(K;(D^2,S^1))$ together with an isomorphism of
underlying abelian groups $$H^i(\mathcal D\mathcal J(K);\mathbb Z)
\to H^i(\bigvee_{I \in K} \widehat{(BS^1)^I}; \mathbb Z).$$
Furthermore, there is a decomposition
$$E_*(\mathcal D\mathcal J(K);\mathbb Z) \cong
E_*(\bigvee_{I \in K}\widehat{(BS^1)^I})$$ for any homology theory
$E_*$.
\end{cor}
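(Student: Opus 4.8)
The plan is to derive the first homotopy equivalence as a direct specialization of Theorem \ref{thm:decompositions.for.general.moment.angle.complexes}, and then to read off the additive statements in the case $G=S^1$ by pruning the wedge down to the simplices of $K$ and invoking the computation of $H^*(\mathcal D\mathcal J(K))$.

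First I would apply Theorem \ref{thm:decompositions.for.general.moment.angle.complexes} to the constant family $(X_i,A_i,x_i)=(BG,*,*)$. Restricting, as stipulated, to topological groups $G$ which are CW-complexes, $BG$ is a connected, pointed CW-complex and $(BG,*)$ is a pointed CW-pair, so the hypotheses are satisfied. Since $(\underline{X_I},\underline{A_I})=(BG,*)$ for every $I$, the theorem yields precisely the asserted equivalence $\Sigma(Z(K;(BG,*)))\to\Sigma(\bigvee_{I\leq[m]}\widehat{Z}(K_I;(BG,*)))$, establishing the first claim.

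Next I would specialize to $G=S^1$, so that $BG=BS^1$ and each sub-pair $(BS^1,*)$ has contractible second coordinate $A_i=*$. By Theorem \ref{thm:contractible.A}, the summand $\widehat{Z}(K_I;(BS^1,*))$ is contractible unless $K_I$ is a full simplex, i.e. unless $I\in K$, in which case it is the smash product $\widehat{(BS^1)^I}$. Hence, as already recorded in Theorem \ref{thm:more.contractible.A}, the wedge collapses to $\bigvee_{I\in K}\widehat{(BS^1)^I}$, and recalling $\mathcal D\mathcal J(K)=Z(K;(BS^1,*))$ from Definition \ref{defin:DJ.spaces} produces a homotopy equivalence $\Sigma(\mathcal D\mathcal J(K))\to\Sigma(\bigvee_{I\in K}\widehat{(BS^1)^I})$. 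Passing to (co)homology, a homotopy equivalence of single suspensions induces, through the suspension isomorphism, isomorphisms of reduced groups; since both spaces are connected these agree with the unreduced groups in every degree, and taking the isomorphism induced by a homotopy inverse gives the stated map $H^i(\mathcal D\mathcal J(K);\mathbb Z)\to H^i(\bigvee_{I\in K}\widehat{(BS^1)^I};\mathbb Z)$. Combining this with Theorem \ref{thm:denham.suciu.cohomology}, which identifies $H^*(\mathcal D\mathcal J(K);\mathbb Z)$ with the Stanley-Reisner ring $\mathbb Z[x_1,\ldots,x_m]/I(K)$, exhibits the promised additive decomposition. The identical suspension argument applies to any reduced homology theory $E_*$, the suspension isomorphism being one of the Eilenberg-Steenrod axioms, and thus yields $E_*(\mathcal D\mathcal J(K))\cong E_*(\bigvee_{I\in K}\widehat{(BS^1)^I})$.

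The point to stress here is not a genuine obstacle but a caveat inherent in the method: the splitting exists only after a single suspension, so it produces an isomorphism of underlying abelian groups (respectively of $E_*$-modules), \emph{not} of rings. The multiplicative structure of the Stanley-Reisner ring is not respected by this decomposition, which is exactly why the statement is phrased additively and why the ring structure must be supplied separately by Theorem \ref{thm:denham.suciu.cohomology}.
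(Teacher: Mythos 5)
Your proposal is correct and follows essentially the same route as the paper, which presents this corollary as an immediate specialization of Theorem \ref{thm:decompositions.for.general.moment.angle.complexes} to $(X_i,A_i)=(BG,*)$, pruned via Theorem \ref{thm:contractible.A} (equivalently Theorem \ref{thm:more.contractible.A}) to the simplices $I\in K$, with the identification $\mathcal D\mathcal J(K)=Z(K;(BS^1,*))$ from Definition \ref{defin:DJ.spaces} and the ring structure supplied by Theorem \ref{thm:denham.suciu.cohomology}. Your closing caveat, that the splitting is only additive and not multiplicative, matches the paper's intent exactly.
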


\begin{remark}\label{remark:bjorner.sarkaria}
An analogous algebraic decomposition was given by Bj\"orner and
Sarkaria in \cite{bjorner.sarkaria} for the special case of
$(D^2,S^1)$ and singular cohomology with integer coefficients. The
decomposition in Theorem \ref{thm:cohomology.for.contractible.A} is
also closed with respect to the Steenrod operations in mod-$p$
cohomology arising from the geometric splitting of the suspension.
It is not apparent that the decomposition of Bj\"orner and Sarkaria
after reduction mod-$p$ preserves the action of the mod-$p$ Steenrod
algebra.
\end{remark}

Some features of generalized moment-angle complexes together with
their stable decompositions in Theorem
\ref{thm:decompositions.for.general.moment.angle.complexes} extend
directly to simplicial spaces $X_*$ and their geometric realizations
denoted $|X_*|$. To do so, some notation is given next by analogy
with Definition \ref{defin:moment.angle.complex}.

\begin{defin}\label{defin:simplicial.macs}
Let $(\underline{X}_*,\underline{A}_*)$ denote the collection of
pairs of simplicial spaces $$\{(X_*(i), A_*(i))\}^m_{i=1}$$ and let
$(\underline{X}_n,\underline{A}_n)$ denote the collection of pairs
$\{(X_n(i), A_n(i))\}^m_{i=1}$ where $X_n(i)$ denotes the $n$-th
space in $X_*(i)$. Define
$$Z(K;(\underline{X}_*,\underline{A}_*))_n = Z(K;(\underline{X}_n,\underline{A}_n)).$$
\end{defin}

The next lemma gives the property that the collection of spaces
$Z(K;(\underline{X}_*,\underline{A}_*))_n $ for all $n \geq 0$ is
naturally a simplicial space which is denoted by
$Z(K;(\underline{X}_*,\underline{A}_*))$.
\begin{lem} \label{lem:simplicial.moment.angle}
The natural inclusions
$$i_n:Z(K;(\underline{X}_*,\underline{A}_*))_n =Z(K;(\underline{X}_n,\underline{A}_n)) \to X_n(1) \times \ldots \times X_n(m)$$
are closed with respect to the face and degeneracy operations in the
product $$X_*(1) \times \ldots \times X_*(m).$$ That is the
following properties are satisfied.
\begin{enumerate}
\item $d_i: Z(K; (\underline{X}_*,\underline{A}_*)_n \to Z(K; (\underline{X}_*,\underline{A}_*))_{n-1}$ and
\item $s_j: Z(K; (\underline{X}_*,\underline{A}_*)_n \to Z(K; (\underline{X}_*,\underline{A}_*))_{n+1}.$
\end{enumerate}

\noindent Hence
\begin{enumerate}
\item $s_j(x) \in A_{n+1}(i)$ if and only if $x \in A_{n}(i)$, and
\item the natural inclusion gives a map of simplicial space
$$i_*:Z(K; (\underline{X}_*,\underline{A}_*)) \to X_*(1) \times \ldots \times X_*(m).$$
\end{enumerate}
\end{lem}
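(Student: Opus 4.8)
The plan is to reduce everything to a single observation: each $A_*(i)$ is by hypothesis a simplicial subspace of $X_*(i)$, so the face and degeneracy operators of $X_*(i)$ carry $A_n(i)$ into $A_{n-1}(i)$ and into $A_{n+1}(i)$ respectively. The product simplicial space $X_*(1)\times\cdots\times X_*(m)$ has its operators acting coordinate-wise, $d_k(y_1,\ldots,y_m)=(d_ky_1,\ldots,d_ky_m)$ and likewise for $s_k$, so it suffices to check that these coordinate-wise operators preserve each summand $D_n(\sigma)$ of the moment-angle complex. Recall that
$$D_n(\sigma)=\prod_{i=1}^m Y_n(i),\qquad Y_n(i)=X_n(i)\ \text{for}\ i\in\sigma,\quad Y_n(i)=A_n(i)\ \text{for}\ i\notin\sigma.$$
Given $(y_1,\ldots,y_m)\in D_n(\sigma)$ and a face operator $d_k$, each coordinate $d_ky_i$ lies in $X_{n-1}(i)$, while for $i\notin\sigma$ we have $y_i\in A_n(i)$ and hence $d_ky_i\in A_{n-1}(i)$ because $A_*(i)$ is simplicial. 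Thus $d_k$ maps $D_n(\sigma)$ into $D_{n-1}(\sigma)$, and the identical argument with $s_k$ in place of $d_k$ shows that $s_k$ maps $D_n(\sigma)$ into $D_{n+1}(\sigma)$.

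Since $Z(K;(\underline{X}_n,\underline{A}_n))=\bigcup_{\sigma\in K}D_n(\sigma)$, taking the union over $\sigma\in K$ yields at once the two asserted restriction properties, namely that $d_k$ and $s_k$ carry $Z(K;(\underline{X}_*,\underline{A}_*))_n$ into $Z(K;(\underline{X}_*,\underline{A}_*))_{n-1}$ and into $Z(K;(\underline{X}_*,\underline{A}_*))_{n+1}$ respectively. The simplicial identities (for instance $d_id_j=d_{j-1}d_i$ for $i<j$, and the rest) then hold for the restricted operators automatically, since they already hold in the ambient product and the inclusions $i_n$ are injective. This simultaneously exhibits $Z(K;(\underline{X}_*,\underline{A}_*))$ as a simplicial space and shows that $i_*$ is a map of simplicial spaces, which is conclusion (2).

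For the equivalence $s_j(x)\in A_{n+1}(i)\iff x\in A_n(i)$, the forward implication is exactly the statement that the degeneracy $s_j$ of $X_*(i)$ carries $A_n(i)$ into $A_{n+1}(i)$, which holds because $A_*(i)$ is a simplicial subspace. For the reverse implication I would invoke the simplicial identity $d_js_j=\mathrm{id}$: if $s_j(x)\in A_{n+1}(i)$, then applying $d_j$, which preserves $A_*(i)$, gives $x=d_js_j(x)\in A_n(i)$.

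The argument is a routine verification, and the only point requiring genuine care is the reverse direction of the equivalence, where one must select the correct simplicial identity, $d_js_j=\mathrm{id}$, in order to recover $x$ from its degeneracy while remaining inside the simplicial subspace $A_*(i)$. Everything else follows formally from the coordinate-wise description of the operators together with the hypothesis that each $A_*(i)$ is a simplicial subspace of $X_*(i)$.
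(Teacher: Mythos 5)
Your proof is correct and follows essentially the same route as the paper's own argument: both verify the restriction properties coordinate-wise using the hypothesis that each $A_*(i)$ is a simplicial subspace of $X_*(i)$, and both establish the biconditional $s_j(x)\in A_{n+1}(i)\iff x\in A_n(i)$ via the simplicial identity $x=d_js_j(x)$. The only cosmetic difference is that you organize the verification simplex-by-simplex through the pieces $D_n(\sigma)$ before taking the union over $\sigma\in K$, whereas the paper phrases the membership condition directly, but the content is identical.
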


The geometric realization of a simplicial space $X_*$ is recalled
next for convenience of the reader.
\begin{defin}\label{defin:geometric.realization}
The geometric realization $|X_*|$ is the quotient
$$ \bigcup_{0 \leq i}\Delta[i] \times X_i/(\sim)$$ where
$\Delta[i]$ denotes the $i$-simplex and $\sim$ is the equivalence
relation generated by

\begin{enumerate}
\item $(u,d_ix)\sim (\partial_i u,x)$ for $u \in \Delta[r-1]$
and $x \in X_r$, and
\item $(\sigma_j(v),y)\sim (v,s_j(x))$ for $v \in \Delta[r+1]$
and $y \in X_r$.
\end{enumerate} Let $[u,x]$ in $|X_*|$ denote the equivalence class
of the pair $(u,x)$ in $\Delta[r] \times X_r$.
\end{defin}

Given the collection of pairs of simplicial spaces
$$(\underline{X}_*,\underline{A}_*) = \{(X_*(i), A_*(i))\}^m_{i=1},$$
let $(\underline{|X_*|},\underline{|A_*|})$ denote the collection of
spaces $\{(|X_*(i)|,|A_*(i)|)\}^m_{i=1}$ where $|X_*(i)|$ and
$|A_*(i)|$ denote the respective geometric realizations.

It is a classical fact \cite{milnor3} that there are homeomorphisms
$$\pi: |X_*(1)\times \ldots \times X_*(m)| \to |X_*(1)|\times \ldots
\times |X_*(m)|$$ for simplicial spaces $X_*(i)$ for $ 1 \leq i \leq
m$ in case either of the following are satisfied.
\begin{enumerate}
  \item Each $X_*(i)$ is countable, or
  \item the spaces $|X_*(i)|$ are locally finite.
\end{enumerate} A variation in the underlying point-set topology is given in
\cite{may} where it suffices to assume that all spaces are compactly
generated and weak Hausdorff.

The homeomorphism $\pi$ is induced by the product of the natural
projection maps $$\pi_j:X_*(1)\times \ldots \times X_*(m) \to
X_*(j)$$ which gives a map $$|\pi_j|:|X_*(1)\times \ldots \times
X_*(m)| \to |X_*(j)|.$$ The inverse of $\pi = |\pi_1|  \times \ldots
\times |\pi_m|$ $$\eta:\prod_{1 \leq i \leq m} |X_*(i)| \to |
\prod_{1 \leq i \leq m} X_*(i)|$$ is induced by sending the class of
$\prod_{1 \leq i \leq m} |u_i,x_i|$ to $|w,\prod_{1 \leq i \leq m}
s_{J_i}(x_i)|$ where $w$ and the possibly iterated degeneracies
$s_{J_i}(x_i)$ are specified in \cite{milnor3}.

Since the natural inclusion $i_n:Z(K;(X_n,A_n)) \to X_n(1) \times
\ldots \times X_n(m)$ is a monomorphism of simplicial spaces by
Lemma \ref{lem:simplicial.moment.angle}, the realization
$$|Z(K; (\underline{X}_*,\underline{A}_*))|$$ is a subspace of $|X_*(1) \times \ldots \times  X_*(m)|$.
Furthermore, let $$i:Z(K;(|\underline{X}|_*,|\underline{A}|_*)) \to
|X_*(1)| \times \ldots \times |X_*(m)|.$$ denote the natural
inclusion. The following statement is proven by repeating a proof in
\cite{milnor3}.
\begin{thm} \label{thm:simplicial.moment.angle.15.march.2007}
There is a commutative diagram
\[
\begin{CD}
 Z(K;(|\underline{X}|_*,|\underline{A}|_*)) @>{h}>>   |Z(K; (\underline{X}_*,\underline{A}_*))|\\
@VV{i}V          @VV{|i_*|}V          \\
|X_*(1)|\times \ldots \times |X_*(m)| @>{\eta}>> |X_*(1) \times
\ldots \times X_*(m)|
\end{CD}
\] where $h$ is the restriction of $\eta$, $$h =
\eta|_{|Z(K;\underline{X_*})|}.$$

Furthermore, if either
\begin{enumerate}
  \item the simplicial spaces $X_*(i)$ are countable,
  \item the spaces $|X_*(i)|$ are locally finite, or
  \item the spaces $X_n(i)$ are assumed to be compactly generated and weak
  Hausdorff,
\end{enumerate}
then the map $$h:Z(K;(|\underline{X}|_*,|\underline{A}|_*)) \to
|Z(K; (\underline{X}_*,\underline{A}_*))|
$$ is a homeomorphism.
\end{thm}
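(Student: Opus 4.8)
The plan is to construct $h$ directly as the restriction of Milnor's comparison homeomorphism $\eta\colon\prod_i|X_*(i)|\to|\prod_i X_*(i)|$, and to prove that $\eta$ carries the subspace $Z(K;(|\underline{X}|_*,|\underline{A}|_*))$ onto the subspace $|Z(K;(\underline{X}_*,\underline{A}_*))|$. Under any of the three point-set hypotheses, \cite{milnor3, may} already supply that $\eta$ is a homeomorphism with explicit inverse $\pi=|\pi_1|\times\cdots\times|\pi_m|$; thus the entire content of the theorem reduces to checking that $\eta$ and $\pi$ respect the moment-angle condition coordinate-by-coordinate, together with the fact (flowing from the monomorphism property in Lemma~\ref{lem:simplicial.moment.angle}) that $|i_*|$ realizes $|Z(K;(\underline{X}_*,\underline{A}_*))|$ as a genuine subspace of $|\prod_i X_*(i)|$. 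Commutativity of the square is then tautological, since $h$ is by definition the corestriction of $\eta\circ i$ along $|i_*|$, so $|i_*|\circ h=\eta\circ i$.

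First I would verify that $\eta$ lands inside $|Z(K;(\underline{X}_*,\underline{A}_*))|$. A point of $Z(K;(|\underline{X}|_*,|\underline{A}|_*))$ is a tuple $(z_1,\dots,z_m)\in\prod_i|X_*(i)|$ admitting a simplex $\sigma\in K$ with $z_i\in|A_*(i)|$ for every $i\notin\sigma$. Since $|A_*(i)|$ is the realization of the simplicial subspace $A_*(i)$, each such $z_i$ has a representative $z_i=|u_i,x_i|$ with $x_i\in A_{r_i}(i)$, while for $i\in\sigma$ we take any representative $x_i\in X_{r_i}(i)$. Milnor's inverse-product formula of \cite{milnor3} then writes $\eta(z_1,\dots,z_m)=|w,(s_{J_1}(x_1),\dots,s_{J_m}(x_m))|$, where the $s_{J_i}$ are iterated degeneracies raising each $x_i$ to one common simplicial level $n$. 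The decisive input is the equivalence $s_j(x)\in A_{n+1}(i)\iff x\in A_n(i)$ from Lemma~\ref{lem:simplicial.moment.angle}, which iterates to give $s_{J_i}(x_i)\in A_n(i)$ for each $i\notin\sigma$. Hence $(s_{J_1}(x_1),\dots,s_{J_m}(x_m))\in Z(K;(\underline{X}_n,\underline{A}_n))$, so $\eta(z_1,\dots,z_m)$ represents a point of $|Z(K;(\underline{X}_*,\underline{A}_*))|$.

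The reverse containment is the mirror image. A point of $|Z(K;(\underline{X}_*,\underline{A}_*))|$ has the form $|w,y|$ with $y=(y_1,\dots,y_m)\in Z(K;(\underline{X}_n,\underline{A}_n))$, so some $\sigma\in K$ satisfies $y_i\in A_n(i)$ for all $i\notin\sigma$. Applying $\pi=|\pi_1|\times\cdots\times|\pi_m|$ sends $|w,y|$ to $(|w,y_1|,\dots,|w,y_m|)$, whose $i$-th coordinate lies in $|A_*(i)|$ whenever $i\notin\sigma$; thus $\pi(|w,y|)\in Z(K;(|\underline{X}|_*,|\underline{A}|_*))$. Because $\pi=\eta^{-1}$, the two containments force $\eta\bigl(Z(K;(|\underline{X}|_*,|\underline{A}|_*))\bigr)=|Z(K;(\underline{X}_*,\underline{A}_*))|$, so the restriction $h$ is a bijection with inverse the restriction of $\pi$. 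As both moment-angle complexes carry the subspace topologies from $\prod_i|X_*(i)|$ and $|\prod_i X_*(i)|$ respectively, and $\eta,\pi$ are continuous under hypotheses (1)--(3), the map $h$ is a homeomorphism.

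I expect the main obstacle to be the bookkeeping in the first containment: one must check that the single common level $n$ and the iterated degeneracies $s_{J_i}$ produced by Milnor's inverse formula can be chosen simultaneously for all $m$ coordinates without disturbing the $A$-membership pattern prescribed by $\sigma$. This is precisely where the degeneracy-compatibility clause of Lemma~\ref{lem:simplicial.moment.angle} is indispensable, and where one must be careful to select, before applying any degeneracies, a representative $(u_i,x_i)$ of each $z_i$ with $x_i\in A_{r_i}(i)$ for every $i\notin\sigma$. The residual point-set issues---that $\eta$ is a homeomorphism and that $|i_*|$ is a closed embedding---are dealt with by transcribing the arguments of \cite{milnor3} under the stated countability, local-finiteness, or compact-generation hypotheses, exactly as the statement anticipates.
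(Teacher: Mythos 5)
Your proposal is correct and takes essentially the same route as the paper's own proof: both restrict Milnor's homeomorphism $\eta$ (with inverse $\pi$), invoke the degeneracy compatibility of Lemma~\ref{lem:simplicial.moment.angle} to see that the $A$-membership pattern is preserved in both directions, and conclude that the bijective restriction $h$ is a homeomorphism. Your explicit two-containment argument via $\pi$ simply spells out what the paper compresses into the assertion that $h$ is a continuous, bijective, open map.
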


\section{Toric manifolds}\label{toric manifolds}

This section is a brief introduction to properties of toric
manifolds with an emphasis on relationships with moment-angle
complexes $Z(K;(D^2,S^1))$. Following Davis-Januszkiewicz
\cite{davis.jan} and Buchstaber-Panov \cite{panov} define a toric
manifold starting with $T^n$ the real $n$-torus
$$T^n=S^1\times\ldots \times S^1\qquad (n \quad\hbox{\rm factors}).$$
Notice that $T^n$ acts on $\mathbb C^n$, $n$-space by:
$$(t_1,\ldots,t_n)\cdot (z_1,\ldots,z_n)=(t_1z_1,t_2z_2,\ldots,t_nz_n)$$
where $t_iz_i$ is the natural multiplication of two complex numbers.
The quotient space $\mathbb C^n/T^n$ is homeomorphic to
$$\mathbb R^n_+=\{(x_1,\ldots,x_n)| \ x_i\in R, x_i\geq 0\}.$$

Let $M^{2n}$ be a $2n$-manifold. Say that $T^n$ acts on $M^{2n}$
with {\it locally standard action} if (i) every point $x\in M^{2n}$
has a coordinate neighborhood $U_x$ invariant under the action of
$T^n$, (ii) there is a homeomorphism $\varphi_x:U_x\to V_x\subset
\mathbb R^{2n}=\mathbb C^n$ with $V_x$ open, and (iii) for every
$t\in T^n$ and $y\in U_x$, the following are equal
$$\varphi_x(t\cdot y)=\alpha_x(t)\cdot\varphi_x(y)$$
where $\alpha_x$ is an automorphism of $T^n$ with the standard
action on the right-hand side. It follows that $M^{2n}/T^n$ is the
union of spaces $\mathbb R^n_+$. Say that $M^{2n}$ is {\it a toric
manifold over $P^n$}, if the orbit space is homeomorphic to a simple
$n$-polytope $P^n$.

Recall that $P^n$ can be described as the intersection of $m$
half-spaces. Notice that $P^n$ has faces of codimension $1$ to $n$.
Assume that there are $m$ codimension-$1$ faces, $F_1,\ldots, F_m$
and if the intersection $F_{i_0}\cap\ldots\cap F_{i_k} \neq
\emptyset$ for $1 \leq i_0 < i_1 < \ldots < i_k$, it is a
codimension $k$-face.

Let $\sigma=(i_0,\ldots,i_k)$ denote an increasing sequence in
$[m]$. Consider the intersection defined by $F_\sigma =
F_{i_0}\cap\ldots\cap F_{i_k}$ which will be assumed to be
non-empty. Then the set of all such $\sigma$ for which $F_\sigma $
is not empty defines a simplicial complex $K$ ( which contains the
empty set by definition ). If $K$ contains an element $\sigma$ which
is not the empty sequence, then $K$ is the dual complex of the
boundary complex of the simple convex polytope $P^n$. Moreover,
$|K|=S^{n-1}$. Note that this particular $K$ is denoted $K_P$ in
\cite{davis.jan}, and \cite{buchstaber.panov.2}.

Notice that $T^n$ acts on $M^{2n}$. Let $\pi:M^{2n}\to P^n$ denote
the quotient map. Observe that $T^n$ acts on $\pi^{-1}({\rm
Int}(F_\sigma))$ with constant isotropy subgroup, denoted
$\lambda(F_\sigma)$, a $(k+1)$-subtorus of $T^n$, where
$k+1=|\sigma|=$ length of $\sigma$ and ${\rm Int}(F_\sigma)$ denotes
the interior of $F_\sigma$.

In particular $T^n$ acts freely on $\pi^{-1}({\rm Int}(P^n))$.
However, $T^n$ acting on $M^{2n}$ has fixed points which are the
inverse images of the vertices of $P^n$. The $1$-dimensional tori
$\lambda(F_i)$ are uniquely represented by
$$\lambda(F_i)=\{(e^{2\pi i\lambda_{i1}},\ldots, e^{2\pi i\lambda_{in}})\}$$
for $i=1,\ldots, m$, where
$\lambda_i=(\lambda_{i1},\ldots,\lambda_{in})$ is a primitive vector
of $\mathbb Z^n$, that is $\lambda_i$ can be extended to a basis for
$\mathbb Z^n$. Since $P^n$ is a simple polytope, every face is given
by an unique intersection of codimension-$1$ faces, the vectors
$(\lambda_{i1},\ldots,\lambda_{ik})$ are linearly independent for
$k\leq n$ if $F_{\sigma}$ is a face of $P^n$.

Consider the family of codimension-$2$ submanifolds
$M(i)=\pi^{-1}(F_i)$. Define $$M(\sigma) = M(i_0)\cap \ldots\cap
M(i_k)$$ where $\sigma=(i_0,\ldots,i_k)$. Since this intersection is
tranverse,  $M(\sigma) = \pi^{-1}(F_\sigma)$ is a
codimension-$2(k+1)$ submanifold of $M^{2n}$.

The $(m\times n)$-matrix $\wedge=(\lambda_{ij})$ gives an
epimorphism $\lambda:T^m\to T^n$ such that if $$T^m_{\sigma}
=\{(t_1,\ldots,t_m) | \ t_i=1, \  \hbox{if} \ i\notin \sigma \}$$ is
a natural subgroup of $T^m$, then $\lambda|T^m_\sigma:T^m_\sigma\to
\lambda(F_\sigma)$ is an isomorphism. The kernel of $\lambda$
denoted $\hbox{Ker}(\lambda)$, acts freely on $Z(K;(D^2,S^1)$, where
$K$ is the dual of $\partial P^n$ and there is a map
$$Z(K;(D^2,S^1))/\hbox{Ker}(\lambda)\to M^{2n}$$ which is a diffeomorphism.
Furthermore, the epimorphism $\lambda:T^m \to T^n$ is split so that
$T^m$ is isomorphic to $\hbox{Ker}(\lambda) \times T^n$ as a
topological group. A direct consequence is recorded next for use
below.

\begin{lem} \label{lem:fuss.budget.lemma}
The induced map $$\rho: ET^m\times_{T^m} Z(K;(D^2,S^1)) \to
ET^n\times_{T^n} M^{2n}$$ is a homeomorphism.

\end{lem}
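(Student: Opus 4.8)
The plan is to compute the left-hand homotopy orbit space in two stages dictated by the splitting $T^m\cong N\times T^n$, where I abbreviate $N=\mathrm{Ker}(\lambda)$ and $Z=Z(K;(D^2,S^1))$. Because the extension $1\to N\to T^m\xrightarrow{\lambda}T^n\to 1$ splits, I may take the product model $ET^m=EN\times ET^n$, on which $T^m=N\times T^n$ acts one factor at a time and $\lambda$ is the projection to the second factor. The first step is then the purely formal regrouping of the $T^m$-quotient: collecting the two $N$-coordinates yields a genuine homeomorphism
$$ET^m\times_{T^m}Z=(EN\times ET^n\times Z)/(N\times T^n)\;\cong\;ET^n\times_{T^n}\bigl(EN\times_N Z\bigr),$$
where $T^n$ acts on $EN\times_N Z$ through its residual action on $Z$ and trivially on $EN$. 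This reduces the lemma to understanding the $T^n$-space $EN\times_N Z$ and comparing it with $M^{2n}$.

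The second step brings in the two structural facts recorded above. Since $N$ acts freely on $Z$, the orbit map $Z\to Z/N$ is a principal $N$-bundle, so the projection $p\colon EN\times_N Z\to Z/N$ is a fibre bundle with contractible fibre $EN$, and it is $T^n$-equivariant for the residual actions on source and target. Composing $p$ with the diffeomorphism $Z/N\cong M^{2n}$ supplied earlier produces a $T^n$-equivariant map $EN\times_N Z\to M^{2n}$. Applying the functor $ET^n\times_{T^n}(-)$ to this map gives exactly the asserted $\rho$,
$$\rho\colon ET^n\times_{T^n}\bigl(EN\times_N Z\bigr)\longrightarrow ET^n\times_{T^n}M^{2n},$$
described on representatives by $[\,e_1,e_2,z\,]\mapsto[\,e_2,[z]\,]$ with $e_1\in EN$, $e_2\in ET^n$ and $[z]\in Z/N\cong M^{2n}$; one checks this is well defined on $T^m$-orbits because $N$ acts trivially on $Z/N$ while $T^n$ acts compatibly on $ET^n$ and on $M^{2n}$.

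To finish I would analyse $\rho$ fibrewise over $BT^n$, where its effect is precisely to collapse the contractible bundle fibre $EN$ of $p$, compatibly with the $T^n$-action. I expect the main obstacle to lie exactly here. The hypotheses of Lemma~\ref{lem:fuss.budget.lemma} make $N=\mathrm{Ker}(\lambda)$ a compact torus acting freely and properly on the compact manifold $Z$, so that $Z\to Z/N$ is a locally trivial principal bundle and the fibrewise collapse of the contractible factor $EN$ can be carried out continuously and $T^n$-equivariantly; the delicate point is to verify that this collapse, assembled with the regrouping homeomorphism of the first step, yields a bijective bicontinuous map and not merely the evident $T^n$-equivariant homotopy equivalence coming from the contractibility of $EN$. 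It is at this stage that the splitting of $\lambda$ (which cleanly separates the factor $EN$) and the point-set regularity of the torus action enter in an essential way, and I would devote the bulk of the argument to confirming that the induced map on homotopy orbit spaces is a homeomorphism rather than a weak equivalence.
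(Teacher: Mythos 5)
Your reduction is exactly the paper's: split $T^m\cong N\times T^n$ with $N=\mathrm{Ker}(\lambda)$, take the product model $ET^m=EN\times ET^n$, regroup to get the homeomorphism $ET^m\times_{T^m}Z\cong ET^n\times_{T^n}\bigl(EN\times_N Z\bigr)$ for $Z=Z(K;(D^2,S^1))$, and then compare $EN\times_N Z$ with $Z/N\cong M^{2n}$. Up to the point where you stop, your argument is correct and coincides with the paper's proof in all essentials.

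However, the step you deferred as ``the delicate point'' is not delicate---it is impossible, and your suspicion was well founded. In the product model, two classes $[(e_1,e_2,z)]$ and $[(e_1',e_2,z)]$ in $(EN\times ET^n\times Z)/(N\times T^n)$ are equal only if $e_1=e_1'$: an identification $(e_1s,e_2t,z\cdot(s,t))=(e_1',e_2,z)$ forces $t=1$ because $T^n$ acts freely on $ET^n$, and then $z\cdot(s,1)=z$ forces $s=1$ because $N$ acts freely on $Z$. Consequently the fibre of $\rho$ over a point $[e_2,[z]]$ is an entire copy of $EN$, so $\rho$ is very far from injective; it is a fibre bundle with contractible fibre $EN$, hence a homotopy equivalence, but it cannot be a homeomorphism no matter how much point-set care is invested. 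You should also know that the paper's own proof commits precisely the oversight you isolated: it asserts, with no justification, that $ET^m\times_{T^m}Z$ is ``homeomorphic'' to $ET^n\times_{T^n}\bigl(Z/\mathrm{Ker}(\lambda)\bigr)$, which is exactly the collapse of the $EN$ factor. The honest completion of your argument is therefore to weaken the conclusion of Lemma \ref{lem:fuss.budget.lemma} to ``homotopy equivalence'' (equivariantly over $BT^n$), not to strengthen your final step; nothing downstream is harmed, since the paper only ever uses the weaker statement---the proof of Theorem \ref{thm:cohomology.toric.manifolds} invokes only that $\rho$ is a homotopy equivalence.
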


\begin{proof}
Observe that $\lambda:T^m\to T^n$ induces a principal fibration
$$\hbox{Ker}(\lambda) \to Z(K;(D^2,S^1))\to M^{2n}.$$
Next, observe that $T^m$ splits as $T^n \times \hbox{Ker}(\lambda)$
as a topological group. Notice that (i) $T^m$ acts on
$Z(K;(D^2,S^1))$, and (ii) the map
$$Z(K;(D^2,S^1))/\hbox{Ker}(\lambda)\to M^{2n}$$ is a diffeomorphism.
Thus $T^m$ acts on $Z(K;(D^2,S^1))$ with $$ET^m\times_{T^m}
Z(K;(D^2,S^1))$$ homeomorphic to $$ET^n\times_{T^n}
Z(K;(D^2,S^1))/\hbox{Ker}(\lambda).$$ The Lemma follows.
\end{proof}

This setting is clearer with the following equivalent construction
of $M^{2n}$ and $Z(K;(D^2,S^1))$ due to Davis and Januzkiewicz in
\cite{davis.jan}. Here, $M^{2n}$ is the quotient of $T^n\times
P^n/\sim$ where $\sim$ is the equivalence relation:
$$(t,x)\sim (t',x')\quad{\rm if}\quad x=x'\quad{\rm and}\quad t't^{-1}\in\lambda(F_\sigma),$$
where $F_\sigma$ is the unique face such that $x\in$
Int$(F_\sigma)$. Now $Z(K;(D^2,S^1))\approx T^m\times P^m/\sim$,
where $\sim$ is the equivalence relation $$(t,x)\sim (t',x')$$ if
$x=x'$ and $t't^{-1}\in T^m_\sigma$ where $x$ lies in the interior
of the unique face, $F_{\sigma}$.

Since the map $$\rho: ET^m\times_{T^m} Z(K;(D^2,S^1)) \to
ET^n\times_{T^n} M^{2n}$$ is a homeomorphism by Lemma
\ref{lem:fuss.budget.lemma}, and is equivariant with respect to the
natural $T^m$-action, it follows that there is a morphism of
fibrations (up to homotopy) as given next.
\[\begin{CD}
\hbox{Ker}(\lambda)  &@>>>  &E(\hbox{Ker}(\lambda)) &@>>>&B(\hbox{Ker}(\lambda)) \\
@VV{ }V &&@VVV&&@VVV\\
Z(K; (D^2,S^1))&@>j_1>> &ET^m\times_{T^m} Z(K;(D^2,S^1))&
@>\pi_1>>&BT^m \\
@VV{ }V&&    @VV{\rho }V    &&@VV B\lambda V\\
M^{2n}&@>j_2>> &ET^n\times_{T^n} M^{2n}&@>\pi_2>>&BT^n
\end{CD}\]

One consequence of this diagram of fibrations and earlier remarks is
the following Theorem concerning the cohomology of toric
manifolds,\cite{buchstaber.panov.2} or \cite{Danilov}.

\begin{thm}\label{thm:cohomology.toric.manifolds}
The cohomology ring $H^*(M^{2n};\mathbb Z)$ is isomorphic to
$$\mathbb Z[x_1,\ldots,x_m]/(I(K)+I(\lambda))$$ where $I(\lambda)$ is
the ideal generated by $\pi_2^*(H^2(BT^n))$.
\end{thm}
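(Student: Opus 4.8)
The plan is to read off the cohomology of the toric manifold $M^{2n}$ from the tower of fibrations displayed just above the statement, using the computation of $H^*(\mathcal{D}\mathcal{J}(K))$ already in hand.

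First I would record the input. By Lemma~\ref{lem:fuss.budget.lemma} the middle vertical map $\rho$ is a homeomorphism, so $H^*(ET^n \times_{T^n} M^{2n};\mathbb{Z}) \cong H^*(ET^m \times_{T^m} Z(K;(D^2,S^1));\mathbb{Z})$. The latter space is exactly the Davis--Januszkiewicz space $\mathcal{D}\mathcal{J}(K) = Z(K;(BS^1,*))$ by Definition~\ref{defin:DJ.spaces}, whose integral cohomology is the Stanley--Reisner ring $\mathbb{Z}[x_1,\ldots,x_m]/I(K)$ by Theorem~\ref{thm:denham.suciu.cohomology}. So the Borel construction on $M^{2n}$ has cohomology $\mathbb{Z}[x_1,\ldots,x_m]/I(K)$, with the classes $x_i$ pulled back along $\pi_1^*$ from $H^*(BT^m)$.

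Next I would analyze the bottom row of the diagram, the fibration $M^{2n} \xrightarrow{j_2} ET^n \times_{T^n} M^{2n} \xrightarrow{\pi_2} BT^n$, and argue that its Serre spectral sequence collapses. The key geometric fact is that $M^{2n}$ has a $T^n$-invariant cell structure (or equivalently, since $M^{2n}/T^n = P^n$ is a simple polytope, its cohomology is concentrated in even degrees and torsion-free, by Davis--Januszkiewicz). Concentration in even degrees forces $E_2 = E_\infty$ for dimension reasons, since differentials raise total degree by one and all nonzero groups sit in even total degree. Collapse then gives a short exact sequence of $H^*(BT^n)$-modules realizing $H^*(ET^n\times_{T^n}M^{2n})$ as a free module over $H^*(BT^n) = \mathbb{Z}[t_1,\ldots,t_n]$ with fiber $H^*(M^{2n})$. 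Consequently $H^*(M^{2n})$ is obtained from $H^*(ET^n\times_{T^n}M^{2n}) = \mathbb{Z}[x_1,\ldots,x_m]/I(K)$ by killing the image of the base, that is by dividing out the ideal $I(\lambda) = (\pi_2^*H^2(BT^n))$ generated by the regular sequence coming from the linear forms $B\lambda^*(t_j) = \sum_i \lambda_{ij} x_i$. This yields the asserted isomorphism $H^*(M^{2n};\mathbb{Z}) \cong \mathbb{Z}[x_1,\ldots,x_m]/(I(K)+I(\lambda))$.

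The main obstacle is justifying that restricting to the fiber $M^{2n}$ corresponds precisely to quotienting by $I(\lambda)$, rather than merely that $H^*(M^{2n})$ is a quotient of $\mathbb{Z}[x_1,\ldots,x_m]/I(K)$. This requires knowing that the $n$ linear forms $\sum_i \lambda_{ij}x_i$ form a regular sequence in the Stanley--Reisner ring $\mathbb{Z}[x_1,\ldots,x_m]/I(K)$, equivalently that they constitute a linear system of parameters; this is where the nonsingularity hypothesis on $M^{2n}$ enters, encoded in the linear independence of the $\lambda_i$ over each face (the smoothness condition stated in Section~\ref{toric manifolds}). Given that the $\lambda$-forms are a regular sequence and the edge homomorphism $\pi_2^*$ is injective (immediate from collapse), a comparison of Poincar\'e series on both sides shows the surjection $\bigl(\mathbb{Z}[x_1,\ldots,x_m]/I(K)\bigr)/I(\lambda) \to H^*(M^{2n})$ is an isomorphism. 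I would close by remarking that the integrality of the answer, and the freeness over $H^*(BT^n)$, are exactly the torsion-free, even-dimensional properties guaranteed by the toric structure, so no field-coefficient hypothesis is needed.
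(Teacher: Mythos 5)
Your proposal follows essentially the same route as the paper's proof: identify the Borel construction with the Davis--Januszkiewicz space via the homeomorphism $\rho$ of Lemma \ref{lem:fuss.budget.lemma} (so its cohomology is the Stanley--Reisner ring by Theorem \ref{thm:denham.suciu.cohomology}), then use the even-degree concentration of $H^*(M^{2n})$ to collapse the Serre spectral sequence of the fibration over $BT^n$ and read off $H^*(M^{2n})$ as the quotient by the ideal generated by the image of $\bar{H}^*(BT^n)$. The only divergence is your final injectivity discussion: the paper obtains this directly from the multiplicative collapse (the kernel of restriction to the fibre is the filtration ideal, which coincides with the ideal generated by $\pi_2^*\bar{H}^*(BT^n)$ because $E_\infty$ is a free module over $H^*(BT^n)$), so no separate regular-sequence or Poincar\'e-series comparison is needed --- and indeed over $\mathbb{Z}$ such a rank count alone would only show the kernel is torsion.
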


\begin{proof}

First recall that (i) the space $E(\hbox{Ker}(\lambda))$ is
contractible, (ii) the sequence of maps
\[
\begin{CD}
E(\hbox{Ker}(\lambda))   @>{}>>   ET^m\times_{T^m} Z(K;(D^2,S^1))
@>{\rho}>> ET^n\times_{T^n} M^{2n}
\end{CD}
\] is a fibration, and (iii) the map
$$\rho: ET^m\times_{T^m} Z(K;(D^2,S^1)) \to ET^n\times_{T^n} M^{2n}$$
is a homotopy equivalence. Consider the composite map
\[
\begin{CD}
ET^m\times_{T^m} Z(K;(D^2,S^1)) @>{\rho}>> ET^n\times_{T^n} M^{2n}
@>{\pi_2}>> BT^n
\end{CD}
\] and observe that this composite $$\pi_2 \circ \rho$$ is again a fibration with fibre $M^{2n}$
as $\rho$ is an equivalence.

By appealing to Morse theory as in \cite{davis.jan}, it follows that
$M^{2n}$ has cells only in even dimensions. The $E_2$-term of Serre
spectral sequence for the fibration
\[
\begin{CD}
M^{2n}  @>{\pi_2 \circ \rho}>> ET^m\times_{T^m} Z(K;(D^2,S^1))
@>{\pi_2 \circ \rho}>>  BT^n
\end{CD}
\] is given by $$H^*( BT^n) \otimes H^*(M^{2n}).$$ Since all
cohomology groups are concentrated in even degrees, the spectral
sequence collapses at $E_2$. It follows that the cohomology of
$H^*(M^{2n})$ is the quotient of the cohomology ring of
$ET^m\times_{T^m} Z(K;(D^2,S^1))$ modulo the two-sided ideal
generated by the image in reduced cohomology of the map
\[
\begin{CD}
\bar{H}^*(BT^n) @>{(\pi_2 \circ \rho)^*}>>\bar{H}^*(ET^m\times_{T^m}
Z(K;(D^2,S^1))).
\end{CD}
\]

Since $\rho$ is a homotopy equivalence ( in fact a homeomorphism ),
the previous statement is equivalent to the statement that the
cohomology of $H^*(M^{2n})$ is the quotient of the cohomology ring
of $ET^n\times_{T^n}Z(K;(D^2,S^1))$ modulo the two-sided ideal
generated by the image in reduced cohomology of the map
\[
\begin{CD}
\bar{H}^*(BT^n) @>{\rho^*}>>\bar{H}^*(ET^m\times_{T^m}
Z(K;(D^2,S^1))).
\end{CD}
\]

It is proven in \cite{buchstaber.panov.2} and \cite{denham.suciu}
that the natural map
$$Z(K;(\mathbb C \mathbb P^\infty, *)) \to
ET^m\times_{T^m}Z(K;(D^2,S^1))$$ is a homotopy equivalence, a point
addressed in Theorem \ref{thm:denham.suciu.BG} here. By
\cite{denham.suciu}, or Theorem \ref{thm:denham.suciu.BG} here, the
cohomology ring of the space $Z(K;(\mathbb C \mathbb P^\infty, *))$
is precisely the Stanley-Reisner ring of $K$,
$$\mathbb Z[x_1, \ldots,x_m]/I(K)$$ where $I(K)$ denotes the
ideal generated by $x_{I}= x_{i_1}\ldots x_{i_k}$ for which $I=
(i_1,\ldots, i_k)$ is not a simplex in $K$.

Thus the cohomology ring of $M^{2n}$ is precisely the ring $\mathbb
Z[x_1, \ldots,x_m]/I(K)$ modulo the two-sided ideal $I(\lambda)$
generated by the image of the map $$\rho^*: \bar{H^*}(BT^n) \to
\mathbb Z[x_1, \ldots,x_m]/I(K).$$

The Theorem follows.

\end{proof}

\section{Preparation} \label{sec:Preparation}

The proof given below for the stable decompositions of moment-angle
complexes relies on work of Welker-Ziegler-$\check{{\rm
Z}}$ivaljevi\'c  on homotopy colimits of spaces \cite{
welker.ziegler.zivaljevic}. Relevant features of that work are
recalled next for purposes of exposition.
\begin{enumerate}
\item Let $CW_*$ denote the category of $CW$-complexes and pointed, continuous
maps.

\item Let $P$ denote a finite poset (partially ordered set).
\end{enumerate}

Given a poset $P$, define a diagram $D$ of CW-complexes over $P$ to
be a functor $$D: \quad P \longrightarrow CW_*$$ so that for every
$p < p'$ in $P$ there is a map
$$\ds{d_{pp'}: \quad D(p') \longrightarrow D(p)}$$ for which
$\ds{d_{pp}}$ is the identity and (redundantly) for $p \geq p' \geq
p''$. Thus, it follows that  $$d_{pp'}\circ d_{p'p''} = d_{pp''}.$$

The colimit of $D$ is the space
$$\mbox{colim}(D) = {\coprod_{p \in P}D(p)}/\! \sim$$ for which
$\sim$ denotes the equivalence relation generated by requiring that
for each $x \in D(p')$, $x \sim d_{pp'}(x)$ for every $p < p'$.

Recall the poset associated to a simplicial complex $K$ with order
given by reverse inclusion of simplices and denoted by the symbol
$\bar{K}$ in Definition
\ref{defin:full.subcomplexes.and.realization}. Given a simplicial
complex $K$, there are two diagrams $D$ and $\hat{D}$ over the poset
$\bar{K}$ given by $D(\sigma)$ and $\hat{D}(\sigma) $ with all the
diagram maps induced by obvious inclusions. It follows that
$$Z(K;(\underline{X},\underline{A})) = \mbox{colim}(D(\sigma))$$ and
$$\widehat{Z}(K; (\underline{X},\underline{A})) = \mbox{colim}(\hat{D}(\sigma)).$$

Homotopy colimits are defined next using the order complex
$\Delta(P)$ as described in the introduction. Given a poset $P$ and
$p \in P$, define a new poset given by
$$P_{\leq p} = \{q \in P: q \leq p\}$$
The order complex $\Delta(P_{\leq p})$ is a sub-complex of
$\Delta(P)$. Also, whenever $p \leq p'$, the natural inclusions
induce an injection $$i_{p'p}: \Delta(P_{\leq p)} \quad
\hookrightarrow \quad \Delta(P_{\leq p'}).$$ For all $p_{1} <
p_{2}$, there are two natural maps $\alpha$ and $\beta$ determined
by $i_{p_{2}p_{1}}$ and $d_{p_{1}p_{2}}$ given by $$1 \times
d_{p_{1}p_{2}} = \alpha: \Delta(P_{\leq p_{1}}) \times D(p_{2})
\longrightarrow \Delta(P_{\leq p_{1}}) \times D(p_{1}),$$ and
$$i_{p_{2}p_{1}} \times 1 = \beta: \Delta(P_{\leq p_{1}}) \times
D(p_{2}) \longrightarrow \Delta(P_{\leq p_{2}}) \times D(p_{2})$$

The homotopy colimit of a diagram $D(P)$ is defined as
$$\mbox{hocolim}(D(P)) = \{\coprod_{p \in P}\Delta(P_{\leq p}) \times D(p)\}/\!\sim$$
where $\sim$ is the equivalence relation defined by identifying
every pair of points $\alpha(x, u)$ and $\beta(x, u)$.

The next theorem, a restatement of the `Projection Lemma' stated in
\cite{welker.ziegler.zivaljevic} as Lemma $3.1$ or in \cite{Segal}
permits replacement of colimits by homotopy colimits in the case of
CW-diagrams (in particular). One feature of this process is that
homotopy colimits frequently have more useful homotopy properties
than colimits in the proofs below.

\begin{thm} \label{thm:hocolim.to.colim}
Let $D(P)$ be a diagram over $P$ having the property that the map
$${\rm colim}_{q > p}\; D(q) \quad \hookrightarrow \quad D(p)$$
is a closed cofibration. Then the natural projection map
$$\pi(D):{\rm hocolim} ((D(P)) \longrightarrow {\rm colim} (D(P)) $$
induced by the projection
$$\Delta(P_{\leq p}) \times D(p) \quad \longrightarrow \quad D(p)$$
is a homotopy equivalence.
\end{thm}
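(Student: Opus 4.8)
The plan is to prove the statement by induction on the number of elements of the finite poset $P$, using a gluing lemma for homotopy pushouts together with the fact that each order complex $\Delta(P_{\leq p})$ is contractible, being a cone with apex $p$ since $p$ is the maximum of $P_{\leq p}$. When $|P| = 1$ the space $\Delta(P_{\leq p})$ is a point, so $\pi(D)$ is a homeomorphism; all the content lies in the inductive step.

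For the inductive step I would remove a \emph{minimal} element $p_0$ of $P$ and set $P' = P \setminus \{p_0\}$. The reason to choose a minimal rather than a maximal element is that for every $p \in P'$ the set $\{q : q > p\}$ is unchanged by deleting $p_0$ (nothing lies strictly below a minimal element), so $\mathrm{colim}_{q>p} D(q)$ and hence the closed-cofibration hypothesis are literally identical for $D$ and for $D|_{P'}$; the induction hypothesis therefore applies to $D|_{P'}$. With this choice both target and source split as pushouts along the same attaching map. On the colimit side one has $\mathrm{colim}(D) = \mathrm{colim}(D|_{P'}) \cup_\iota D(p_0)$, the pushout of $\mathrm{colim}_{P'} D \leftarrow \mathrm{colim}_{q>p_0} D(q) \xrightarrow{\iota} D(p_0)$, where $\iota$ is precisely the closed cofibration assumed in the statement. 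On the homotopy-colimit side, since $\Delta(P_{\leq p_0})$ is a point, the $p_0$-stratum is just $D(p_0)$, and unwinding the relations $\alpha,\beta$ for the pairs $(p_0,q)$ with $q>p_0$ shows $\mathrm{hocolim}(D) = Y \cup_\iota D(p_0)$, where $Y$ is the subspace of $\mathrm{hocolim}(D)$ assembled from the strata indexed by $P'$, glued to $D(p_0)$ along the same $\iota$.

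These two pushouts are joined by the map $\pi(D)$, which is the identity on the shared corners $\mathrm{colim}_{q>p_0} D(q)$ and $D(p_0)$ and restricts to a map $\pi(D)|_Y : Y \to \mathrm{colim}_{P'} D$ on the remaining corner. Granting that this restriction is a homotopy equivalence, the gluing lemma for homotopy pushouts---a map of pushout squares with closed-cofibration attaching maps and with the three non-terminal corner maps homotopy equivalences induces a homotopy equivalence of pushouts---yields that $\pi(D)$ is a homotopy equivalence, completing the induction.

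The main obstacle is exactly the claim that $\pi(D)|_Y : Y \to \mathrm{colim}_{P'} D$ is a homotopy equivalence. The space $Y$ is not literally $\mathrm{hocolim}(D|_{P'})$: its strata carry the order complexes $\Delta(P_{\leq p})$ computed in $P$, which for $p > p_0$ are strictly larger than the intrinsic $\Delta(P'_{\leq p})$, the enlargement being the closed star of the new vertex $p_0$. I would resolve this by collapsing that star: each $\Delta(P_{\leq p})$ deformation retracts onto $\Delta(P'_{\leq p})$ (both remain cones, hence contractible), and these retractions are compatible with the gluing relations, producing a deformation retraction of $Y$ onto $\mathrm{hocolim}(D|_{P'})$ lying over $\mathrm{colim}_{P'} D$. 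Once $Y \simeq \mathrm{hocolim}(D|_{P'})$ is established compatibly with the projections, the induction hypothesis identifies $\pi(D)|_Y$ with $\pi(D|_{P'})$ up to this equivalence and the argument closes. The contractibility of the $\Delta(P_{\leq p})$ is the structural fact underlying every step: it is what makes each single projection $\Delta(P_{\leq p}) \times D(p) \to D(p)$ an equivalence, while the closed-cofibration hypothesis is precisely what promotes these pointwise equivalences to an equivalence of the glued-up spaces.
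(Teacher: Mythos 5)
Your inductive skeleton (remove a minimal element $p_0$, split both $\mathrm{hocolim}(D)$ and $\mathrm{colim}(D)$ as pushouts along the closed cofibration $\iota:\mathrm{colim}_{q>p_0}D(q)\to D(p_0)$, then apply the gluing lemma) is sound, and you correctly isolated the crux; but the step you propose to resolve it fails. For the stratum-wise homotopies $H_p\times\mathrm{id}_{D(p)}$ to descend to $Y$, the compatibility must be strict: $H_{p_2}\circ(i\times\mathrm{id})=i\circ H_{p_1}$ for every $p_1<p_2$ in $P'$, i.e.\ $H_{p_2}$ must restrict on the subcomplex $\Delta(P_{\leq p_1})$ to $H_{p_1}$ (this is forced whenever the structure maps $d_{p_1p_2}$ are injective, which the hypotheses allow). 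Such a family need not exist. Take $P$ to be the diamond $p_0<a,\,b<c$ with $a,b$ incomparable, and take $D(c)=\ast\subset D(a)=D(b)=[0,1]$ with $D(p_0)$ the cone on $D(a)\cup_{D(c)}D(b)$, so every hypothesis of the theorem holds and all structure maps are injective. Compatibility forces $H_a(p_0,t)=H_c(p_0,t)=H_b(p_0,t)$ for all $t$, and this point lies in $\Delta(P_{\leq a})\cap\Delta(P_{\leq b})=[p_0,a]\cap[p_0,b]=\{p_0\}$, so $H_a(p_0,\cdot)$ is constant at $p_0$; this contradicts $H_a(p_0,1)\in\Delta(P'_{\leq a})=\{a\}$. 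So $Y$ admits no deformation retraction onto $\mathrm{hocolim}(D|_{P'})$ of the form you describe, and the induction does not close as written.

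The repair is to strengthen the inductive statement rather than to compare $Y$ with $\mathrm{hocolim}(D|_{P'})$: prove that for any assignment of a contractible CW-complex $C_p$ to each $p\in P$, with a subcomplex inclusion $C_{p_1}\hookrightarrow C_{p_2}$ for each $p_1<p_2$, the projection $\bigl(\coprod_{p}C_p\times D(p)\bigr)/\sim\;\longrightarrow\;\mathrm{colim}(D)$ is a homotopy equivalence under the same cofibration hypothesis. Your argument then goes through verbatim: after deleting a minimal $p_0$, the $p_0$-stratum is $C_{p_0}\times D(p_0)$, glued along $C_{p_0}\times\mathrm{colim}_{q>p_0}D(q)$; the three corner maps of the map of pushout squares are the two projections off the contractible factor $C_{p_0}$ together with $\pi|_Y$, where $Y$ is now \emph{exactly} the generalized construction over $P'$ with $C_p=\Delta(P_{\leq p})$ (contractible, being cones on $p$), so the induction hypothesis applies to it directly and the gluing lemma finishes. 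For comparison: the paper offers no proof of this statement at all; it is quoted as the Projection Lemma of Welker--Ziegler--\v{Z}ivaljevi\'c (their Lemma 3.1) and of Segal, so a self-contained argument of the kind you attempt is a genuine addition, but it needs this modification.
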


The last ingredient required here is the next result, proven in
\cite{{ziegler.zivaljevic}} as Lemma $1.8$, and stated in
\cite{welker.ziegler.zivaljevic} as Lemma $4.9$ (Wedge Lemma).

\begin{thm} \label{thm:maximal.elements}
Let $P$ be a poset with maximal element $\widehat{1}$. Let $D$ be a
$P$-diagram (a functor $ D:P\rightarrow CW_*$), so that there exists
points $c_{p'} \in D(p')$ for all $p' <\widehat{1}$ such that
$d(p,p')$ is the constant map with $d(p,p')(x)=c_{p'}$ for all $p >
p'$, then there is a homotopy equivalence
$$\underset\longrightarrow {\rm hocolim} (D,P) \longrightarrow \bigvee_{p
\in P} (\Delta (P_{< p})*D(p)).$$
\end{thm}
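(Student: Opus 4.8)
The plan is to exploit the hypotheses to show that \emph{every} structure map of the diagram is constant, and then to collapse a single explicit contractible subspace of the homotopy colimit whose quotient is visibly the wedge. First I would record the decisive simplification: since $\widehat 1$ is the unique maximal element, any comparable pair $p_1<p_2$ in $P$ satisfies $p_1<\widehat 1$, so by hypothesis every structure map $d_{p_1p_2}\colon D(p_2)\to D(p_1)$ is the constant map at $c_{p_1}$. Recall that $\Delta(P_{\le p})$ is the cone on $\Delta(P_{<p})$ with cone point the vertex $p$, so the piece of ${\rm hocolim}(D,P)$ over $p$ is $\Delta(P_{\le p})\times D(p)={\rm cone}(\Delta(P_{<p}))\times D(p)$. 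The maps $\alpha$ and $\beta$ defining the homotopy colimit then have the following effect: for $p_1<p_2$ the subspace $\Delta(P_{\le p_1})\times D(p_2)$ of the cone over $p_2$ is glued to $\Delta(P_{\le p_1})\times\{c_{p_1}\}$ in the piece over $p_1$, so that over the base $\Delta(P_{<p_2})$ of the cone the coordinate in $D(p_2)$ is crushed.

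Next I would introduce the subspace to be collapsed. Choose any point $e_{\widehat 1}\in D(\widehat 1)$, set $e_p=c_p$ for $p<\widehat 1$, and let $T\subset{\rm hocolim}(D,P)$ be the image of the section $x\mapsto(x,e_{p})$ over each $\Delta(P_{\le p})$ (here $p$ is the top vertex of the chain carrying $x$). Because the $c_p$ are exactly the images of the constant maps, these section points are compatible with the gluing, so $T$ is well defined and is precisely the homotopy colimit of the constant sub-diagram $p\mapsto\{e_p\}$; hence $T$ is homeomorphic to the realization $|\Delta(P)|$ of the order complex. Here the hypothesis enters a second time: since $\widehat 1$ is maximal, $\Delta(P)=\Delta(P_{\le\widehat 1})$ is a cone, so $T$ is contractible. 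Endowing the $D(p)$ with their CW structures makes ${\rm hocolim}(D,P)$ a CW-complex with $T$ a subcomplex, so $T\hookrightarrow{\rm hocolim}(D,P)$ is a cofibration and the quotient map ${\rm hocolim}(D,P)\to{\rm hocolim}(D,P)/T$ is a homotopy equivalence.

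It then remains to identify the quotient. Using the standard homeomorphism ${\rm cone}(A)\times B\big/(A\times B\to A)\cong A*B$, which collapses the base of the cone by the projection to $A$, the crushing described above turns the piece over $p$ into the join $\Delta(P_{<p})*D(p)$; within that join the section $T$ restricts to the contractible sub-cone $\Delta(P_{<p})*\{e_p\}$. Since the only identifications between distinct pieces run through $T$, collapsing $T$ glues all the joins at a single point and simultaneously collapses each sub-cone, giving a homeomorphism ${\rm hocolim}(D,P)/T\cong\bigvee_{p\in P}\big(\Delta(P_{<p})*D(p)\big)\big/\big(\Delta(P_{<p})*\{e_p\}\big)$. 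As each collapsed sub-cone is contractible, every summand is homotopy equivalent to $\Delta(P_{<p})*D(p)$, and composing yields the asserted homotopy equivalence ${\rm hocolim}(D,P)\to\bigvee_{p\in P}\Delta(P_{<p})*D(p)$.

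I expect the main obstacle to be the final identification of the quotient. One must verify, via the CW/simplicial structure, that the constant-map gluings crush \emph{exactly} the cone bases, so that each piece becomes a clean join, and that no further identifications among distinct joins occur beyond the common point $[T]$; the contractibility of $T$, forced by the maximal element $\widehat 1$, is precisely what guarantees that this collapse is a homotopy equivalence rather than merely a quotient.
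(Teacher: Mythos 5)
Your argument is correct, but there is no proof in the paper to compare it against: the paper does not prove Theorem \ref{thm:maximal.elements} at all. It is imported as a black box --- the ``Wedge Lemma,'' proven in \cite{ziegler.zivaljevic} as Lemma 1.8 and stated in \cite{welker.ziegler.zivaljevic} as Lemma 4.9 --- and then used in the proofs of Theorems \ref{T:1.2} and \ref{T2.19}. What you have written is essentially a reconstruction of the standard argument from those sources: since every comparable pair lies below $\widehat{1}$, all structure maps are constant; hence the homotopy colimit is exactly the union of the joins $\Delta(P_{<p})*D(p)$, each obtained from $\Delta(P_{\leq p})\times D(p)$ by crushing $\{x\}\times D(p)$ for $x\in\Delta(P_{<p})$, and any two of these joins meet only inside the section $T\cong |\Delta(P)|$ swept out by the points $c_p$ together with an arbitrary $e_{\widehat{1}}\in D(\widehat{1})$; since $\Delta(P)=\Delta(P_{\leq \widehat{1}})$ is a cone, $T$ is contractible, and collapsing it yields the wedge of the joins modulo their contractible sub-cones $\Delta(P_{<p})*\{e_p\}$. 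I checked the combinatorics of the generated equivalence relation (canonical representatives are points $(x,u)$ with $p=\max\operatorname{supp}(x)$, and the only non-singleton classes are those of the points $(x,c_p)$), and your description of the gluing is accurate, so the step you flag as the main obstacle does go through. Your route buys a self-contained proof at the cost of a page of point-set bookkeeping that the paper chose to outsource.

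Two details deserve to be stated rather than glossed. First, you silently read ``maximal element'' as ``greatest element'': you use both that $p_1<p_2$ implies $p_1<\widehat{1}$ and that $\Delta(P)$ is a cone on $\Delta(P_{<\widehat{1}})$. This is the intended meaning (the paper's posets are finite, where a unique maximal element is a greatest element, and the application is to $\overline{K}$, whose greatest element is $\emptyset$), but it is a hypothesis you are invoking. Second, the claim that $T$ is a subcomplex of ${\rm hocolim}(D,P)$ presumes that $c_p$ and $e_{\widehat{1}}$ are vertices of the $D(p)$, which is not given; this is harmless, since one may refine each CW structure so that the relevant point (interior to a unique cell) becomes a $0$-cell, or simply note that any point of a CW complex is nondegenerate, so that $T\hookrightarrow {\rm hocolim}(D,P)$ and each $\Delta(P_{<p})*\{e_p\}\hookrightarrow \Delta(P_{<p})*D(p)$ are cofibrations, which is all that the two collapses require. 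Neither point is a gap; both are routine repairs.
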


An additional result is developed next. Let $\Sigma:CW_*\to CW_*$ be
the suspension functor. By the definition of $\hbox{colim}$, if
$D:P\to CW_*$ is a diagram, there is a natural inclusion
$$h(p):D(p)\to {\rm colim}(D(P))$$ which induces a map
$$\Sigma(h(p)):\Sigma(D(p)) \to \Sigma(\rm colim (D(P)))$$ and thus a
map: $$h:{\rm colim}((\Sigma\circ D)(P))\to \Sigma({\rm
colim}(D(P))).$$
\begin{thm}\label{thm:3.3} Let $D:P\to CW_*$ be a diagram of finite CW-complexes, then the map
$$h:{\rm colim}((\Sigma\circ D)(P))\to \Sigma({\rm colim}(D(P)))$$
is a homotopy equivalence.
\end{thm}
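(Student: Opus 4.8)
The plan is to recognize $h$ as the canonical comparison map associated to the functor $\Sigma$ and to prove it is a homotopy equivalence (in fact a homeomorphism) by showing that reduced suspension commutes with the colimit over $P$. The quickest route is formal. Reduced suspension is $\Sigma(-) = (-)\wedge S^1$, and in the category of based, compactly generated weak Hausdorff spaces this functor is left adjoint to the based mapping space $\mathrm{Map}_*(S^1,-)$. A left adjoint preserves all colimits, so the cocone $\{\Sigma(h(p))\}_{p\in P}$ is again colimiting, and the induced map $h$ is an isomorphism. Since the hypotheses restrict $D$ to finite CW-complexes with cellular structure maps, every space and colimit in sight already lies in this convenient category and the strict colimits agree with those computed there; hence $h$ is a homeomorphism, and a fortiori a homotopy equivalence.

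For a more self-contained argument that stays inside the pushout/homotopy-colimit framework already set up, I would instead induct on the cardinality of $P$. When $|P|=1$ the map $h$ is the identity. For the inductive step I would choose a minimal element $p_0\in P$, set $P'=P\setminus\{p_0\}$ and $U=\mathrm{colim}_{p>p_0}D$. Because the structure maps $D(p)\to D(p_0)$ for $p>p_0$ are compatible with the diagram maps, they assemble into a single map $U\to D(p_0)$, and one checks directly that $\mathrm{colim}_P D$ is the pushout of $\mathrm{colim}_{P'}D \leftarrow U \to D(p_0)$. Reduced suspension carries a pushout of based spaces to the pushout of the suspended diagram (pushouts being a special case of colimits, and the quotient construction of $\Sigma$ making this transparent), so $\Sigma(\mathrm{colim}_P D)$ is the pushout of $\Sigma\,\mathrm{colim}_{P'}D \leftarrow \Sigma U \to \Sigma D(p_0)$, while $\mathrm{colim}_P(\Sigma D)$ is the analogous pushout for the diagram $\Sigma\circ D$ with the same $p_0$. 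The comparison map $h$ is compatible with these two presentations and restricts on the three corners to the comparison maps for $P'$, for $\{p>p_0\}$, and for the one-point poset $\{p_0\}$, each of which is a homotopy equivalence by the inductive hypothesis. The gluing lemma for pushouts then yields that $h$ is a homotopy equivalence.

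The delicate point in this second approach is that the pushouts above must behave as homotopy pushouts, both so that the gluing lemma applies and so that $\Sigma$ commutes with them up to homotopy; this requires the attaching maps $U\to D(p_0)$ — equivalently the inclusions $\mathrm{colim}_{q>p}D(q)\hookrightarrow D(p)$ — to be closed cofibrations. For diagrams of finite CW-complexes with cellular structure maps this can be arranged exactly as in the hypothesis of Theorem \ref{thm:hocolim.to.colim}, after replacing maps by mapping cylinders where necessary; verifying this cofibration condition and its compatibility with suspension is where the genuine work lies. The first, adjoint-functor approach avoids this bookkeeping entirely, at the single cost of committing to a convenient point-set category, which is the only real subtlety there.
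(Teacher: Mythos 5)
Your first argument is correct and complete, and it takes a genuinely different route from the paper. The paper argues homologically: it asserts that homology commutes with finite colimits (invoking the Mayer--Vietoris spectral sequence), deduces via the suspension isomorphism that $h$ induces an isomorphism on homology, and then applies J.~H.~C.~Whitehead's theorem, using that both $\mathrm{colim}((\Sigma\circ D)(P))$ and $\Sigma(\mathrm{colim}(D(P)))$ are simply connected --- this is where the standing convention that $CW_*$ consists of \emph{connected} based complexes, and the finiteness hypothesis, actually enter. Your adjunction argument is sharper on every count: it yields that $h$ is a homeomorphism rather than merely a homotopy equivalence, it needs neither finiteness nor connectivity, and it sidesteps the paper's most delicate step, since the assertion that homology commutes with finite colimits is false in general even for cofibrant diagrams (for the pushout $D^2 \leftarrow S^1 \to D^2$ the colimit is $S^2$, and $H_2$ of the colimit is not the colimit of the $H_2$'s), so the paper's commutative diagram really requires a comparison of spectral sequences --- or an argument like yours --- to be fully justified. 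One refinement: you need not even commit to compactly generated weak Hausdorff spaces. Since $S^1$ is locally compact Hausdorff, $-\times S^1$ preserves quotient maps, so $\Sigma(\mathrm{colim}(D(P)))$ and $\mathrm{colim}((\Sigma\circ D)(P))$ are both quotients of $\coprod_{p\in P}\bigl(D(p)\times S^1\bigr)$ by the same equivalence relation (the collapses defining the smash products together with the diagram identifications $(x,s)\sim(d_{pp'}(x),s)$), and $h$ is the induced bijection, hence a homeomorphism in the plain category of based spaces. Your second, inductive argument can simply be discarded: as you yourself note, it requires cofibration hypotheses that the theorem does not supply, and the first argument stands on its own.
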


\begin{proof} Since homology commutes with finite colimits by a check of the Mayer-Vietoris
spectral sequence, there is a commutative diagram where the vertical
maps are isomorphisms:

\[\begin{CD}
H_*({\rm colim}((\Sigma \circ D)(P)))&@>h_*>>& H_*(\Sigma(\hbox{colim}(D(P)))\\
@V c VV&&   @VV 1 V\\
\hbox{colim}(H_*(\Sigma \circ D(p)))&@>>> &H_*(\Sigma(\hbox{colim}(D(P))) \\
@V s_* VV&&@VV s_* V\\
 \hbox{colim}(H_{*-1}(D(p))) & @>\cong>> & H_{*-1}(\hbox{colim}(D)).
\end{CD} \]

Here $s_*$ is the inverse of the suspension isomorphism in homology,
and $c$ is the commutation isomorphism. Thus $h_*$ is an isomorphism
in homology. Since both $\hbox{colim}((\Sigma\circ D)(P))$ and
$\Sigma(\hbox{colim}((D(P)))$ are both $1$-connected,
J.~H.~C.~Whitehead's theorem gives that $h$ is a homotopy
equivalence.

\end{proof}

The following `Homotopy Lemma' given in \cite{porter}, \cite{vogt}
and \cite{welker.ziegler.zivaljevic} is useful in what follows
below.
\begin{thm} \label{T:3.4} Let $D$ and $E$ be two diagrams over $P$ and with
values in $CW_*$. Then if $f$ is a map of diagrams over $P$ such
that for every $p, f_p:D(p)\to E(p)$ is a homotopy equivalence, then
$f$ induces a homotopy equivalence:
$$\overline{f}: {\rm hocolim} (D(P)) \to {\rm hocolim} (E(P))$$
\end{thm}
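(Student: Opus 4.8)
The plan is to prove Theorem \ref{T:3.4} by induction on the cardinality $N=|P|$ of the finite poset $P$, presenting each homotopy colimit as an iterated pushout of contractible ``cone pieces'' and invoking the standard gluing lemma for homotopy equivalences along cofibrations at every stage. First I would fix a linear extension $p_1,p_2,\ldots,p_N$ of $P$, so that $p_i<p_j$ in $P$ forces $i<j$. Writing $P_k=\{p_1,\ldots,p_k\}$, each $P_k$ is a lower set and $p_k$ is a maximal element of $P_k$, so that $(P_k)_{\leq p_k}=P_{\leq p_k}$ and the restricted diagram $D|_{P_k}$ has a homotopy colimit built from the same cone pieces $\Delta(P_{\leq p_j})\times D(p_j)$ for $j\leq k$.

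Since $p_k$ is the top element of $P_{\leq p_k}$, the order complex is the cone $\Delta(P_{\leq p_k})=p_k*\Delta(P_{<p_k})$ on its base $\Delta(P_{<p_k})$, the subcomplex of chains not using the vertex $p_k$. Unwinding the equivalence relation defining ${\rm hocolim}$, the passage from ${\rm hocolim}(D|_{P_{k-1}})$ to ${\rm hocolim}(D|_{P_k})$ is obtained by attaching this single cone piece along its base, yielding a pushout square
\[
\begin{CD}
\Delta(P_{<p_k})\times D(p_k) @>>> {\rm hocolim}(D|_{P_{k-1}}) \\
@V{\iota_k \times 1}VV @VVV \\
\Delta(P_{\leq p_k})\times D(p_k) @>>> {\rm hocolim}(D|_{P_k})
\end{CD}
\]
in which $\iota_k\colon\Delta(P_{<p_k})\hookrightarrow\Delta(P_{\leq p_k})$ is the inclusion of the base of the cone, so that $\iota_k\times 1$ is a closed cofibration of CW-complexes. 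The top map sends $(\iota_k(u),x)$ to the class $[u,d_{p_jp_k}(x)]$ whenever $u$ lies in the face $\Delta(P_{\leq p_j})$ with $p_j<p_k$, and the cocycle identity $d_{pp'}\circ d_{p'p''}=d_{pp''}$ guarantees that this is well defined.

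With this presentation in hand the induction is routine. For the base case $N=1$ one has ${\rm hocolim}(D|_{P_1})=\Delta(\{p_1\})\times D(p_1)=D(p_1)$, and $f_{p_1}$ is a homotopy equivalence by hypothesis. For the inductive step, the map of diagrams $f$ induces a map from the pushout square for $D$ to the corresponding pushout square for $E$, with connecting maps $1\times f_{p_k}$ on the two cone pieces and $\overline{f}_{k-1}$ on ${\rm hocolim}(D|_{P_{k-1}})$. The first two are homotopy equivalences because $f_{p_k}$ is one and multiplication by a fixed CW-complex preserves homotopy equivalences, and the third is a homotopy equivalence by the inductive hypothesis. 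Since the left-hand legs $\iota_k\times 1$ are cofibrations, the gluing lemma for homotopy pushouts shows that the induced map $\overline{f}_k\colon{\rm hocolim}(D|_{P_k})\to{\rm hocolim}(E|_{P_k})$ is a homotopy equivalence; taking $k=N$ delivers the conclusion for $\overline{f}$.

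The hard part will be establishing the pushout presentation itself, namely verifying that re-attaching the cone piece $\Delta(P_{\leq p_k})\times D(p_k)$ along its base $\Delta(P_{<p_k})\times D(p_k)$ faithfully rebuilds ${\rm hocolim}(D|_{P_k})$ from ${\rm hocolim}(D|_{P_{k-1}})$ with no stray identifications in the open cone. This is a bookkeeping argument with the defining relation of ${\rm hocolim}$, resting on the cone structure of $\Delta(P_{\leq p_k})$ and the cocycle identity above. Once it is in place, the cofibration property and the behaviour of products with a fixed CW-complex are standard, and the gluing lemma supplies the rest.
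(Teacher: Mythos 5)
Your proof is correct, but it is worth noting that the paper does not actually prove this statement at all: Theorem \ref{T:3.4} is the ``Homotopy Lemma'' quoted verbatim from the references \cite{porter}, \cite{vogt} and \cite{welker.ziegler.zivaljevic}, and the paper uses it as a black box. What you have written is the standard self-contained argument behind that lemma for finite posets: choose a linear extension so that each $P_k=\{p_1,\ldots,p_k\}$ is a lower set (whence $(P_k)_{\leq p_j}=P_{\leq p_j}$), observe that $\Delta(P_{\leq p_k})$ is the cone on $\Delta(P_{<p_k})$ because $p_k$ is maximal, present ${\rm hocolim}(D|_{P_k})$ as the pushout of the cone piece along its base over ${\rm hocolim}(D|_{P_{k-1}})$, and induct using the gluing lemma for homotopy equivalences along closed cofibrations. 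All the ingredients you flag do check out: the union $\bigcup_{p_j<p_k}\Delta(P_{\leq p_j})$ is exactly $\Delta(P_{<p_k})$; the attaching map is well defined and continuous precisely because of the cocycle identity $d_{pp'}\circ d_{p'p''}=d_{pp''}$ (two descriptions of the image of $(u,x)$ both reduce to $(u,d_{qp_k}(x))$ for $q$ the top of the chain supporting $u$); chains of identifications passing through the new cone piece create no stray identifications among old points, again by the cocycle identity, so the quotient really is the stated pushout; and $\iota_k\times 1$ is a closed cofibration since products with a fixed space preserve NDR pairs. What the two routes buy: the paper's citation keeps its Preparation section short and emphasizes that this is classical, while your argument makes the result self-contained, works for arbitrary (not necessarily CW) values if one only assumes the gluing lemma for Hurewicz cofibrations, and exhibits the filtration of ${\rm hocolim}$ by lower sets, which is itself a useful structural fact.
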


\section{Proof of Theorem \ref{thm:decompositions.for.general.moment.angle.complexes} }
Recall the functor $D:K \to \hbox{CW}_*$ of Definition
\ref{defin:moment.angle.complex} as follows: For every $\sigma$ in
$K$, let
$$
D(\sigma) =\prod^m_{i=1}Y_i,\quad {\rm where}\quad
Y_i=\left\{\begin{array}{lcl}
X_i &{\rm if} & i\in \sigma\\
A_i &{\rm if} & i\in [m]-\sigma.
\end{array}\right.$$ Then $Z(K;(\underline{X},\underline{A}))=\bigcup_{\sigma \in K}
D(\sigma)= \mbox{colim } D(\sigma).$

The analogous functor with values in smash products of spaces is
given in Definition \ref{defin:smash.product.moment.angle.complex}
by $\widehat{D}:K \to \hbox{CW}_*$ where
$$
\widehat{D}(\sigma) =Y_1 \wedge Y_2 \wedge \ldots \wedge Y_m \quad
{\rm where}\quad Y_i=\left\{\begin{array}{lcl}
X_i &{\rm if} & i\in \sigma\\
A_i &{\rm if} & i\in [m]-\sigma.
\end{array}\right.
$$

Recall from Definition \ref{defin:smash.products} that $I$ denotes
an ordered sequence $I = (i_1, \ldots, i_k)$ which satisfies $1 \leq
i_1 <\ldots < i_k \leq m$. Specialize $K$ to $K_I$ the full
subcomplex of $K$ given by $$K_I = \{ \sigma \cap I| \ \sigma \in
K\}.$$ Thus $\widehat{D}:K_I \to \hbox{CW}_*$ satisfies
$$
\widehat{D}(\sigma \cap I) =Y_{i_1} \wedge Y_{i_2} \wedge \ldots
\wedge Y_{i_k} \quad {\rm where}\quad
Y_{i_j}=\left\{\begin{array}{lcl}
X_{i_j} &{\rm if} & {i_j}\in \sigma\cap I\\
A_{i_j} &{\rm if} & {i_j}\in I-\sigma\cap I.
\end{array}\right.
$$ Theorem \ref{thm:T2.1} states that there
is a natural pointed homotopy equivalence $$H : \Sigma D(\sigma) \to
\Sigma(\bigvee_{I \subset [m]} \widehat{D}(\sigma \cap I)).$$

\begin{defin}\label{defin:D.Hat}
The diagram $$E:K\to CW_*$$ is defined by the values on objects
$$E(\sigma) = \bigvee_{I \subset [m]} \widehat{D_I}(\sigma \cap I)$$
together with naturality.
\end{defin}

Observe that the map $H$ of Theorem \ref{thm:T2.1} gives a map of
diagrams $H: \Sigma D(\cdot) \to \Sigma E(\cdot)$ which induces a
homotopy equivalence $$H: \Sigma D(\sigma) \to \Sigma E(\sigma)$$
for every $\sigma$. It follows from the `Homotopy Lemma' in
\cite{welker.ziegler.zivaljevic} stated here as Theorem \ref{T:3.4}
that the induced map $$H: {\rm hocolim}(\Sigma D(\cdot)) \to {\rm
hocolim}(\Sigma E(\cdot))$$ is a homotopy equivalence.

The `Projection Lemma' in \cite{welker.ziegler.zivaljevic} stated
here as Theorem \ref{thm:hocolim.to.colim} gives a homotopy
equivalence $$H: {\rm colim}(\Sigma D(\cdot)) \to {\rm colim}(\Sigma
E(\cdot)).$$

Notice that Theorem
\ref{thm:decompositions.for.general.moment.angle.complexes} follows
directly from the features (i) suspension $\Sigma(\cdot)$ commutes
with finite colimits by Theorem \ref{thm:3.3}, (ii) ${\rm
colim}(D(\sigma)) =Z(K;(\underline{X},\underline{A}))$ and (iii)
${\rm colim}(E(\sigma)) = \bigvee_{I \subset[m]}
\widehat{Z}(K;(\underline{X_I},\underline{A_I}))$. Thus there is a
homotopy equivalence $$H:
\Sigma(Z(K;(\underline{X},\underline{A})))\to \Sigma(\bigvee_{I \leq
[m]} \widehat{Z}(K_I;(\underline{X_I},\underline{A_I})))$$ which is
natural for morphisms of pairs $(\underline{X},\underline{A})$ and
embeddings in $K$.

\section{Proof of Theorem \ref{T:1.2} }

The hypotheses of Theorem \ref{T:1.2} are that the inclusion maps
$A_i\subset X_i$ are null-homotopic for all $i$.

\begin{defin}\label{defin:D.sigma}
Consider the diagram $$\widehat{D}:\overline{K}\to CW_*$$ where the
underlying set of $\overline{K}$ is $K$, but ordered with respect to
reverse inclusion where
$$
\widehat{D}(\sigma)=\bigwedge_{i=m}Y_i\quad{\rm where}\quad
Y_i=\left\{\begin{array}{lll}
X_i&{\rm if}&i\in \sigma\\
A_i&{\rm if}& i\in [m]-\sigma\end{array}\right.
$$ with $\widehat{d}_{\sigma,\tau}:\widehat{D}(\tau)\to\widehat{D}(\sigma)$
are given by the natural inclusions when $\sigma<\tau$ (i.e.
$\tau\subset \sigma$).

\end{defin}

The inclusion maps $i_k:A_k\to X_k$ are all null-homotopic. Consider
the diagram $$A_k\stackrel{i_k}{\to}X_k\stackrel{id}{\to}X_k.$$ By
hypothesis, $i_k$ is null-homotopic, so there is a null-homotopy
$F_k:A_k\times I\to X_k$ with $F_k(a,0)=i_k(a)$ and
$F_k(a,1)=f_k(a)=x_k$ for all $a\in A_k$. By the homotopy extension
theorem, there exists $G_k:X_k\times I\to X_k$ with $G_k(x,0)=x,\,\,
G_k(x,1)=g_k(x)$ such that $g_k(a_k) =i_k(a_k)=x_k$ for all $a_k \in
A_k$. Thus there is a commutative diagram

\[\begin{CD}
A_k &@>id>>& A_k\\
@V{i_k}VV &&@V\overline{i}_kVV\\
X_k &@>g_k>> &X_k
\end{CD}\] where $\overline{i}_k$ is the constant map to $x_k$ the base point
of $X_k$.

\begin{defin}\label{defin:E.sigma.i.k.null}
For every $\sigma\in K$, define
$$\widehat{E}(\sigma)
=\left\{\begin{array}{lcl}
\underline{*} &{\rm if} & \sigma \neq \emptyset\\
\widehat{A}^{[m]} &{\rm if} & \sigma = \emptyset.
\end{array}\right.
$$
\end{defin}

Next, define maps $\alpha(\sigma)$ as follows.
\begin{defin}\label{defin:E.sigma}
For every $\sigma\in K$, define
$$\alpha(\sigma):\widehat{D}(\sigma)\to \widehat{E}(\sigma)$$ by
$\alpha(\sigma)=\alpha_1(\sigma)\wedge\ldots\wedge\alpha_m(\sigma)$
where
$$\alpha_k(\sigma)=\left\{\begin{array}{lll}
g_k:X_k\to X_k&{\rm if}&k\in\sigma\\
id:A_k\to A_k&{\rm if}&k\in[m]-\sigma
\end{array}\right.
$$
\end{defin}

Since the $g_k$ are homotopy equivalences, so is $\alpha(\sigma)$
for all $\sigma\in K$. Furthermore, if $ \sigma<\tau$ (i.e.
$\tau\subset\sigma$) the following diagram commutes:

\[\begin{CD}
\widehat{D}(\tau)&@>\alpha(\tau)>>&\widehat{E}(\tau)\\
@V\widehat{d}_{\sigma ,\tau}VV && @VV\widehat{e}_{\sigma,\tau}V\\
\widehat{D}(\sigma)& @>\alpha(\sigma)>> &\widehat{E}(\sigma)
\end{CD}\]

Hence the maps $\alpha(\sigma)$ give a map of diagrams
$$\alpha:\widehat{D}(\overline{K})\to \widehat{E}(\overline{K})$$
which by Theorem \ref{T:3.4} induces a homotopy equivalence
$${\rm hocolim }(\alpha): {\rm hocolim }(\widehat{D}(\overline{K}))\to {\rm hocolim}(\widehat{E}(K)).$$

By naturality, there is a commutative diagram
\[\begin{CD}
{\rm hocolim }(\widehat{D}(\overline{K}))  @>{{\rm hocolim }(\alpha})>> {\rm hocolim}(\widehat{E}(K))\\
@V{\pi(\widehat{D})}VV  @VV{\pi(\widehat{E})}V\\
{\rm colim}(\widehat{D}(K))  @>{{\rm colim}(\alpha)} >> {\rm
colim}(\widehat{E}(K))
\end{CD}\] where $\pi(\widehat{D})$ and $\pi(\widehat{E})$ are the natural projection map.
By Theorem \ref{thm:hocolim.to.colim}, the maps
$$\pi(\widehat{D}):{\rm hocolim }(\widehat{D}(\overline{K})) \to {\rm
colim }(\widehat{D}(\overline{K})), $$ and $$\pi(\widehat{E}):{\rm
hocolim }(\widehat{E}(\overline{K})) \to {\rm colim
}(\widehat{E}(\overline{K}))$$ are homotopy equivalences. Thus, the
induced map $${\rm colim}(\alpha):{\rm
colim}(\widehat{D}(\overline{K}))\to {\rm
colim}(\widehat{E}(\overline{K}))$$ is a homotopy equivalence.

By Theorem \ref{thm:maximal.elements}, there is a homotopy
equivalance $$\hbox{colim}(\widehat{E}(\overline{K}))\to
\bigvee_{p\in P}|\Delta(P_{<p})|*D(p).$$ By definition, there is a
homeomorphism $$\hbox{colim}(\widehat{D}(\overline{K}))\to
\widehat{Z}(K;(\underline{X},\underline{A})).$$

Theorem \ref{T:1.2} follows.

\section{Proof of Theorem \ref{T2.19}}\label{sec:thm:contractible.X.with.zero.skeleton}

Let $(X_i,A_i,x_i)$ denote a triple of $CW$-complexes with
base-point $x_i$ in $A_i$ for $1 \leq i \leq m$ such that all of the
$X_i$ and $A_i$ are closed CW-complexes with all $X_i$ contractible.
Consider the diagram defined next.
\begin{defin}\label{defin:X.i.contractible}
Define $$\hat{D}_{}: \overline{K} \longrightarrow CW_*$$ by
$$\widehat{D}(\omega)
 = Y_1 \wedge \ldots \wedge Y_m,\quad {\rm with}\quad
Y_i=\left\{\begin{array}{lcl}
X_i &{\rm if} & i\in \omega\\
A_i &{\rm if} & i\notin \omega.
\end{array}\right.$$
\end{defin}

Observe that there is an induced homotopy equivalence
$$\mbox{hocolim}(\hat{D}_{}) \to \mbox{colim}(\hat{D}_{}) =
\widehat{Z}(K_{}; (\underline X,\underline{A}))$$ by Theorem
\ref{thm:hocolim.to.colim}.

Define a new diagram as follows.

\begin{defin}\label{defin:new.X.i.contractible}
Define a diagram $$\hat{F}_{}: \overline{K} \longrightarrow CW_*$$
by
$$\widehat{F}(\omega)
 =\left\{\begin{array}{lcl}
 \widehat{A}^{[m]} & \text{ if \; $w =
\emptyset$}\\
* & \text{if \;\, $w \neq \emptyset$}
\end{array}\right.$$for which all maps $$d_{\omega\omega'}:
\widehat{F}(\omega') \longrightarrow \widehat{F}(\omega)$$ are
constant.
\end{defin}

Observe that there is a map of diagrams
$$ \Phi_{\widehat{D}\widehat{F}} : \quad \hat{D}_{}  \longrightarrow \hat{F}_{}$$ obtained
by requiring that $\widehat{D}(\emptyset)$ is sent to
$\widehat{F}(\emptyset)$ by the identity, and where every other
space $\widehat{D}(\omega)$ is sent to the base-point by the
constant map. Since each $X_i$ is contractible,
$\Phi_{\widehat{D}\widehat{F}}$ is a homotopy equivalence on each
object in the diagram. By Theorem \ref{T:3.4} or the Homotopy Lemma
of \cite{welker.ziegler.zivaljevic} ( stated there as Lemma $4.6$ )
there is a homotopy equivalence
$$\mbox{hocolim}(\hat{D}_{}) \to \mbox{hocolim}(\hat{F}_{}).$$

Since there is a homotopy equivalence
$$\mbox{hocolim}(\hat{D}) \to \mbox{colim}(\hat{D}) = \widehat{Z}(K; (\underline X,\underline{A})),$$
it follows that there are homotopy equivalences $$ \widehat{Z}(K;
(\underline X,\underline{A})) \to \mbox{hocolim}(\hat{D}) \to
\mbox{hocolim}(\hat{F}).$$

Notice that ${\ds \hat{F}_{}(\omega) = *}$, unless $\omega =
\emptyset$. In the case $\omega = \emptyset$, $$
\Delta((\overline{K})_{< \emptyset}) = |K|,$$ or, more precisely,
the barycentric subdivision of $|K|$ with
$$\hat{F}_{}(\emptyset) = \widehat{A}^{[m]}.$$ Thus, there are homotopy equivalences
$$\widehat{Z}(K; (\underline{X},\underline{A})) \to  |K|* \widehat{A}^{[m]}.$$ Theorem \ref{T2.19} follows.

\section{Proof of Theorem \ref{thm:null.homotopy.for.A.to.X}}

The conclusion of Theorem \ref{thm:null.homotopy.for.A.to.X} is that
there is a homotopy equivalence $$\Sigma Z(K;( {\underline
X},\underline{A})) \to \Sigma( \bigvee_{I \notin K} |K_{I}| *
\widehat{A}_I).$$ Observe that this conclusion follows from Theorems
\ref{thm:decompositions.for.general.moment.angle.complexes} and
\ref{T2.19} together with the fact that if $I \leq [m]$ corresponds
to a simplex in $K$, then $|K_I|$ is contractible. That is, there
are homotopy equivalences
$$\Sigma(Z(K;(\underline{X},\underline{A})))\to \Sigma(\bigvee_{I
\leq [m]} \widehat{Z}(K_I;(\underline{X}_I,\underline{A}_I)) \to
\Sigma(\bigvee_{I \leq [m]}|K_{I}| *\widehat{A}^{I})$$ with $
\Sigma(\bigvee_{I \leq [m]}|K_{I}| *\widehat{A}^{I})
=\Sigma(\bigvee_{I \notin K}|K_{I}| *\widehat{A}^{I})$.

\section{Proof of Theorem \ref{thm:cohomology.for.contractible.A}}\label{sec:thm:cohomology.for.contractible.A}
Consider the natural inclusion $$Z(K; (\underline{X},\underline{A}))
\to X_1 \times \ldots \times X_m = X^{[m]}$$ together with $I =
(i_1, \ldots, i_t)$ for $ 1 \leq i_1 <\ldots < i_t \leq m$. Recall
the notational convention of Definition \ref{defin:smash.products}
that $\widehat{X}^{I} = X_{i_1}\wedge X_{i_2}\wedge \ldots \wedge
X_{i_t}$. Since all $A_i$ are contractible by assumption, apply
Theorems \ref{thm:decompositions.for.general.moment.angle.complexes}
and \ref{thm:contractible.A} to obtain a homotopy equivalence
$$\Sigma( Z(K; (\underline{X},\underline{A}))) \to  \Sigma( \bigvee_{I \in K} \widehat{X}^{I}).$$
By Theorem \ref{thm:T2.1}, there is a homotopy equivalence
$$\Sigma(X_1 \times \ldots \times X_m) \to \Sigma( \bigvee_{I \in [m]} \widehat{X}^{I}).$$

By naturality, the map $$\Sigma(Z(K; (\underline{X},\underline{A})))
\to \Sigma(X_1 \times \ldots \times X_m)$$ is split with cofibre
$$\Sigma( \bigvee_{I \notin K} \widehat{X}^{I}).$$ Hence there is a split cofibration
$$\Sigma( \bigvee_{I \in K}\widehat{X}^{I}) \to \Sigma( \bigvee_{I \in
[m]} \widehat{X}^{I}) \to \Sigma( \bigvee_{I \notin K}
\widehat{X}^{I}).$$

Next assume that coefficients are taken in a ring $R$ for which
either
\begin{enumerate}
\item $R$ is a field,
\item the cohomology of the $\widehat{X}^{I}$ with coefficients in $R$ satisfies the strong form
of the K\"unneth Theorem (with one case given by $R = \mathbb Z$ for
spaces $X_i$ of finite type with torsion free integral cohomology),
or
\item cohomology is taken in any reduced theory $E^*(\widehat{X}^{I})$ which satisfies the strong form
of the K\"unneth Theorem.
\end{enumerate}

Thus the reduced cohomology $\bar H^*(X^{[m]};R)$ is isomorphic to
the direct sum $$ \oplus_{I \in [m]} \bar{H}^*(X_{i_1};R) \otimes
\cdots \otimes \bar{H}^*(X_{i_t};R).$$ The natural inclusion map
$\Sigma(Z(K; (\underline{X},\underline{A}))) \to \Sigma(X_1 \times
\ldots \times X_m)$ induces a map $$\bar{H}^*(X^{[m]};R) \to \bar
{H}^*(Z(K; (\underline{X},\underline{A}));R)$$ which corresponds to
the projection map
$$ \oplus_{I \in [m]} \bar{H}^*(X_{i_1};R) \otimes \cdots \otimes
\bar{H}^*(X_{i_t};R) \to \oplus_{I \in K} \bar{H}^*(X_{i_1};R)
\otimes  \cdots \otimes \bar{H}^*(X_{i_t};R)
$$ with kernel exactly
$$\oplus_{I \notin K} \bar{H}^*(X_{i_1};R) \otimes  \cdots
\otimes \bar{H}^*(X_{i_t};R).$$ The kernel is exactly the
Stanley-Reisner ideal $I(K)$ by inspection and Theorem
\ref{thm:cohomology.for.contractible.A} follows.

\section{Proof of Theorem \ref{thm:simplicial.moment.angle.15.march.2007}}
\label{sec:simplicial.moment.angle}

The first preparatory proof in this section is that of Lemma
\ref{lem:simplicial.moment.angle} which follows from the fact that
$Z(K;(\underline{X},\underline{A}))$ is natural for morphisms with
respect to morphisms  of pairs,  $(\underline{X},\underline{A}) \to
(\underline{X}^{\prime},\underline{A}^{\prime})$.

\begin{proof}
The first part of the lemma is to check that
\begin{enumerate}
  \item $d_i: Z(K; (\underline{X}_*,\underline{A}_*) \to Z(K; (\underline{X},\underline{A})_*)_{n-1}$ and
  \item $s_j: Z(K; (\underline{X}_*,\underline{A}_*) \to Z(K; (\underline{X},\underline{A})_*)_{n+1}.$
\end{enumerate}

So it suffices to check that if $(x_1,x_2, \ldots, x_m)$ is in the
subset $$Z(K;(\underline{X}_*,\underline{A}_*))_n \subset (X_n(1)
\times \ldots \times X_n(m),$$ then

\begin{enumerate}
\item $d_i(x_1,x_2, \ldots, ,x_m) = (d_i(x_1),d_i(x_2), \ldots, d_i(x_m)) $ is in
$Z(K;(\underline{X}_*,\underline{A}_*))_{n-1}$, and
\item $s_j(x_1,x_2, \ldots, x_m) = (s_j(x_1),s_j(x_2), \ldots, s_j(x_m))$ is in $Z(K;(\underline{X}_*,\underline{A}_*))_{n+1}$
\end{enumerate} Notice that a point $(x_1,x_2, \ldots, ,x_m)$ is in $Z(K;
(\underline{X}_*,\underline{A}_*)) \subseteq X_n(1) \times \ldots
\times X_n(m)$ provided $x_t \in A_n(t)$ for $\sigma \in K$ and
$t\notin \sigma$. But then $d_i(x_t) \in A_{n-1}(t)$ for $ t \notin
\sigma $ as $A_*(t)$ is a simplicial sub-complex of $X_*(t)$ by
assumption. Similarly, notice that $s_j(x_t)$ is in $A_{n+1}(t)$ if
and only if $x_t = d_js_j(x_t)$ is in $A_{n}(t)$. The lemma follows.
\end{proof}

\

The next proof is that of Theorem
\ref{thm:simplicial.moment.angle.15.march.2007}.
\begin{proof}
Let $|u_i,x_i|$ denote the equivalence class of a point in the
geometric realization $|X_*(i)|$. The homeomorphism $$\eta: |X_*(1)|
\times \ldots \times |X_*(m)| \to |X_*(1) \times \ldots \times
X_*(m)|$$ is induced by $\eta(|u_1,x_1|, \ldots,|u_m,x_m|) =
|w,s_{I_1}(x_1), \ldots,s_{I_m}(x_m)|$ for choices of $w$ and
$s_{J_i}$ given in \cite{milnor3, may} as well as in the proof of
Theorem 11.5 in \cite{may2}. Notice that $s_{I_j}(x_j)$ is in
$A_*(j)$ if and only if $x_j$ is in $A_*(j)$ by Lemma
\ref{lem:simplicial.moment.angle}. Thus there is a commutative
diagram
\[
\begin{CD}
Z(K;(|\underline{X}|_*,|\underline{A}|_*)) @>{h}>>   |Z(K; \underline{X}_*,\underline{A}_*))|\\
@VV{i}V          @VV{|i_*|}V          \\
|X_*(1)|\times \ldots \times |X_*(m)| @>{\eta}>> |X_*(1) \times
\ldots \times X_*(m)|
\end{CD}
\] as stated in Theorem \ref{thm:simplicial.moment.angle.15.march.2007}.

Since (1) $\eta$ is a homeomorphism, (2) $|i_*|$ is a monomorphism,
and (3) $s_{I_j}(x_j)$ is in $A_*(j)$ if and only if $x_j$ is in
$A_*(j)$, it follows that $h$ is a surjection. Since $h$ is a
continuous, bijective, open map, it is a homeomorphism and the
theorem follows.
\end{proof}

\section{Connection to Kurosh's theorem}

The purpose of this section is to point out that certain fibrations
above involving the polyhedral product functor give natural, useful
covering spaces. For example, Ganea's theorem, a special case of the
Denham-Suciu fibration, implies a statement equivalent to a
classical theorem within group theory due to A.~Kurosh
\cite{kurosh}. This section is purely expository development of this
connection.

Kurosh's theorem addresses the structure of a free product of
discrete groups $$\amalg_{1 \leq \alpha \leq n} G_{\alpha}.$$ One
statement of the theorem is as follows. If $G$ is a subgroup of the
free product of discrete groups $A$ and $B$, $A\amalg B$, then $G$
is a free product of the following groups.
\begin{enumerate}
  \item a subgroup of $G$ conjugate in $A\amalg B$ to a subgroup of $A$,
  \item a subgroup of $G$ conjugate in $A\amalg B$ to a subgroup of $B$, and/or
  \item a free group.
\end{enumerate}

The first observation is the following corollary of Kurosh's
theorem.

\begin{prop} \label{P:7.1} Let $G$ be a subgroup of the free product $A\amalg B$.
Then $G$ surjects to a subgroup of $A\times B$ with kernel a free
group.
\end{prop}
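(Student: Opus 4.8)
The plan is to realize the free product $A \amalg B$ topologically and then exploit Ganea's fibration, which the excerpt has identified as a special case of the Denham-Suciu fibration (Lemma \ref{porter}) associated to the simplicial complex $K$ consisting of two disjoint points. Concretely, I would take Eilenberg-MacLane spaces $X = K(A,1)$ and $Y = K(B,1)$, so that $X \vee Y$ is a $K(A \amalg B, 1)$ and $X \times Y$ is a $K(A \times B, 1)$. The inclusion $X \vee Y \hookrightarrow X \times Y$ then induces on $\pi_1$ precisely the canonical surjection $A \amalg B \to A \times B$. By Ganea's theorem, as recalled in the excerpt, the homotopy-theoretic fibre of this inclusion is the join $\Omega X * \Omega Y$.

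Next I would pass to the long exact homotopy sequence of the Ganea fibration $\Omega X * \Omega Y \to X \vee Y \to X \times Y$. Since $A$ and $B$ are discrete, $\Omega X$ and $\Omega Y$ are homotopy equivalent to the discrete spaces $A$ and $B$ respectively; hence the join $\Omega X * \Omega Y \simeq A * B$ is, up to homotopy, a wedge of circles (the join of two discrete sets being a graph, indeed homotopy equivalent to a one-dimensional complex). Thus the fibre $F = \Omega X * \Omega Y$ is homotopy equivalent to a $K(W,1)$ for a free group $W = \pi_1(F)$, and $\pi_i(F) = 0$ for $i \geq 2$. From the exact sequence
$$\pi_1(F) \to \pi_1(X \vee Y) \to \pi_1(X \times Y) \to \pi_0(F)$$
one reads off that $\pi_1(X \vee Y) = A \amalg B$ surjects onto $\pi_1(X \times Y) = A \times B$ with kernel the image of $\pi_1(F) = W$, a free group.

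The final step is to transfer this conclusion to an arbitrary subgroup $G \leq A \amalg B$. I would take the covering space $p\colon \widetilde{E} \to X \vee Y$ corresponding to $G \leq \pi_1(X \vee Y)$, so that $\pi_1(\widetilde{E}) = G$. Pulling back the Ganea fibration along $p$ gives a fibration over $\widetilde{E}$ with the same fibre $F \simeq A * B$ (fibres are preserved under pullback), and the total space of this pullback has fundamental group fitting into an analogous exact sequence; the kernel of the induced map $G \to A \times B$ is a subgroup of the free group $W$, hence itself free by the Nielsen-Schreier theorem. The map $G \to A \times B$ here is exactly the restriction of the canonical surjection, so $G$ maps onto its image (a subgroup of $A \times B$) with free kernel, which is the assertion.

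The main obstacle I anticipate is the bookkeeping in the covering-space step: one must check that restricting the Ganea fibration to the cover corresponding to $G$ genuinely computes the kernel of $G \to A \times B$ as a subgroup of the free group $\pi_1(F)$, rather than merely producing \emph{some} free quotient. The cleanest way to handle this is to observe that the fibre is aspherical and one-dimensional, so the relevant portion of the homotopy exact sequence reduces entirely to $\pi_1$ and $\pi_0$; once that reduction is in place, the identification of $\ker(G \to A \times B)$ with a subgroup of a free group (and hence its freeness via Nielsen-Schreier) is routine. Alternatively, since this section is labelled purely expository, one could invoke Kurosh's theorem directly, as the proposition is explicitly stated as a corollary of it.
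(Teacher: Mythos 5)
Your proposal is correct and takes essentially the same route as the paper: both use Ganea's fibration for $K(A,1)\vee K(B,1)\hookrightarrow K(A,1)\times K(B,1)$ to identify $\ker\bigl(q\colon A\amalg B\to A\times B\bigr)$ with the free fundamental group of the join $\Omega X * \Omega Y \simeq A*B$ (a wedge of circles), and then conclude for a subgroup $G$ because $\ker(G\to A\times B)=G\cap\ker(q)$ sits inside a free group. The only cosmetic difference is that the paper replaces your covering-space pullback with the one-line group-theoretic restriction argument (projecting $G$ to its images $G_A$, $G_B$ and noting the kernel of $G\to G_A\times G_B$ is a subgroup of $\ker(q)$), which is exactly the reduction you yourself identify as the cleanest way.
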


The fibration of Denham-Suciu stated here as Lemma
\ref{denham.suciu.fibrations} provides a simple reformulation of
this corollary by inspection as given next. The homotopy fibre of
the inclusion $$X\bigvee Y \to X \times Y$$ is a generalized
moment-angle complex which has the homotopy type of the join
$$\Omega(X) * \Omega(Y).$$ Furthermore,  the induced map $$W\colon
\Omega(X) * \Omega(Y) \to X\bigvee Y $$ is the Whitehead product
$$[E_X,E_Y]$$ where $E_X: \Sigma \Omega(X) \to X \bigvee Y$ denotes
the composite

$$
\Sigma \Omega(X) \stackrel{e_X}{\to} X \stackrel{i_X}{\to} X \bigvee
Y
$$ with $e_X$ the natural evaluation map and with $i_X$ the natural
inclusion.

Recall that for two pointed topological spaces $U$ and $V$, each of
the homotopy type of a CW-complex, their join $U*V$ has the homotopy
type of the suspension $\Sigma(U \wedge V)$. This is applied next.
Consider the case of $$X = K(\pi,1) \quad \mbox{and} \quad Y =
K(\Gamma,1)$$ for discrete groups $\pi$ and $\Gamma$ to obtain
$$\Omega(X) =  K(\pi,0) \quad \mbox{and} \quad \Omega(Y) = K(\Gamma,0).$$ Thus
$\Omega(X) * \Omega(Y)$ has the homotopy type of $$ \Sigma((\pi
\times \Gamma)/ (\pi \bigvee \Gamma))$$ which is a bouquet of
circles one for each pair $\{(\alpha,\beta) \in \pi \times \Gamma|
\alpha \neq 1, \beta \neq 1\}$. \

The next proposition follows easily.
\begin{prop}
\label{P:7.2} The kernel of the natural projection map $$q: \pi
\amalg \Gamma \to \pi \times \Gamma$$ is the fundamental group of
$\Sigma((\pi \times \Gamma)/ (\pi \bigvee \Gamma))$, a free group.
Furthermore, the Whitehead product map $W\colon \Omega(X) *
\Omega(Y) \to X\bigvee Y $ induces the map from $\pi_1(\Sigma((\pi
\times \Gamma)/ (\pi \bigvee \Gamma)) \to \pi\amalg \Gamma$ which is
an isomorphism onto the kernel of the map $q: \pi \amalg \Gamma \to
\pi \times \Gamma$.

\end{prop}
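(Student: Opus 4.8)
The plan is to read everything off the Ganea fibration already set up just before the statement, together with the long exact homotopy sequence. Writing $X = K(\pi,1)$ and $Y = K(\Gamma,1)$, the homotopy fibre of the inclusion $X \bigvee Y \to X \times Y$ is $\Omega(X)*\Omega(Y)$, and the fibre inclusion is precisely the Whitehead product map $W$. First I would identify this fibre explicitly: since $\Omega(X) = K(\pi,0)$ and $\Omega(Y) = K(\Gamma,0)$ are discrete, $\Omega(X)*\Omega(Y) \simeq \pi * \Gamma \simeq \Sigma(\pi \wedge \Gamma) = \Sigma((\pi \times \Gamma)/(\pi \bigvee \Gamma))$, which is the bouquet of circles indexed by the pairs $(\alpha,\beta)$ with $\alpha \neq 1$, $\beta \neq 1$ observed above. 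In particular this fibre is path-connected and its fundamental group is free on that indexing set.

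Next I would write out the segment of the long exact homotopy sequence of the fibration $\Omega(X)*\Omega(Y) \to X \bigvee Y \to X \times Y$ around dimension one:
$$\pi_2(X \times Y) \to \pi_1(\Omega(X)*\Omega(Y)) \xrightarrow{W_*} \pi_1(X \bigvee Y) \xrightarrow{q} \pi_1(X \times Y) \to \pi_0(\Omega(X)*\Omega(Y)).$$
Here $\pi_1(X \bigvee Y) = \pi \amalg \Gamma$ by van Kampen, $\pi_1(X \times Y) = \pi \times \Gamma$, and the induced map is exactly the natural projection $q$. The two end terms vanish: $\pi_2(X \times Y) = \pi_2(K(\pi,1)) \times \pi_2(K(\Gamma,1)) = 0$ because the factors are aspherical, and $\pi_0(\Omega(X)*\Omega(Y)) = *$ by the connectivity just noted. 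Hence the sequence collapses to a short exact sequence
$$0 \to \pi_1(\Sigma((\pi \times \Gamma)/(\pi \bigvee \Gamma))) \xrightarrow{W_*} \pi \amalg \Gamma \xrightarrow{q} \pi \times \Gamma \to 0.$$

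From this both assertions follow at once: $W_*$ is injective with image exactly $\ker(q)$, so $W$ induces an isomorphism from $\pi_1(\Sigma((\pi \times \Gamma)/(\pi \bigvee \Gamma)))$ onto $\ker(q)$, and since the source is free (being the fundamental group of a wedge of circles) the kernel is free. The only points needing care are the bookkeeping identifications --- that the fibre-inclusion map appearing in the long exact sequence is the Whitehead product $W$ (already recorded before the statement) and that the map induced on $\pi_1$ by the inclusion is the projection $q$ --- together with the vanishing of $\pi_2(X \times Y)$, which is where asphericity of $K(\pi,1)$ and $K(\Gamma,1)$ is essential. I expect the main (mild) obstacle to be confirming that the relevant portion of the long exact sequence genuinely degenerates into a short exact sequence, i.e. checking that both end terms vanish; beyond this no computation is required.
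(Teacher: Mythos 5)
Your proof is correct and is exactly the argument the paper intends: the paper derives Proposition \ref{P:7.2} from the Ganea fibration $\Omega(X)*\Omega(Y)\to X\bigvee Y\to X\times Y$ with fibre inclusion the Whitehead product and fibre identified as the bouquet of circles $\Sigma((\pi\times\Gamma)/(\pi\bigvee\Gamma))$, asserting that the result ``follows easily.'' Your write-up simply makes explicit the long exact homotopy sequence step (vanishing of $\pi_2(X\times Y)$ by asphericity and of $\pi_0$ of the fibre by connectivity of the join) that the paper leaves to the reader.
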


The proof of Proposition \ref{P:7.1} is given next. Let $G$ be a
subgroup of $A\amalg B$. Project to $A$ and to $B$, then let the
images of $G$ be $G_A$ and $G_B$ respectively. Define the map
$$j_A:G \to A\amalg B$$ as the composite
$$
G \to G_A \stackrel{i_A}{\to} A \stackrel{\gamma_A}{\to} A \amalg B
$$
\noindent where $i_A$ and $\gamma_A$ are natural inclusions. Define
$$j_B:G \to A\amalg B$$ similarly. Thus the kernel of the natural map $$j_A \times j_B: G \to G_A\times
G_B$$ is a subgroup of the kernel of
$${q:A\amalg B \to A\times B},$$ a free group.

A standard consequence is recorded next.
\begin{cor}\label{C:7.3} Let
$$f: G_1 \amalg \cdots \amalg G_n  \to H$$ be a group homomorphism
which when restricted to each $G_i$ is a monomorphism. Then the
kernel of $f$ is free.
\end{cor}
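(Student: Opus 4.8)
The plan is to realize $N = \ker f$ as a subgroup of the free product $G_1 \amalg \cdots \amalg G_n$ and to apply the Kurosh subgroup theorem, in its evident $n$-fold form, to read off the structure of $N$. Recall from the beginning of this section that every subgroup of a free product of discrete groups decomposes as a free product whose factors are either (i) subgroups conjugate into one of the $G_i$, or (ii) a single free group. Thus it suffices to prove that, for $N=\ker f$, all factors of type (i) are trivial, so that only the free factor of type (ii) survives.

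The key step --- and the only place the hypothesis enters --- is the observation that $N$ meets every conjugate of every $G_i$ trivially. Indeed, suppose $x = g\,c\,g^{-1}$ lies in $N$ with $c \in G_i$ and $g \in G_1 \amalg \cdots \amalg G_n$. Then $1 = f(x) = f(g)\,f(c)\,f(g)^{-1}$, so $f(c)=1$; since $f$ restricted to $G_i$ is a monomorphism, $c=1$ and hence $x=1$. Therefore $N \cap g G_i g^{-1} = \{1\}$ for every $i$ and every $g$. Feeding this into the Kurosh decomposition of $N$, each factor of type (i) is a subgroup of some $g G_i g^{-1}$, hence is contained in $N \cap g G_i g^{-1} = \{1\}$ and is trivial. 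Only the free factor remains, so $N = \ker f$ is free; the appeal to Nielsen--Schreier is implicit, as the surviving factor is already free.

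I expect the main obstacle to be organizational rather than conceptual: one must invoke the Kurosh subgroup theorem for $n$ factors rather than the two-factor statement recalled in the text and used in Proposition \ref{P:7.1}, because the naive induction on $n$ fails. Grouping $G_1 \amalg \cdots \amalg G_n$ as $G_1 \amalg (G_2 \amalg \cdots \amalg G_n)$ does not preserve the hypothesis, since $f$ need not be injective on the composite factor $G_2 \amalg \cdots \amalg G_n$; and Proposition \ref{P:7.1} only asserts freeness of the kernel of the specific projection $q$ to the direct product, not of an arbitrary kernel. So the trivial-intersection observation must be combined with the full subgroup theorem, not merely with the propositions already proven.

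As an alternative in the homotopy-theoretic spirit of the rest of the paper, one may realize $N$ as the fundamental group of the cover of $\bigvee_{i=1}^{n} K(G_i,1)$ classified by $f$. Injectivity of $f$ on each $G_i$ forces every component of the preimage of each $K(G_i,1)$ to be the contractible universal cover $\widetilde{K(G_i,1)}$; the total space is then built from contractible pieces glued along the discrete fibre over the wedge point, hence deformation retracts onto a graph, and $\pi_1 = N$ is free.
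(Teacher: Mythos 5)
Your proof is correct, and it is precisely the argument the paper leaves implicit: Corollary \ref{C:7.3} appears there with no proof at all, recorded as ``a standard consequence'' of the Kurosh subgroup theorem stated at the opening of the section, and your key step---that $\ker f$ meets every conjugate $g\,G_i\,g^{-1}$ trivially because $f(c)=1$ forces $c=1$, so only the free factor in the Kurosh decomposition survives---is exactly that standard argument. Your topological alternative via the cover of $\bigvee_i K(G_i,1)$ is likewise sound and matches the spirit of Proposition \ref{P:7.2}, but no comparison beyond this is needed since the paper supplies no proof of its own.
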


\section{Acknowledgements} \label{sec:Acknowlegements}
The authors would like to thank the Institute for Advanced Study for
providing a wonderful environment to develop this mathematics. The
first author was partially supported by the award of a research
leave from Rider University. The third author was partially
supported by NSF grant number 0340575 and DARPA grant number
2006-06918-01. The fourth author was partially supported by the
Department of Mathematics at Princeton University.

\bibliographystyle{amsalpha}

\end{document}